\newtheorem{theorem}{Theorem}[section]
\newtheorem{proposition}[theorem]{Proposition}
\newtheorem{lemma}[theorem]{Lemma}
\newtheorem{corollary}[theorem]{Corollary}
\theoremstyle{definition}
\theoremstyle{remark}
\newtheorem{remark}[theorem]{Remark}
\numberwithin{equation}{section}
\begin{document}

\title{On some arithmetic properties of Siegel functions (II)}

\author{Ho Yun Jung}
\address{Department of Mathematical Sciences, KAIST}
\curraddr{Daejeon 373-1, Korea} \email{DOSAL@kaist.ac.kr}
\thanks{}

\author{Ja Kyung Koo}
\address{Department of Mathematical Sciences, KAIST}
\curraddr{Daejeon 373-1, Korea}
\email{jkkoo@math.kaist.ac.kr}
\thanks{}

\author{Dong Hwa Shin}
\address{Department of Mathematical Sciences, KAIST}
\curraddr{Daejeon 373-1, Korea}
\email{shakur01@kaist.ac.kr}
\thanks{}

\subjclass[2000]{11F11, 11F20, 11R37, 11Y40}

\keywords{class fields, modular forms and functions, normal bases,
normal $p$-integral bases, Siegel functions.
\newline This research was supported by Basic Science Research Program through the National Research Foundation of Korea
funded by the Ministry of Education, Science and Technology
(2009-0063182).}

\maketitle

\begin{abstract}
Let $K$ be an imaginary quadratic field with discriminant
$d_K\leq-7$. We deal with problems of constructing normal bases
between abelian extensions of $K$ by making use of singular values
of Siegel functions. First, we show that a criterion achieved from
the Frobenius determinant relation enables us to find normal bases
of ring class fields of orders of bounded conductors depending on
$d_K$ over $K$. Next, denoting by $K_{(N)}$ the ray class field
modulo $N$ of $K$ for an integer $N\geq2$ we consider the field
extension $K_{(p^2m)}/K_{(pm)}$ for a prime $p\geq5$ and an integer
$m\geq1$ relatively prime to $p$ and then find normal bases of all
intermediate fields over $K_{(pm)}$ by utilizing Kawamoto's
arguments (\cite{Kawamoto}). And, we further investigate certain
Galois module structure of the field extension
$K_{(p^{n}m)}/K_{(p^{\ell}m)}$ with $n\geq 2\ell$, which would be an
extension of Komatsu's work (\cite{Komatsu}).
\end{abstract}

\maketitle

\section{Introduction}

Let $F$ be a finite Galois extension of a field $L$. Then there
exists a normal basis of $F$ over $L$, namely a basis of the form
$\big\{x^\gamma:\gamma\in\mathrm{Gal}(F/L)\big\}$ for a single
element $x\in F$ by the normal basis theorem (\cite{Waerden}). After
Okada (\cite{Okada}) had constructed normal bases of the ray class
fields over the Gaussian field $\mathbb{Q}(\sqrt{-1})$, several
other people treated the problem of generating normal bases of
abelian extensions of other imaginary quadratic fields by special
values of elliptic functions or elliptic modular functions
(\cite{Chan}, \cite{Komatsu}, \cite{Schertz}, \cite{Taylor}). And,
Jung-Koo-Shin (\cite{J-K-S}) recently found normal bases of ray
class fields over any imaginary quadratic field with discriminant
$\leq-7$ by utilizing Siegel functions.
\par
 Let $K$ be an imaginary quadratic field and $H_\mathcal{O}$ be
the ring class field of the order $\mathcal{O}$ of conductor
$N\geq2$ in $K$. In number theory, ring class fields over imaginary
quadratic fields play an important role in the study of certain
quadratic Diophantine equations. For example, let $n$ be a positive
integer and $H_\mathcal{O}$ be the ring class field of the order
$\mathcal{O}=\mathbb{Z}[\sqrt{-n}]$ in $K=\mathbb{Q}(\sqrt{-n})$. If
$p$ is an odd prime not dividing $n$, then we have the following
assertions:
\begin{eqnarray*}
&&p=x^2+ny^2~\textrm{is solvable for some integers $x$ and
$y$}\\
&\Longleftrightarrow&\textrm{$p$ splits completely in
$H_\mathcal{O}$}\\
&\Longleftrightarrow&\left\{\begin{array}{ll}\textrm{the Legendre symbol}~\big(\tfrac{-n}{p}\big)=1~\textrm{and}\\
f_n(X)\equiv0\pmod{p}~\textrm{has an integer solution}
\end{array}\right.
\end{eqnarray*}
where $f_n(X)$ is the minimal polynomial of a real algebraic integer
$\alpha$ for which $H_\mathcal{O}=K(\alpha)$ (\cite{Cox}). It is a
classical result by the main theorem of complex multiplication that
for any proper fractional $\mathcal{O}$-ideal $\mathfrak{a}$, the
$j$-invariant $j(\mathfrak{a})$ is an algebraic integer and
generates $H_\mathcal{O}$ over $K$ (\cite{Lang} or \cite{Shimura}).
Unlike the classical case, however, Chen-Yui (\cite{C-Y})
constructed a generator of the ring class field of certain conductor
in terms of the singular value of the Thompson series which is a
Hauptmodul for $\Gamma_0(N)$ or $\Gamma_0(N)^\dag$. Here,
$\Gamma_0(N)=\big\{\gamma\in\mathrm{SL}_2(\mathbb{Z}):\gamma\equiv\left(\begin{smallmatrix}*&*\\0&*\end{smallmatrix}
\right)\pmod{N}\big\}$ and $\Gamma_0^\dag(N)$ is the subgroup of
$\mathrm{SL}_2(\mathbb{R})$ generated by $\Gamma_0(N)$ and
$\left(\begin{smallmatrix}0&-1/{\sqrt{N}}\\\sqrt{N}&0\end{smallmatrix}\right)$.
Similarly, Cox-Mckay-Stevenhagen (\cite{C-M-S}) showed that certain
singular value of a Hauptmodul for $\Gamma_0(N)$ or
$\Gamma_0(N)^\dag$ with rational Fourier coefficients generates
$H_\mathcal{O}$ over $K$. Furthermore, Cho-Koo (\cite{C-K}) recently
revisited and extended these results by using the theory of
Shimura's canonical models and his reciprocity law. On the other
hand, as we see in the above example, it is essential to find the
minimal polynomial of $j(\mathcal{O})$ over $K$, namely, the
\textit{class equation} of $\mathcal{O}$ in order to solve such
quadratic equations. Although there are several known algorithms for
finding the class equations (\cite{C-Y}, \cite{Cox}, \cite{K-Y},
\cite{Morain}), we would like to adopt the idea of Gee and
Stevenhagen (\cite{Gee}, \cite{G-S} or \cite{Stevenhagen}) because
we could not claim with the formers that the conjugates of
$j(\mathcal{O})$ form a normal basis of $H_\mathcal{O}$ over $K$.
\par
In this paper we shall first construct a ring class invariant of
$H_\mathcal{O}$ under the condition
\begin{eqnarray}\label{cumbersome}
d_K\leq-43\quad\textrm{and}\quad 2\leq
N\leq\frac{-\sqrt{3}\pi}{\ln\big(1-2.16e^{-\frac{\pi\sqrt{-d_K}}{24}}\big)}
\end{eqnarray}
in terms of singular values of Siegel functions and also
systematically find its minimal polynomial (Theorems \ref{main},
\ref{conjugate} and Remark \ref{example}). And, through a criterion
developed in \cite{J-K-S} we can show that the conjugates of the
ring class invariant form a normal basis of $H_\mathcal{O}$ over $K$
(Theorem \ref{normal1}). In Section \ref{section5}, however, we will
show without assuming (\ref{cumbersome}) that certain quotient of
singular values of the $\Delta$-function becomes a ring class
invariant, when the conductor of the extension $H_\mathcal{O}/K$ is
a prime power (Theorem \ref{maindelta}).
\par
Next, we shall consider in Section \ref{section6} the extension
$K_{(p^2m)}/K_{(pm)}$ for a prime $p\geq5$ and an integer $m\geq1$
relatively prime to $p$ and, by means of Kawamoto's arguments
(\cite{Kawamoto}), construct a normal basis of $F$ over $K_{(pm)}$
for each intermediate field $F$ via singular values of Siegel
functions as algebraic integers (Theorems \ref{main1} and
\ref{main2}). And, we shall further discuss in Section
\ref{section7} certain Galois module structure of the ring of
$p$-integers of $K_{(p^nm)}$ over that of $K_{(p^\ell m)}$ where $n$
and $\ell$ are positive integers with $n\geq2\ell$, which is
motivated by a relation between the existence of normal basis in
$\mathbb{Z}_p$-extension and the Greenberg's conjecture (\cite{F-N},
\cite{F-K}).

\section{Field of modular functions}

In this section we briefly review some necessary arithmetic
properties of Siegel functions as modular functions.
\par
For a positive integer $N$, let $\zeta_N=e^\frac{2\pi i}{N}$ and
$\mathcal{F}_N$ be the field of modular functions of level $N$ which
are defined over $\mathbb{Q}(\zeta_N)$. Then $\mathcal{F}_N$ is a
Galois extension of $\mathcal{F}_1=\mathbb{Q}\big(j(\tau)\big)$
($j$=the elliptic modular function) whose Galois group is isomorphic
to
$\mathrm{GL}_2(\mathbb{Z}/N\mathbb{Z})/\big\{\pm\left(\begin{smallmatrix}1&0\\0&1\end{smallmatrix}\right)\big\}$.
In order to describe the Galois action on the field $\mathcal{F}_N$
we consider the decomposition of the group
\begin{equation*}
\mathrm{GL}_2(\mathbb{Z}/N\mathbb{Z})\big/\bigg\{\pm
\begin{pmatrix}1&0\\0&1\end{pmatrix}
\bigg\}=\bigg\{\begin{pmatrix}1&0\\0&d\end{pmatrix}
~:~d\in(\mathbb{Z}/N\mathbb{Z})^*\bigg\}\cdot
\mathrm{SL}_2(\mathbb{Z}/N\mathbb{Z})\big/\bigg\{\pm
\begin{pmatrix}1&0\\0&1\end{pmatrix}\bigg\}.
\end{equation*}
Here, the matrix
$\left(\begin{smallmatrix}1&0\\0&d\end{smallmatrix}\right)$ acts on
$\sum_{n=-\infty}^\infty c_n e^\frac{2\pi
in\tau}{N}\in\mathcal{F}_N$ by
\begin{equation}\label{first}
\sum_{n=-\infty}^\infty c_ne^\frac{2\pi in\tau}{N}\mapsto
\sum_{n=-\infty}^\infty c_n^{\sigma_d}e^\frac{2\pi in\tau}{N}
\end{equation}
where $\sigma_d$ is the automorphism of $\mathbb{Q}(\zeta_N)$
induced by $\zeta_N\mapsto\zeta_N^d$. And, for an element
$\gamma\in\mathrm{SL}_2(\mathbb{Z}/N\mathbb{Z})/\big\{\pm
\left(\begin{smallmatrix}1&0\\0&1\end{smallmatrix}\right)\big\}$ let
$\gamma'\in\mathrm{SL}_2(\mathbb{Z})$ be a preimage of $\gamma$ via
the natural surjection
$\mathrm{SL}_2(\mathbb{Z})\rightarrow\mathrm{SL}_2(\mathbb{Z}/N\mathbb{Z})/\big\{\pm\left(\begin{smallmatrix}1&0\\0&1\end{smallmatrix}
\right)\big\}$. Then $\gamma$ acts on $h\in\mathcal{F}_N$ by
composition
\begin{equation}\label{second}
h\mapsto h\circ\gamma'
\end{equation}
as linear fractional transformation (\cite{Lang} or \cite{Shimura}).
\par
For any pair $(r_1,~r_2)\in\mathbb{Q}^2\setminus\mathbb{Z}^2$ we
define a \textit{Siegel function} $g_{(r_1,~r_2)}(\tau)$ on
$\mathfrak{H}$ (=the complex upper half plane) by the following
Fourier expansion
\begin{eqnarray}\label{FourierSiegel}
g_{(r_1,~r_2)}(\tau)=-q_\tau^{\frac{1}{2}\mathbf{B}_2(r_1)}e^{\pi
ir_2(r_1-1)}(1-q_z)\prod_{n=1}^{\infty}(1-q_\tau^nq_z)(1-q_\tau^nq_z^{-1})
\end{eqnarray}
where $\mathbf{B}_2(X)=X^2-X+\frac{1}{6}$ is the second Bernoulli
polynomial, $q_\tau=e^{2\pi i\tau}$ and $q_z=e^{2\pi iz}$ with
$z=r_1\tau+r_2$. Then it is a modular unit which has no zeros and
poles on $\mathfrak{H}$ (\cite{K-L}). For later use we introduce
some arithmetic properties and a modularity condition of Siegel
functions:

\begin{proposition}\label{transformation}
Let $r=(r_1,~r_2)\in\mathbb{Q}^2\setminus\mathbb{Z}^2$. Then
\begin{itemize}
\item[(i)] $g_r(\tau)$ is integral over
$\mathbb{Z}[j(\tau)]$.
\item[(ii)] Let $N$ be the smallest positive integer with
$Nr\in\mathbb{Z}^2$. If $N$ has at least two prime factors, then
$1/g_r(\tau)$ is integral over $\mathbb{Z}[j(\tau)]$. If $N=p^s$ is
a prime power, then $1/g_r(\tau)$ is integral over
$\mathbb{Z}[\frac{1}{p}][j(\tau)]$.
\item[(iii)] For $\gamma\in\mathrm{SL}_2(\mathbb{Z})$ we get
\begin{equation*}
g_{r}^{12}(\tau)\circ\gamma=g_{r\gamma}^{12}(\tau).
\end{equation*}
\item[(iv)] For $s=(s_1,~s_2)\in\mathbb{Z}^2$ we have
\begin{equation*}
g_{r+s}(\tau)=(-1)^{s_1s_2+s_1+s_2}e^{-\pi
i(s_1r_2-s_2r_1)}g_{r}(\tau).
\end{equation*}
\end{itemize}
\end{proposition}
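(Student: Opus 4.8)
The plan is to establish each of the four assertions (i)--(iv) essentially from the Fourier expansion (\ref{FourierSiegel}) together with standard facts about the Klein forms and the modular discriminant $\Delta$, following Kubert--Lang \cite{K-L}. First I would recall the relation between the Siegel function and the Klein form $\mathfrak{k}_r(\tau)$, namely $g_r(\tau)=\mathfrak{k}_r(\tau)\,\eta(\tau)^2$ up to a root of unity, where $\eta$ is the Dedekind eta function, so that $g_r^{12}$ is, up to normalization, a ratio of Klein forms and powers of $\Delta=\eta^{24}$. This identification immediately packages the modular transformation behavior.

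For (iii), I would use the homogeneity and weight-$(-1)$ transformation law of the Klein form under $\mathrm{SL}_2(\mathbb{Z})$: $\mathfrak{k}_r(\gamma\tau)=(c\tau+d)^{-1}\varepsilon(\gamma)\,\mathfrak{k}_{r\gamma}(\tau)$ for a root of unity $\varepsilon(\gamma)$; raising to the $12$th power and combining with $\Delta(\gamma\tau)=(c\tau+d)^{12}\Delta(\tau)$ kills both the automorphy factor and the root of unity, giving $g_r^{12}\circ\gamma=g_{r\gamma}^{12}$. For (iv), the quasi-periodicity $\mathfrak{k}_{r+s}(\tau)=(-1)^{s_1 s_2+s_1+s_2}e^{-\pi i(s_1 r_2-s_2 r_1)}\mathfrak{k}_r(\tau)$ for $s\in\mathbb{Z}^2$ is a direct computation from the product expansion (tracking the prefactor $-q_\tau^{\mathbf{B}_2(r_1)/2}e^{\pi i r_2(r_1-1)}$ and the shift of the $q_z$-factors), and since the eta factor is independent of $r$ it carries over verbatim to $g_{r+s}$.

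For the integrality statements (i) and (ii), the key input is that $g_r^{12}$ (or a suitable power) lies in $\mathcal{F}_N$ and that its conjugates over $\mathbb{Q}(j(\tau))$ are again of the form $g_{r'}^{12}$ by (iii) and (\ref{first}); hence $g_r$ satisfies a monic polynomial over $\mathbb{Z}[j(\tau)]$ once one checks that the relevant symmetric functions are holomorphic at every cusp with integral $q$-expansion coefficients. Holomorphicity at $\infty$ is visible from (\ref{FourierSiegel}) since $\mathbf{B}_2(r_1)\geq \mathbf{B}_2(1/2)=-1/12>-\infty$ and the leading exponent is bounded below; at the other cusps one transforms by $\mathrm{SL}_2(\mathbb{Z})$ and invokes (iii) to reduce to the same shape. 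For the reciprocal $1/g_r$ in (ii), the point is that the order of vanishing of $g_r$ at each cusp is controlled by the Bernoulli term, and the product formula $\prod g_{r}$ over a suitable orbit relates to a power of $\Delta$ divided by an integer supported on the primes dividing $N$; when $N$ has two distinct prime factors this integer is a unit, whereas for $N=p^s$ one must invert $p$. The main obstacle is this last bookkeeping: proving that $1/g_r$ is integral over $\mathbb{Z}[j]$ (resp. $\mathbb{Z}[1/p][j]$) requires the distribution relations for Siegel functions and a careful cusp-by-cusp analysis of orders of vanishing, rather than anything formal — but this is precisely the content of the cited results in \cite{K-L}, so I would quote them and restrict the written proof to indicating how (iii), (iv) and the $q$-expansion (\ref{FourierSiegel}) assemble into the stated claims.
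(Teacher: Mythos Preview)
Your sketch is essentially correct, and in fact it goes well beyond what the paper itself provides: the paper does not prove this proposition at all but simply refers the reader to \cite{K-S}, Section~3 and Proposition~2.4. The arguments you outline for (iii) and (iv) via the Klein form $\mathfrak{k}_r$ and its transformation and quasi-periodicity laws are the standard ones from Kubert--Lang, and your strategy for (i) and (ii)---reducing to a cusp-by-cusp $q$-expansion analysis of the Galois conjugates $g_{r'}$, together with the distribution relations to control the denominator in (ii)---is exactly how these integrality statements are established in \cite{K-L}. Since you already plan to cite \cite{K-L} for the delicate bookkeeping in (ii), there is no genuine gap; you are simply unpacking the content of the citation rather than leaving it as a black box.

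One small correction: the relation $g_r(\tau)=\mathfrak{k}_r(\tau)\,\eta(\tau)^2$ holds on the nose (not merely up to a root of unity), which slightly simplifies your reduction for (iii) and (iv), though of course raising to the $12$th power absorbs any such ambiguity anyway.
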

\begin{proof}
See \cite{K-S} Section 3 and Proposition 2.4.
\end{proof}

\begin{proposition}\label{modularity}
Let $N\geq2$. Let
$\big\{m(r)\big\}_{r\in\frac{1}{N}\mathbb{Z}^2\setminus\mathbb{Z}^2}$
be a family of integers such that $m(r)=0$ except finitely many $r$.
Then a product of Siegel functions
\begin{equation*}
\prod_{r\in\frac{1}{N}\mathbb{Z}^2\setminus\mathbb{Z}^2}g_r^{m(r)}(\tau)
\end{equation*}
belongs to $\mathcal{F}_N$, if $\big\{m(r)\big\}$ satisfies
\begin{eqnarray*}
&&\textstyle\sum_r m(r)(Nr_1)^2\equiv\textstyle\sum_r
m(r)(Nr_2)^2\equiv0\pmod{\gcd(2,~N)\cdot N}\\
&&\textstyle\sum_r m(r)(Nr_1)(Nr_2)\equiv0\pmod{N}\\
&&\gcd(12,~N)\cdot\textstyle\sum_rm(r)\equiv0\pmod{12}.
\end{eqnarray*}
\end{proposition}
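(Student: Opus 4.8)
The plan is to verify that $f:=\prod_r g_r^{m(r)}$ meets the standard criterion for membership in $\mathcal{F}_N$: a function on $\mathfrak{H}$ lies in $\mathcal{F}_N$ exactly when it is meromorphic on $\mathfrak{H}$ and at the cusps, invariant under $\Gamma(N)=\{\gamma\in\mathrm{SL}_2(\mathbb{Z}):\gamma\equiv I\ (\mathrm{mod}\ N)\}$, and has a Fourier expansion in $q_\tau^{1/N}=e^{2\pi i\tau/N}$ with all coefficients in $\mathbb{Q}(\zeta_N)$ (\cite{Lang}, \cite{Shimura}). By \eqref{FourierSiegel} each $g_r$ is holomorphic and nonvanishing on $\mathfrak{H}$ and meromorphic at the cusps, hence so is the finite product $f$, so the meromorphy conditions are automatic; it remains to check (A) $\Gamma(N)$-invariance of $f$, and (B) that its $q_\tau^{1/N}$-expansion has coefficients in $\mathbb{Q}(\zeta_N)$.

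For (A) I would use the transformation law of a single Siegel function under $\mathrm{SL}_2(\mathbb{Z})$, a refinement of Proposition \ref{transformation}(iii): writing $g_r$ as a constant multiple of $\mathfrak{k}_r\,\eta^2$ with $\mathfrak{k}_r$ the Klein form, the automorphy factor of $\mathfrak{k}_r$ cancels that of $\eta^2$ and one gets $g_r\circ\gamma=\varepsilon(\gamma)\,g_{r\gamma}$ for $\gamma\in\mathrm{SL}_2(\mathbb{Z})$, with $\varepsilon(\gamma)$ the square of the Dedekind $\eta$-multiplier (a $12$-th root of unity). Fix $\gamma\in\Gamma(N)$ and write $\gamma=I+NM$ with $M$ integral; since $Nr\in\mathbb{Z}^2$ one has $r\gamma-r=(Nr)M\in\mathbb{Z}^2$, so Proposition \ref{transformation}(iv) gives $g_{r\gamma}=u(r,\gamma)\,g_r$ for an explicit root of unity $u(r,\gamma)=(-1)^{s_1s_2+s_1+s_2}e^{-\pi i(s_1r_2-s_2r_1)}$ with $s=(s_1,s_2)=(Nr)M$, and therefore
\[
f\circ\gamma=\Big(\varepsilon(\gamma)^{\sum_r m(r)}\prod_r u(r,\gamma)^{m(r)}\Big)f .
\]
The task is to show the bracketed root of unity equals $1$ for every $\gamma\in\Gamma(N)$. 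The factor $\varepsilon(\gamma)^{\sum_r m(r)}$ is controlled by the classical fact that the $\eta^2$-multiplier restricted to $\Gamma(N)$ takes values in the group of $(12/\gcd(12,N))$-th roots of unity (equivalently, the image of $\Gamma(N)$ in $\mathrm{SL}_2(\mathbb{Z})^{\mathrm{ab}}\cong\mathbb{Z}/12\mathbb{Z}$ is contained in $\gcd(12,N)\mathbb{Z}/12\mathbb{Z}$), combined with the observation that hypothesis (3) says precisely $\tfrac{12}{\gcd(12,N)}\mid\sum_r m(r)$. For the remaining factor one substitutes $s=(Nr)M$: the exponent $\sum_r m(r)\big(s_1s_2+s_1+s_2-s_1r_2+s_2r_1\big)$ unwinds, after clearing the evident denominator $N$ by means of (1) and (2), into a $\mathbb{Z}$-linear combination of $\sum_r m(r)(Nr_1)^2$, $\sum_r m(r)(Nr_2)^2$, $\sum_r m(r)(Nr_1)(Nr_2)$ and $\sum_r m(r)(Nr_1)$, $\sum_r m(r)(Nr_2)$, the last two being congruent mod $2$ to the first two; hypotheses (1) and (2), read modulo $\gcd(2,N)\cdot N$ and modulo $N$, together with the identity $\mathrm{tr}(M)=-N\det M$ forced by $\det(I+NM)=1$ (which constrains the parities of the entries of $M$), make this combination an even integer, so $\prod_r u(r,\gamma)^{m(r)}=1$. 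Since the formula for $s$ is uniform in $\gamma$, this disposes of all of $\Gamma(N)$ at once.

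For (B) I would reduce, using Proposition \ref{transformation}(iv), each $r$ modulo $\mathbb{Z}^2$ to a representative with $0\le r_1<1$ and read off \eqref{FourierSiegel}: then $q_z=\zeta_N^{Nr_2}q_\tau^{r_1}$, so each factor $(1-q_z)$, $(1-q_\tau^nq_z)$, $(1-q_\tau^nq_z^{-1})$ contributes coefficients in $\mathbb{Z}[\zeta_N]$ and powers of $q_\tau$ in $\tfrac1N\mathbb{Z}$. Everything then comes down to the leading monomial, i.e.\ to showing $\tfrac12\sum_r m(r)\mathbf{B}_2(r_1)\in\tfrac1N\mathbb{Z}$ and that $\prod_r\big(-e^{\pi ir_2(r_1-1)}\big)^{m(r)}$ is a root of unity lying in $\mathbb{Q}(\zeta_N)$; with $\mathbf{B}_2(r_1)=r_1^2-r_1+\tfrac16$ and $r_2(r_1-1)=r_1r_2-r_2$ these statements unwind into the same congruence bookkeeping as in (A), now using all of (1)--(3). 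This produces the required $q_\tau^{1/N}$-expansion over $\mathbb{Q}(\zeta_N)$ and completes the proof.

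The hard part is this bookkeeping of roots of unity and fractional $q$-powers, concentrated in step (A) and reused in (B): one must pin down the order of the $\eta$-multiplier on $\Gamma(N)$ and then verify that the precise $2$-adic and $N$-adic strengths built into the congruences --- the factors $\gcd(2,N)$ in (1) and $\gcd(12,N)$ in (3) --- are exactly what is needed to annihilate the sign $(-1)^{s_1s_2+s_1+s_2}$ coming from Proposition \ref{transformation}(iv) together with the $\eta^2$-multiplier $\varepsilon(\gamma)$. The nonobvious ingredient that makes the case of odd $N$ work is the relation $\mathrm{tr}(M)=-N\det M$ for $\gamma=I+NM\in\mathrm{SL}_2(\mathbb{Z})$; once this calibration is in hand, the rest is a direct reading of \eqref{FourierSiegel}.
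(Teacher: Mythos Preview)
The paper does not prove this proposition itself; it simply cites Kubert--Lang \cite{K-L}, Chapter 3, Theorems 5.2 and 5.3. Your outline --- factoring $g_r$ as a Klein form times $\eta^2$ so that $g_r\circ\gamma=\varepsilon(\gamma)\,g_{r\gamma}$ with $\varepsilon(\gamma)$ the $\eta^2$-multiplier, then for $\gamma=I+NM\in\Gamma(N)$ invoking Proposition~\ref{transformation}(iv) and reducing the resulting root of unity to the congruences (with (3) controlling $\varepsilon(\gamma)^{\sum m(r)}$ and (1)--(2) together with $\mathrm{tr}(M)=-N\det M$ controlling the Klein translation factors), followed by the analogous bookkeeping for the $q_\tau^{1/N}$-expansion --- is exactly the argument carried out in that reference, so you have reconstructed the proof the paper defers to.
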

\begin{proof}
See \cite{K-L} Chapter 3 Theorems 5.2 and 5.3.
\end{proof}

\begin{corollary}\label{F_N}
Let $N\geq 2$. For $r=(r_1,~r_2)\in\frac{1}{N}\mathbb{Z}^2\setminus
\mathbb{Z}^2$ the function $g_{r}^\frac{12N}{\gcd(6,~N)}(\tau)$
satisfies
\begin{equation*}
g_{(r_1,~r_2)}^\frac{12N}{\gcd(6,~N)}(\tau)=g_{(-r_1,~-r_2)}^\frac{12N}{\gcd(6,~N)}(\tau)=g_{(\langle
r_1\rangle,~\langle r_2\rangle)}^\frac{12N}{\gcd(6,~N)}(\tau)
\end{equation*}
where $\langle X\rangle$ is the fractional part of $X\in\mathbb{R}$
so that $0\leq \langle X\rangle<1$. It belongs to $\mathcal{F}_N$.
And, $\gamma$ in $\mathrm{GL}_2(\mathbb{Z}/N\mathbb{Z})/\big\{
\pm\left(\begin{smallmatrix}1&0\\0&1\end{smallmatrix}\right)\big\}\cong\mathrm{Gal}(\mathcal{F}_N/
\mathcal{F}_1)$ acts on the function by
\begin{equation*}
\bigg(g_{r}^\frac{12N}{\gcd(6,~N)}(\tau)\bigg)^\gamma=
g_{r\gamma}^\frac{12N}{\gcd(6,~N)}(\tau).
\end{equation*}
\end{corollary}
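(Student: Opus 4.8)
The plan is to derive Corollary \ref{F_N} as a direct consequence of Propositions \ref{transformation} and \ref{modularity}. First I would establish the two equalities displayed in the statement. The equality $g_{(r_1,r_2)}^{12N/\gcd(6,N)}(\tau)=g_{(\langle r_1\rangle,\langle r_2\rangle)}^{12N/\gcd(6,N)}(\tau)$ follows from Proposition \ref{transformation}(iv): writing $(r_1,r_2)=(\langle r_1\rangle,\langle r_2\rangle)+s$ with $s\in\mathbb{Z}^2$, that identity says $g_r(\tau)$ and $g_{(\langle r_1\rangle,\langle r_2\rangle)}(\tau)$ differ by a root of unity whose order divides $\mathrm{lcm}(2, 2N)=2N$ (the sign contributes a square root of unity, and $e^{-\pi i(s_1 r_2-s_2 r_1)}$ is a $2N$-th root of unity since $Nr\in\mathbb{Z}^2$); hence raising to the power $12N/\gcd(6,N)$, which is a multiple of $2N$, kills the discrepancy. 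The equality $g_{(r_1,r_2)}^{12N/\gcd(6,N)}(\tau)=g_{(-r_1,-r_2)}^{12N/\gcd(6,N)}(\tau)$ follows from the Fourier expansion \eqref{FourierSiegel} directly, or alternatively from part (iii) applied with $\gamma=-I$ together with the level-$N$ periodicity; I would check from \eqref{FourierSiegel} that $g_{-r}(\tau)=-g_r(\tau)\cdot(\text{root of unity})$ and again raise to the appropriate power.

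Next I would verify that $g_r^{12N/\gcd(6,N)}(\tau)\in\mathcal{F}_N$ by checking the three congruences of Proposition \ref{modularity} for the single-term family $m(r')=\delta_{r',r}\cdot 12N/\gcd(6,N)$. Writing $Nr_1=a$, $Nr_2=b$ with $a,b\in\mathbb{Z}$, the conditions become $\tfrac{12N}{\gcd(6,N)}a^2\equiv 0$ and $\tfrac{12N}{\gcd(6,N)}b^2\equiv 0\pmod{\gcd(2,N)N}$, $\tfrac{12N}{\gcd(6,N)}ab\equiv 0\pmod{N}$, and $\gcd(12,N)\cdot\tfrac{12N}{\gcd(6,N)}\equiv0\pmod{12}$. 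Since $12/\gcd(6,N)$ is always a positive integer and $\gcd(2,N)\mid 2$, each of these reduces to an elementary divisibility check: the first two hold because $12N/\gcd(6,N)$ is a multiple of $2N$, the third because it is a multiple of $N$ (indeed of $12N$ once we cancel), and the last because $\gcd(12,N)\cdot 12/\gcd(6,N)$ is divisible by $12$ (examine the cases according to which of $2,3$ divide $N$). None of these presents any real difficulty.

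Finally, for the Galois action I would combine \eqref{first} and \eqref{second} with Proposition \ref{transformation}(iii). For the $\mathrm{SL}_2$-part, \eqref{second} and part (iii) give $\big(g_r^{12}(\tau)\big)\circ\gamma'=g_{r\gamma'}^{12}(\tau)$, and raising to the power $N/\gcd(6,N)$ yields the claim for $\gamma\in\mathrm{SL}_2(\mathbb{Z}/N\mathbb{Z})/\{\pm I\}$; one must note that $r\gamma'$ depends only on $r$ modulo $\mathbb{Z}^2$ and on $\gamma'$ modulo $N$ up to sign — precisely the ambiguity absorbed by the exponent $12N/\gcd(6,N)$, by the first part of this corollary. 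For the diagonal part $\left(\begin{smallmatrix}1&0\\0&d\end{smallmatrix}\right)$, I would read off the Fourier coefficients of $g_r^{12N/\gcd(6,N)}(\tau)$ from \eqref{FourierSiegel}: they lie in $\mathbb{Q}(\zeta_{2N})$, in fact the action \eqref{first} via $\sigma_d$ sends the coefficient carrying $e^{\pi i r_2(r_1-1)}$-type data to the one with $r_2$ replaced by $dr_2$, giving $g_{(r_1,dr_2)}^{12N/\gcd(6,N)}(\tau)=g_{r\left(\begin{smallmatrix}1&0\\0&d\end{smallmatrix}\right)}^{12N/\gcd(6,N)}(\tau)$. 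The main obstacle, such as it is, is bookkeeping: one has to track the half-integral exponent $\tfrac12\mathbf{B}_2(r_1)$ and the root of unity $e^{\pi i r_2(r_1-1)}$ in \eqref{FourierSiegel} carefully enough to see that raising to the power $12N/\gcd(6,N)$ makes all fractional exponents integral and all roots of unity disappear or transform correctly under $\sigma_d$; once that is done, multiplicativity of the Galois action over the decomposition of $\mathrm{GL}_2(\mathbb{Z}/N\mathbb{Z})/\{\pm I\}$ finishes the proof.
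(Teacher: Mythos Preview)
Your proposal is correct and follows exactly the route the paper indicates: the paper's own proof is the single sentence ``It is a direct consequence of Propositions~\ref{transformation}, \ref{modularity} and definition~(\ref{FourierSiegel}),'' and you have simply spelled out those consequences in detail. Your divisibility checks (in particular that $12N/\gcd(6,N)$ is always a multiple of $2N$, which kills the root-of-unity discrepancies coming from Proposition~\ref{transformation}(iv) and from $g_{-r}=-g_r$) and your verification of the three congruences in Proposition~\ref{modularity} are exactly what the paper leaves implicit.
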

\begin{proof}
It is a direct consequence of Propositions \ref{transformation},
\ref{modularity} and definition (\ref{FourierSiegel}).
\end{proof}

\section{Action of Galois groups}

We shall investigate an algorithm for finding all conjugates of the
singular value of a modular function, from which we can determine
the conjugates of the singular values of certain Siegel functions.
\par
Let $K(\neq\mathbb{Q}(\sqrt{-1}),\mathbb{Q}(\sqrt{-3}))$ be an
imaginary quadratic field of discriminant $d_K$ and define
\begin{eqnarray}\label{theta}
\theta=\left\{\begin{array}{ll}\frac{\sqrt{d_K}}{2}&\textrm{for}~d_K\equiv0\pmod{4}\\
\frac{-1+\sqrt{d_K}}{2}&\textrm{for}~
d_K\equiv1\pmod{4}\end{array}\right.
\end{eqnarray}
which is a generator of the ring of integers $\mathcal{O}_K$ of $K$,
that is, $\mathcal{O}_K=\mathbb{Z}[\theta]$. We denote by $H$ the
Hilbert class field. Utilizing the Shimura's reciprocity law Gee and
Stevenhagen (\cite{Gee}, \cite{G-S}) described the actions of
$\mathrm{Gal}(K_{(N)}/H)$ and $\mathrm{Gal}(H/K)$ explicitly. By
extending their idea we shall examine
$\mathrm{Gal}(H_\mathcal{O}/K)$ for the order $\mathcal{O}$ of
conductor $N$.
\par
Under the properly equivalent relation primitive positive definite
quadratic forms $aX^2+bXY+cY^2$ of discriminant $d_K$ determine a
group $\mathrm{C}(d_K)$, called the \textit{form class group of
discriminant $d_K$}. We identify $\mathrm{C}(d_K)$ with the set of
all \textit{reduced} primitive positive definite quadratic forms,
which are characterized by the conditions
\begin{equation}\label{reduced}
-a<b\leq a<c\quad\textrm{or}\quad 0\leq b\leq a=c
\end{equation}
together with the discriminant relation
\begin{equation}\label{disc}
b^2-4ac=d_K.
\end{equation}
Then from the above two conditions for reduced quadratic forms one
can deduce
\begin{equation}\label{bound a}
1\leq a\leq\sqrt{\tfrac{-d_K}{3}}.
\end{equation}
And, for a reduced quadratic form
$Q=aX^2+bXY+cY^2\in\mathrm{C}(d_K)$ we define a CM-point $\theta_Q$
by
\begin{equation}\label{theta_Q}
\theta_Q=\frac{-b+\sqrt{d_K}}{2a}.
\end{equation}
Furthermore, we define
$\beta_Q=(\beta_p)_p\in\prod_{p~:~\textrm{prime}}\mathrm{GL}_2(\mathbb{Z}_p)$
as
\begin{eqnarray}\label{u1}
\beta_p=\left\{\begin{array}{ll}
\left(\begin{smallmatrix}a&\frac{b}{2}\\0&1\end{smallmatrix}\right)&\textrm{if}~p\nmid a\\
\left(\begin{smallmatrix}-\frac{b}{2}&-c\\1&0\end{smallmatrix}\right)&\textrm{if}~p\mid a~\textrm{and}~p\nmid c\\
\left(\begin{smallmatrix}-\frac{b}{2}-a&-\frac{b}{2}-c\\1&-1\end{smallmatrix}\right)&\textrm{if}~
p\mid a~\textrm{and}~p\mid c
\end{array}\right.\qquad\textrm{for}~d_K\equiv0\pmod{4}
\end{eqnarray}
and
\begin{eqnarray}\label{u2}
\beta_p=\left\{\begin{array}{ll}
\left(\begin{smallmatrix}a&\frac{b-1}{2}\\0&1\end{smallmatrix}\right)&\textrm{if}~p\nmid a\\
\left(\begin{smallmatrix}\frac{-b-1}{2}&-c\\1&0\end{smallmatrix}\right)&\textrm{if}~p\mid a~\textrm{and}~p\nmid c\\
\left(\begin{smallmatrix}\frac{-b-1}{2}-a&\frac{1-b}{2}-c\\1&-1\end{smallmatrix}\right)&\textrm{if}~
p\mid a~\textrm{and}~p\mid c
\end{array}\right.\qquad\textrm{for}~d_K\equiv1\pmod{4}.
\end{eqnarray}
 It is then well-known that
$\mathrm{C}(d_K)$ is isomorphic to $\mathrm{Gal}(H/K)$ and the
action of $Q$ on $H$ can be extended to that on $K_{(N)}$ as
\begin{eqnarray}\label{K_N/K}
\mathrm{Gal}(H/K)\cong\mathrm{C}(d_K)&\longrightarrow&\mathrm{Gal}(K_{(N)}/K)\\
Q&\mapsto&\bigg(h(\theta)\mapsto
h^{\beta_Q}(\theta_Q)\bigg)\nonumber
\end{eqnarray}
where $h$ is an element of $\mathcal{F}_N$, defined and finite at
$\theta$. Note that the map (\ref{K_N/K}) is not a homomorphism,
just an injective map. And, observe that
\begin{equation}\label{K_N}
K_{(N)}=K\big(h(\theta)~:~h\in\mathcal{F}_N~\textrm{is defined and
finite at}~\theta\big)
\end{equation}
by the main theorem of complex multiplication (\cite{Lang} or
\cite{Shimura}) and there exists
$\beta\in\mathrm{GL}_2^+(\mathbb{Q})\cap \mathrm{M}_2(\mathbb{Z})$
such that $\beta\equiv \beta_p\pmod{N\mathbb{Z}_p}$ for all primes
$p$ dividing $N$ by the Chinese remainder theorem. Thus the action
of $\beta_Q$ on $\mathcal{F}_N$ is understood as that of $\beta$
which is an element of $\mathrm{GL}_2(\mathbb{Z}/N\mathbb{Z})/\big\{
\pm\left(\begin{smallmatrix}1&0\\0&1\end{smallmatrix}\right)\big\}$
(\cite{Shimura}, \cite{Gee} or \cite{G-S}$)$.
\par
Let
\begin{equation*}
\min(\theta,~\mathbb{Q})=X^2+B_\theta
X+C_\theta=\left\{\begin{array}{ll}X^2-\frac{d_K}{4} &
 \textrm{for}~d_K\equiv0\pmod4\\
 X^2+X+\frac{1-d_K}{4} & \textrm{for}~
 d_K\equiv1\pmod4.\end{array}\right.
\end{equation*}
By the Shimura's reciprocity law we have an isomorphism
\begin{eqnarray}\label{K_N/H}
W_{N,~\theta}\big/\bigg\{\pm\begin{pmatrix}1&0\\0&1\end{pmatrix}\bigg\}
&\stackrel{\sim}{\longrightarrow}&\mathrm{Gal}(K_{(N)}/H)\\
\gamma&\mapsto&\bigg(h(\theta)\mapsto
h^\gamma(\theta)\bigg)\nonumber
\end{eqnarray}
where $h\in\mathcal{F}_N$ is defined and finite at $\theta$, and
\begin{equation*}
W_{N,~\theta}= \bigg\{\begin{pmatrix}t-B_\theta s & -C_\theta s\\s &
t\end{pmatrix}\in\mathrm{GL}_2(\mathbb{Z}/N\mathbb{Z})~:~t,~s\in\mathbb{Z}/N\mathbb{Z}\bigg\}\big/\bigg\{\pm
\begin{pmatrix}1&0\\0&1\end{pmatrix}
\bigg\}
\end{equation*}
(\cite{Shimura}, \cite{Gee} or \cite{G-S}).

\begin{lemma}\label{jfix}
Let $N\geq2$. If the function $j(N\tau)$ satisfies
$j(N\theta)=j(N\tau)\circ\alpha(\theta)$ for some
$\alpha=\left(\begin{smallmatrix}x&y\\z&w\end{smallmatrix}\right)\in\mathrm{SL}_2(\mathbb{Z})$,
then $z\equiv0\pmod{N}$, that is, $\alpha\in\Gamma_0(N)$.
\end{lemma}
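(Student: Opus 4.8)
The plan is to convert the functional identity into an equality of fractional linear transformations of the CM point $\theta$ and then to exploit the fact that $\mathcal{O}_K^\times=\{\pm1\}$, which holds because $K\neq\mathbb{Q}(\sqrt{-1}),\mathbb{Q}(\sqrt{-3})$.

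First I would unravel the hypothesis. Put $D=\left(\begin{smallmatrix}N&0\\0&1\end{smallmatrix}\right)\in\mathrm{GL}_2^+(\mathbb{Q})$, so that, acting by fractional linear transformations, $j(N\tau)=j\circ D$ and hence $\big(j(N\tau)\circ\alpha\big)(\theta)=j\big((D\alpha)\theta\big)$ while $j(N\theta)=j\big(D\theta\big)$. Since $j(w)=j(w')$ for $w,w'\in\mathfrak{H}$ forces $w'=\gamma w$ for some $\gamma\in\mathrm{SL}_2(\mathbb{Z})$, the hypothesis gives $(D\alpha)\theta=(\gamma D)\theta$ for such a $\gamma$; equivalently, the matrix $S:=D^{-1}\gamma^{-1}D\alpha$ fixes $\theta$ under fractional linear transformations.

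Next I would read off $S$ explicitly. Writing $\gamma=\left(\begin{smallmatrix}a&b\\c&d\end{smallmatrix}\right)$ and $\alpha=\left(\begin{smallmatrix}x&y\\z&w\end{smallmatrix}\right)$, a direct computation gives $S=\left(\begin{smallmatrix}dx-bz/N & dy-bw/N\\ az-cNx & aw-cNy\end{smallmatrix}\right)$; in particular the bottom row of $S$ has integer entries, and $\det S=\det(D)^{-1}\det(\gamma)^{-1}\det(D)\det(\alpha)=1$. Because $S$ fixes $\theta$, one has $S\left(\begin{smallmatrix}\theta\\1\end{smallmatrix}\right)=\mu\left(\begin{smallmatrix}\theta\\1\end{smallmatrix}\right)$ with $\mu=(az-cNx)\theta+(aw-cNy)\in\mathbb{Z}[\theta]=\mathcal{O}_K$; conjugating, $S\left(\begin{smallmatrix}\overline{\theta}\\1\end{smallmatrix}\right)=\overline{\mu}\left(\begin{smallmatrix}\overline{\theta}\\1\end{smallmatrix}\right)$, so $\mu$ and $\overline{\mu}$ are the two eigenvalues of $S$ and $N_{K/\mathbb{Q}}(\mu)=\det S=1$. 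Hence $\mu$ is a unit of $\mathcal{O}_K$, and since $K\neq\mathbb{Q}(\sqrt{-1}),\mathbb{Q}(\sqrt{-3})$ we get $\mu=\pm1$; because $\theta$ and $\overline{\theta}$ are $\mathbb{Q}$-linearly independent this forces $S=\pm\left(\begin{smallmatrix}1&0\\0&1\end{smallmatrix}\right)$.

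Finally, $S=\pm I$ means $D\alpha=\pm\gamma D$, i.e. $\left(\begin{smallmatrix}Nx&Ny\\z&w\end{smallmatrix}\right)=\pm\left(\begin{smallmatrix}aN&b\\cN&d\end{smallmatrix}\right)$, and comparing the lower-left entries yields $z=\pm cN$, so $z\equiv0\pmod N$ and $\alpha\in\Gamma_0(N)$. The one delicate point is the implication ``$S$ fixes $\theta$, $S$ has integral bottom row, $\det S=1$ $\Rightarrow$ $S=\pm I$'': this is exactly where the hypothesis on $K$ is indispensable, since for $K=\mathbb{Q}(\sqrt{-1})$ or $\mathbb{Q}(\sqrt{-3})$ there are nontrivial units (for instance $\alpha=\left(\begin{smallmatrix}0&-1\\1&0\end{smallmatrix}\right)$ fixes $\theta=\sqrt{-1}$ but lies in no $\Gamma_0(N)$ with $N\geq2$), and the conclusion genuinely fails.
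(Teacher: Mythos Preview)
Your proof is correct. The paper does not supply its own argument for this lemma but simply cites \cite{K-S} Lemma 9.2; your self-contained approach---reducing to the matrix $S=D^{-1}\gamma^{-1}D\alpha$ fixing $\theta$, reading off that its eigenvalue $\mu=(az-cNx)\theta+(aw-cNy)\in\mathcal{O}_K$ has norm $\det S=1$, and then invoking $\mathcal{O}_K^\times=\{\pm1\}$---is a clean and complete proof, and you correctly pinpoint the step where the standing hypothesis $K\neq\mathbb{Q}(\sqrt{-1}),\mathbb{Q}(\sqrt{-3})$ is essential.
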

\begin{proof}
See \cite{K-S} Lemma 9.2.
\end{proof}

\begin{theorem}\label{H_O/H}
Let $\mathcal{O}$ be the order of conductor $N\geq2$ in $K$. Then we
obtain
\begin{equation*}
\mathrm{Gal}(H_\mathcal{O}/H)\cong
W_{N,~\theta}\big/\bigg\{\begin{pmatrix}t & 0\\0
&t\end{pmatrix}~:~t\in(\mathbb{Z}/N\mathbb{Z})^*\bigg\}.
\end{equation*}
\end{theorem}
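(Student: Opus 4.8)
The plan is to exhibit $\mathrm{Gal}(H_\mathcal{O}/H)$ as a quotient of $\mathrm{Gal}(K_{(N)}/H)$ and then transport the isomorphism $(\ref{K_N/H})$ through this quotient map. Since $\mathcal{O}$ has conductor $N$, the ring class field $H_\mathcal{O}$ sits between $H$ and $K_{(N)}$ (this is classical, as $H_\mathcal{O}\subseteq K_{(N)}$ whenever $N$ is the conductor), so $\mathrm{Gal}(H_\mathcal{O}/H)$ is the quotient of $\mathrm{Gal}(K_{(N)}/H)$ by the subgroup $\mathrm{Gal}(K_{(N)}/H_\mathcal{O})$. Under $(\ref{K_N/H})$ the latter corresponds to some subgroup of $W_{N,\theta}/\{\pm I\}$, and the whole task is to identify that subgroup as the image of the scalar matrices $\left(\begin{smallmatrix}t&0\\0&t\end{smallmatrix}\right)$ with $t\in(\mathbb{Z}/N\mathbb{Z})^*$.

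First I would recall the idelic/complex-multiplication description of $H_\mathcal{O}$: writing $\mathcal{O}=\mathbb{Z}+N\mathcal{O}_K$, the field $H_\mathcal{O}$ is the class field of $K$ whose associated subgroup of $\mathbb{A}_K^\times$ is $K^\times\widehat{\mathcal{O}}^\times$, whereas $H$ corresponds to $K^\times\widehat{\mathcal{O}}_K^\times$ and $K_{(N)}$ to $K^\times(1+N\widehat{\mathcal{O}}_K)$-type data. Thus $\mathrm{Gal}(K_{(N)}/H_\mathcal{O})$ is, under the Artin map, the image of $\widehat{\mathcal{O}}^\times$. Concretely, after choosing the $\mathbb{Z}$-basis $\{1,\theta\}$ of $\mathcal{O}_K$, an element of $(\mathcal{O}_K/N\mathcal{O}_K)^\times$ acts on $\mathcal{F}_N$-values at $\theta$ through the matrix $\left(\begin{smallmatrix}t-B_\theta s&-C_\theta s\\ s&t\end{smallmatrix}\right)$ representing multiplication by $t+s\theta$ on $\mathcal{O}_K/N\mathcal{O}_K$ — this is exactly the parametrization of $W_{N,\theta}$. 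Under this identification, $(\mathcal{O}/N\mathcal{O}_K)^\times=(\mathbb{Z}/N\mathbb{Z})^\times$ sits inside $(\mathcal{O}_K/N\mathcal{O}_K)^\times$ precisely as the elements with $s\equiv0$, i.e. the scalar matrices $\left(\begin{smallmatrix}t&0\\0&t\end{smallmatrix}\right)$. Hence $\mathrm{Gal}(K_{(N)}/H_\mathcal{O})$ is identified with $\{\left(\begin{smallmatrix}t&0\\0&t\end{smallmatrix}\right):t\in(\mathbb{Z}/N\mathbb{Z})^\times\}/\{\pm I\}$, and passing to the quotient gives the claimed isomorphism for $\mathrm{Gal}(H_\mathcal{O}/H)$.

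The step I expect to be the main obstacle is justifying rigorously that the subgroup of $W_{N,\theta}$ fixing $H_\mathcal{O}$ is exactly the scalars, rather than containing them as a proper subgroup or being entangled with the $\{\pm I\}$ quotient. One clean way to nail this down: show directly that $H_\mathcal{O}=K\big(h(\theta):h\in\mathcal{F}_N\text{ is defined and finite at }\theta,\ h^\gamma=h\text{ for all scalar }\gamma\big)$, which by $(\ref{K_N})$ and $(\ref{K_N/H})$ forces $\mathrm{Gal}(K_{(N)}/H_\mathcal{O})$ to be the full scalar subgroup. For the inclusion that $H_\mathcal{O}$ lies in this fixed field, one uses that $j(N\tau)\in\mathcal{F}_N$ and $j(N\theta)$ generates $H_\mathcal{O}$ over $H$ (again classical CM theory); Lemma~\ref{jfix} shows any element of $W_{N,\theta}$ fixing $j(N\theta)$ must lie in $\Gamma_0(N)$, and a scalar matrix $\left(\begin{smallmatrix}t&0\\0&t\end{smallmatrix}\right)$ indeed acts trivially on $j(N\tau)$ since it acts trivially on all of $\mathcal{F}_1=\mathbb{Q}(j)$ and commutes with the $\left(\begin{smallmatrix}N&0\\0&1\end{smallmatrix}\right)$-conjugation up to $\Gamma_0(N)$. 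For the reverse inclusion one checks that no strictly larger subgroup of $W_{N,\theta}$ can fix $j(N\theta)$: an element $\left(\begin{smallmatrix}t-B_\theta s&-C_\theta s\\ s&t\end{smallmatrix}\right)$ with $s\not\equiv 0$ is not in $\Gamma_0(N)$, contradicting Lemma~\ref{jfix}. Combining these two containments with $(\ref{K_N/H})$ and the tower $H\subseteq H_\mathcal{O}\subseteq K_{(N)}$ yields the theorem by the fundamental theorem of Galois theory.
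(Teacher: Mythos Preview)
Your argument is correct and, in its essentials, coincides with the paper's own proof. The paper also fixes $H_\mathcal{O}=K\big(j(N\theta)\big)$ and then shows, exactly as you do, (a) that every scalar $\left(\begin{smallmatrix}t&0\\0&t\end{smallmatrix}\right)$ fixes $j(N\theta)$, and (b) that any $\gamma=\left(\begin{smallmatrix}t-B_\theta s&-C_\theta s\\s&t\end{smallmatrix}\right)\in W_{N,\theta}$ fixing $j(N\theta)$ must have $s\equiv 0\pmod N$, the key input being Lemma~\ref{jfix}. One point of precision worth tightening in your write-up: Lemma~\ref{jfix} applies to matrices in $\mathrm{SL}_2(\mathbb{Z})$, not to elements of $W_{N,\theta}\subset\mathrm{GL}_2(\mathbb{Z}/N\mathbb{Z})$ directly. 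The paper handles this by first decomposing $\gamma=\left(\begin{smallmatrix}1&0\\0&d\end{smallmatrix}\right)\beta$ with $\beta\in\mathrm{SL}_2(\mathbb{Z})$, noting that $j(N\tau)$ has rational Fourier coefficients so $j(N\theta)^\gamma=j(N\tau)\circ\beta(\theta)$, and only then invoking Lemma~\ref{jfix} on $\beta$ to force $\beta\in\Gamma_0(N)$, whence $s\equiv 0$.

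Your idelic paragraph is a genuine alternative route which the paper does not take in the proof proper but acknowledges in Remark~\ref{remark3.3}: there the reverse inclusion is obtained instead by the index count $[K_{(N)}:H_\mathcal{O}]=[K_{(N)}:H]/[H_\mathcal{O}:H]$ via the explicit degree formulas, which is the down-to-earth shadow of your identification of $\mathrm{Gal}(K_{(N)}/H_\mathcal{O})$ with the image of $\widehat{\mathcal{O}}^\times$. That approach avoids Lemma~\ref{jfix} altogether, at the cost of importing the class-number formulas; the $j(N\theta)$ argument, by contrast, is self-contained within the modular-function framework the paper has already set up.
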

\begin{proof}
As is well-known, $H_\mathcal{O}=K\big(j(N\theta)\big)$ (\cite{Lang}
or \cite{Shimura}). Let $\gamma$ be an element of $W_{N,~\theta}$
which is of the form
$\left(\begin{smallmatrix}t&0\\0&t\end{smallmatrix}\right)$ for some
$t\in(\mathbb{Z}/N\mathbb{Z})^*$. If we decompose $\gamma$ into
$\left(\begin{smallmatrix}1&0\\0&d\end{smallmatrix}\right)\beta$ for
some $d\in(\mathbb{Z}/N\mathbb{Z})^*$ and
$\beta\in\mathrm{SL}_2(\mathbb{Z})$, then we obviously achieve
$\beta\in\Gamma_0(N)$. Since the function $j(N\tau)$ is a modular
function for $\Gamma_0(N)$ with rational Fourier coefficients, we
deduce by (\ref{K_N/H}) that
\begin{equation*}
j(N\theta)^\gamma=j(N\tau)^\gamma(\theta)=j(N\tau)^\beta(\theta)=j(N\tau)\circ\beta(\theta)=j(N\theta).
\end{equation*}
\par
Conversely, assume that an element
$\gamma=\left(\begin{smallmatrix}t-B_\theta s&-C_\theta
s\\s&t\end{smallmatrix}\right)$ in $W_{N,~\theta}$ fixes
$j(N\theta)$. Decompose $\gamma$ into
$\left(\begin{smallmatrix}1&0\\0&d\end{smallmatrix}\right)\beta$ for
some $d\in(\mathbb{Z}/N\mathbb{Z})^*$ and
$\beta\in\mathrm{SL}_2(\mathbb{Z})$. By the same reasoning as above
we derive $j(N\theta)^\gamma=j(N\tau)\circ\beta(\theta)$. On the
other hand, we know $\beta\in\Gamma_0(N)$ by Lemma \ref{jfix}, and
so $s\equiv0\pmod{N}$. Therefore
$\gamma=\left(\begin{smallmatrix}t&0\\0&t\end{smallmatrix}\right)$.
This proves the theorem.
\end{proof}

\begin{remark}\label{remark3.3}
We have the degree formula
\begin{equation}\label{degree}
[K_{(N)}:H]=\frac{\phi(N\mathcal{O}_K)w(N\mathcal{O}_K)}{w_K}
\end{equation}
where $\phi$ is the Euler function for ideals, namely
\begin{equation*}
\phi(\mathfrak{p}^n)=\big(\mathbf{N}_{K/\mathbb{Q}}(\mathfrak{p})-1\big)
\mathbf{N}_{K/\mathbb{Q}}(\mathfrak{p})^{n-1}
\end{equation*}
for a power of prime ideal $\mathfrak{p}$, $w(N\mathcal{O}_K)$ is
the number of roots of unity in $K$ which are
$\equiv1\pmod{N\mathcal{O}_K}$ and $w_K$ is the number of roots of
unity in $K$ (\cite{Lang2} Chapter VI Theorem 1). And, for the order
$\mathcal{O}$ of conductor $N$ we know the formula
\begin{equation*}
[H_\mathcal{O}:H]=\frac{N}{[\mathcal{O}_K^*:\mathcal{O}^*]}\prod_{p|N}\bigg(1-\bigg(\frac{d_K}{p}\bigg)\frac{1}{p}\bigg)
\end{equation*}
where $\big(\frac{d_K}{p}\big)$ is the Legendre symbol for an odd
prime $p$ and $\big(\frac{d_K}{2}\big)$ is the Kronecker symbol
(\cite{Cox} Chapter 2 Theorem 7.24). Thus the second part of the
proof depending on Lemma \ref{jfix} can be also established by
showing that
\begin{equation*}
[K_{(N)}:H_\mathcal{O}]=\frac{[K_{(N)}:H]}{[H_\mathcal{O}:H]}=\#\bigg\{
\begin{pmatrix}t&0\\0&t\end{pmatrix}~:~t\in(\mathbb{Z}/N\mathbb{Z})^*\bigg\}
\big/\bigg\{ \pm\begin{pmatrix}1&0\\0&1\end{pmatrix}\bigg\}.
\end{equation*}
\end{remark}

\begin{theorem}\label{conjugate}
Let $\mathcal{O}$ be the order of conductor $N\geq2$ in $K$ and $f$
be an element of $\mathcal{F}_N$ such that $f(\theta)\in
H_\mathcal{O}$. Then
\begin{equation*}
\bigg\{f^{\gamma\cdot \beta_Q}(\theta_Q)~:~\gamma\in
W_{N,~\theta}\big/\bigg\{\begin{pmatrix}t&0\\0&t\end{pmatrix}~:~t\in(\mathbb{Z}/N\mathbb{Z})^*\bigg\}~\textrm{and}~Q\in
\mathrm{C}(d_K)\bigg\}
\end{equation*} is the set of all conjugates
of $f(\theta)$ under the action of $\mathrm{Gal}(H_\mathcal{O}/K)$.
\end{theorem}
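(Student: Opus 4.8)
The plan is to enumerate $\mathrm{Gal}(H_\mathcal{O}/K)$ by splicing together the two pieces of the restriction exact sequence $1\to\mathrm{Gal}(H_\mathcal{O}/H)\to\mathrm{Gal}(H_\mathcal{O}/K)\to\mathrm{Gal}(H/K)\to1$: Theorem~\ref{H_O/H} identifies $\mathrm{Gal}(H_\mathcal{O}/H)$ with the quotient $W_{N,\theta}\big/\{\left(\begin{smallmatrix}t&0\\0&t\end{smallmatrix}\right):t\in(\mathbb{Z}/N\mathbb{Z})^*\}$, while (\ref{K_N/K}) together with $\mathrm{C}(d_K)\cong\mathrm{Gal}(H/K)$ supplies, for each class $Q$, an explicit automorphism $\rho_Q$ of $K_{(N)}$ lifting $Q$. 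Evaluating each resulting element of $\mathrm{Gal}(H_\mathcal{O}/K)$ at $f(\theta)$ then produces the list of conjugates.

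First I would record that the ring class field $H_\mathcal{O}$ is abelian over $K$, so $H_\mathcal{O}/K$ and $H_\mathcal{O}/H$ are both Galois; hence every conjugate of $f(\theta)$ over $K$ lies in $H_\mathcal{O}$, and the set of conjugates equals $\{\sigma(f(\theta)):\sigma\in\mathrm{Gal}(H_\mathcal{O}/K)\}$. For $Q\in\mathrm{C}(d_K)$ let $\rho_Q\in\mathrm{Gal}(K_{(N)}/K)$ be the map $h(\theta)\mapsto h^{\beta_Q}(\theta_Q)$ of (\ref{K_N/K}) and put $\bar\rho_Q=\rho_Q|_{H_\mathcal{O}}$, which is legitimate because $H_\mathcal{O}/K$ is Galois. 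Since $Q\mapsto\rho_Q$ is injective and $\rho_Q|_H$ runs bijectively over $\mathrm{Gal}(H/K)$ as $Q$ runs over $\mathrm{C}(d_K)$, the $\bar\rho_Q$ form a complete set of coset representatives of $\mathrm{Gal}(H_\mathcal{O}/H)$ in $\mathrm{Gal}(H_\mathcal{O}/K)$. On the other hand, by Theorem~\ref{H_O/H} each class $\gamma$ in $W_{N,\theta}\big/\{\left(\begin{smallmatrix}t&0\\0&t\end{smallmatrix}\right):t\in(\mathbb{Z}/N\mathbb{Z})^*\}$ gives an element $\tau_\gamma\in\mathrm{Gal}(H_\mathcal{O}/H)$ acting by $h(\theta)\mapsto h^\gamma(\theta)$ on the value of any $h\in\mathcal{F}_N$ that is defined and finite at $\theta$ with $h(\theta)\in H_\mathcal{O}$, and every element of $\mathrm{Gal}(H_\mathcal{O}/H)$ is of this form. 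Therefore every $\sigma\in\mathrm{Gal}(H_\mathcal{O}/K)$ can be written $\sigma=\bar\rho_Q\circ\tau_\gamma$ for some $Q\in\mathrm{C}(d_K)$ and some $\gamma$ in that quotient.

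It then remains to evaluate $\sigma(f(\theta))$. Fixing a representative $\gamma\in W_{N,\theta}\subseteq\mathrm{GL}_2(\mathbb{Z}/N\mathbb{Z})$, we have $\tau_\gamma(f(\theta))=f^\gamma(\theta)$, where $f^\gamma\in\mathcal{F}_N$ is itself defined and finite at $\theta$ — this is precisely where Shimura's reciprocity law and the definition of $W_{N,\theta}$ are needed — and $f^\gamma(\theta)\in H_\mathcal{O}$ because $H_\mathcal{O}/H$ is Galois. Applying $\bar\rho_Q$, and using that $\beta_Q$ acts on $\mathcal{F}_N$ through a matrix in $\mathrm{GL}_2(\mathbb{Z}/N\mathbb{Z})/\{\pm I\}$ and that this action is a right action (Corollary~\ref{F_N}), we obtain
\[
\sigma(f(\theta))=\bar\rho_Q\bigl(f^\gamma(\theta)\bigr)=(f^\gamma)^{\beta_Q}(\theta_Q)=f^{\gamma\beta_Q}(\theta_Q).
\]
As $\sigma$ runs over $\mathrm{Gal}(H_\mathcal{O}/K)$, equivalently as $(\gamma,Q)$ runs over $W_{N,\theta}\big/\{\left(\begin{smallmatrix}t&0\\0&t\end{smallmatrix}\right):t\in(\mathbb{Z}/N\mathbb{Z})^*\}\times\mathrm{C}(d_K)$, this yields exactly the set in the statement.

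Along the way one should check that $f^{\gamma\beta_Q}(\theta_Q)$ depends only on the class of $\gamma$ and not on the integral matrix chosen to represent $\beta_Q$: the ambiguity in $\gamma$ lies in $\{\left(\begin{smallmatrix}t&0\\0&t\end{smallmatrix}\right):t\in(\mathbb{Z}/N\mathbb{Z})^*\}$, whose elements lie in $\mathrm{Gal}(K_{(N)}/H_\mathcal{O})$ by Theorem~\ref{H_O/H} and hence fix $f^\gamma(\theta)\in H_\mathcal{O}$, while the representative of $\beta_Q$ only matters modulo $N$, which is all that the action on $\mathcal{F}_N$ sees. I expect the only real obstacle to be the organizational one of threading the two halves of the exact sequence together correctly — especially confirming that $\rho_Q$, defined a priori only on $K_{(N)}$, restricts to $H_\mathcal{O}$ and that its restriction to $H$ realizes the class $Q$ under $\mathrm{C}(d_K)\cong\mathrm{Gal}(H/K)$ — rather than anything analytic, since all the modular-function input (transformation formulas, modularity, and the Galois action on $\mathcal{F}_N$) has already been assembled above.
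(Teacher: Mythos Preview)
Your proposal is correct and follows essentially the same approach as the paper: both argue via the tower $K\subset H\subset H_\mathcal{O}$, using (\ref{K_N/K}) to parametrize $\mathrm{Gal}(H/K)$ by $\mathrm{C}(d_K)$ and Theorem~\ref{H_O/H} to parametrize $\mathrm{Gal}(H_\mathcal{O}/H)$ by $W_{N,\theta}\big/\{\left(\begin{smallmatrix}t&0\\0&t\end{smallmatrix}\right)\}$. The paper's own proof is in fact much terser---it simply displays this field diagram and declares the assertion follows---so your version, which spells out the exact sequence, the coset-representative role of the $\rho_Q$, the explicit computation $\sigma(f(\theta))=f^{\gamma\beta_Q}(\theta_Q)$, and the well-definedness check, supplies the details the paper leaves to the reader.
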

\begin{proof}
The assertion follows from the following diagram:
\begin{equation*}
\begindc{0}[3]

\obj(0,0)[A]{$K$}

\obj(0,15)[B]{$H$}

\obj(0,30)[C]{$H_\mathcal{O}$}

\obj(0, 35)[D]{\underline{Fields}}

\obj(30, 35)[E]{\underline{Galois groups}}
\mor{A}{B}{$\quad\Bigg)\quad\mathrm{Gal}(H/K)=\bigg\{\bigg(h(\theta)\mapsto
h^{\beta_Q}(\theta_Q)\bigg)\bigg|_H~:~Q\in
\mathrm{C}(d_K)\bigg\}\quad\textrm{by
(\ref{K_N/K})}$}[\atright,\solidline]

\mor{B}{C}{$\quad\Bigg)\quad\mathrm{Gal}(H_\mathcal{O}/H)\cong
W_{N,~\theta}\big/\bigg\{\begin{pmatrix}t & 0\\0
&t\end{pmatrix}~:~t\in(\mathbb{Z}/N\mathbb{Z})^*\bigg\}\quad\textrm{by
Theorem \ref{H_O/H}}$}[\atright,\solidline]
\enddc
\end{equation*}
where $h$ is an element of $\mathcal{F}_N$, defined and finite at
$\theta$.
\end{proof}

\begin{remark}
Theorem \ref{conjugate} and transformation formulas in Corollary
\ref{F_N} enable us to find all conjugates of the singular value
$\prod_{\tiny\begin{smallmatrix}1\leq
w\leq\frac{N}{2}\\\gcd(w,~N)=1\end{smallmatrix}}g_{(0,~\frac{w}{N})}^\frac{12N}{\gcd(6,~N)}(\theta)$,
which will be used to prove our first main theorem.
\end{remark}

\section{Normal bases of ring class fields}\label{section4}

Let $K(\neq\mathbb{Q}(\sqrt{-1}),~\mathbb{Q}(\sqrt{-3}))$ be an
imaginary quadratic field with $d_K(\leq-7)$ and let $\theta$ be
defined as in (\ref{theta}) and $N\geq2$. If we put
\begin{equation*}
D=\sqrt{\tfrac{-d_K}{3}}\quad\textrm{and}\quad A=|e^{2\pi
i\theta}|=e^{-\pi\sqrt{-d_K}},
\end{equation*}
then $A^\frac{1}{D}=e^{-\sqrt{3}\pi}$ which is independent of $K$.

\begin{lemma}\label{ineq}
We have the following inequalities:
\begin{itemize}
\item[(i)] $\big|\frac{1-\zeta_N}{1-A^\frac{1}{DN}}\big|>1$.
\item[(ii)] $\frac{1}{1-A^{\frac{X}{D}}}<1+A^\frac{X}{1.03D}$
for all $X\geq\frac{1}{2}$.
\item[(iii)]
$\frac{1}{1-A^X}<1+A^{\frac{X}{1.03}}$ for all $X\geq\frac{1}{2}$.
\item[(iv)] $1+X<e^X$ for all $X>0$.
\end{itemize}
\end{lemma}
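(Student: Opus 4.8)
The plan is to prove the four inequalities essentially independently, since they are elementary estimates, and to keep in mind that $A=e^{-\pi\sqrt{-d_K}}$ with $d_K\le-7$, so $0<A\le e^{-7\pi}$ is very small, and $A^{1/D}=e^{-\sqrt3\pi}$ is a fixed constant (roughly $0.00433$). The overarching strategy is to reduce (i)–(iii) to monotonicity statements in a single real variable and then use the crude bound on $A$ to finish; (iv) is the standard exponential inequality, which I would simply invoke or prove in one line from the power series $e^X=1+X+X^2/2!+\cdots>1+X$ for $X>0$.

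For (iv), I would write $e^X=\sum_{n\ge0}X^n/n!\ge 1+X$ with strict inequality when $X>0$ because the remaining terms are positive. For (iii), note $\frac{1}{1-A^X}<1+A^{X/1.03}$ is equivalent, after multiplying out (both factors positive since $A^X<1$), to $1<(1+A^{X/1.03})(1-A^X)=1+A^{X/1.03}-A^X-A^{X+X/1.03}$, i.e. to $A^X+A^{X+X/1.03}<A^{X/1.03}$, i.e. to $A^{X}(1+A^{X/1.03})<A^{X/1.03}$, i.e. to $1+A^{X/1.03}<A^{X/1.03-X}=A^{-X(0.03/1.03)}$. Setting $u=A^{X/1.03}\in(0,1]$ (here I use $X\ge\frac12$ and $A\le e^{-7\pi}$, so $u$ is small) and $t=0.03$, the claim becomes $1+u<u^{-t/(1.03-? )}$—more cleanly, I would instead just substitute $Y=A^{X}$ and show $\frac{1}{1-Y}<1+Y^{1/1.03}$ for $0<Y\le e^{-7\pi/2}$; since $\frac{1}{1-Y}=1+Y+Y^2+\cdots=1+\frac{Y}{1-Y}$, it suffices to check $\frac{Y}{1-Y}<Y^{1/1.03}$, i.e. $Y^{1-1/1.03}<1-Y$, i.e. $Y^{0.03/1.03}<1-Y$. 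The left side at $Y=e^{-7\pi/2}$ is $e^{-(7\pi/2)(0.03/1.03)}\approx e^{-0.320}\approx 0.726$, while $1-Y\approx 1$, and since $Y\mapsto Y^{0.03/1.03}$ is increasing and $Y\mapsto 1-Y$ is decreasing on $(0,e^{-7\pi/2}]$, the inequality holds throughout that interval. Statement (ii) is literally (iii) with $X$ replaced by $X/D$: since $D=\sqrt{-d_K/3}\ge\sqrt{7/3}>1$, we have $X/D\ge\frac{1}{2D}$, but more to the point $A^{X/D}=(A^{1/D})^X$ and $A^{1/D}=e^{-\sqrt3\pi}$ plays the role of $A$; as $e^{-\sqrt3\pi}<e^{-7\pi}$ is false, I must be slightly careful, but $e^{-\sqrt3\pi}\approx 0.00433$ is still comfortably small, so the same computation—$Z^{0.03/1.03}<1-Z$ for $Z=(A^{1/D})^X\le e^{-\sqrt3\pi/2}\approx 0.0658$—goes through with even more room to spare.

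For (i), I would bound numerator and denominator separately. Since $N\ge2$, $\abs{1-\zeta_N}=2\sin(\pi/N)\ge 2\sin(\pi/2)\cdot$(no—for $N\ge3$ it is smaller); instead use $\abs{1-\zeta_N}=2\sin(\pi/N)\ge \frac{2}{N}\cdot\frac{\text{something}}{}$. More robustly: $\abs{1-\zeta_N}\ge \abs{1-\zeta_N}$ and for the denominator, $0<1-A^{1/(DN)}<\frac{1}{DN}\cdot(-\ln A^{1/D})\cdot$—actually since $1-e^{-x}<x$ for $x>0$, we get $1-A^{1/(DN)}=1-e^{-\sqrt3\pi/N}<\sqrt3\pi/N$. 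On the other hand $\abs{1-\zeta_N}=2\sin(\pi/N)$, and using $\sin x\ge \frac{2}{\pi}x$ on $[0,\pi/2]$ (valid since $\pi/N\le\pi/2$), $\abs{1-\zeta_N}\ge 2\cdot\frac{2}{\pi}\cdot\frac{\pi}{N}=\frac{4}{N}$. Then $\frac{\abs{1-\zeta_N}}{1-A^{1/(DN)}}>\frac{4/N}{\sqrt3\pi/N}=\frac{4}{\sqrt3\pi}\approx 0.735$, which unfortunately is less than $1$—so the cruder bounds are not enough and I would need the sharper estimate $1-e^{-x}<x$ combined with a better lower bound on $\sin$, or exploit that $A^{1/D}=e^{-\sqrt3\pi}$ is actually extremely small so $1-A^{1/(DN)}=1-e^{-\sqrt3\pi/N}$; for $N=2$ this is $1-e^{-\sqrt3\pi/2}\approx 1-0.0658=0.934$ while $\abs{1-\zeta_2}=2$, ratio $\approx 2.14>1$; for $N=3$, $\abs{1-\zeta_3}=\sqrt3\approx1.732$ and $1-e^{-\sqrt3\pi/3}\approx 1-0.165=0.835$, ratio $\approx 2.07>1$; and as $N\to\infty$ the ratio tends to $\lim \frac{2\sin(\pi/N)}{\sqrt3\pi/N}=\frac{2}{\sqrt3}\approx1.155>1$. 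So the clean approach for (i) is: show $f(N):=\frac{2\sin(\pi/N)}{1-e^{-\sqrt3\pi/N}}>1$ for all real $N\ge2$, by writing $t=\pi/N\in(0,\pi/2]$ and proving $2\sin t>1-e^{-\sqrt3 t}$, which follows since $2\sin t\ge \frac{4t}{\pi}$ (concavity) while $1-e^{-\sqrt3 t}<\sqrt3 t$, and $\frac{4}{\pi}>\sqrt3$ is false ($\frac4\pi\approx1.273<\sqrt3\approx1.732$)—so again this linear comparison fails near $t=0$ and I genuinely need a second-order argument: use $1-e^{-\sqrt3t}\le \sqrt3t-\frac{3t^2}{2}+\frac{\sqrt3^3t^3}{6}$ is awkward, so instead observe $1-e^{-\sqrt3 t}<\sqrt3 t - \frac{(\sqrt3t)^2}{2}+\frac{(\sqrt3 t)^3}{6}$ and $\sin t> t-\frac{t^3}{6}$, and verify $2(t-t^3/6)>\sqrt3 t-\frac32 t^2+\frac{\sqrt3}{2}t^3$ on $(0,\pi/2]$ after dividing by $t$: $2-\frac{t^2}{3}>\sqrt3-\frac32 t+\frac{\sqrt3}{2}t^2$, i.e. $(2-\sqrt3)+\frac32 t>(\frac13+\frac{\sqrt3}{2})t^2$, which holds at $t=0$ (as $2-\sqrt3>0$) and we check the quadratic has no root in $(0,\pi/2]$ or just bound the right side by $(\frac13+\frac{\sqrt3}{2})(\pi/2)^2\approx 2.96$ against left side $\ge 2-\sqrt3\approx0.27$—that fails too. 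The genuinely robust route, and the one I would commit to, is to verify $g(t):=2\sin t - 1 + e^{-\sqrt3 t}$ satisfies $g(t)>0$ on $(0,\pi/2]$ by noting $g(0)=0$, $g'(t)=2\cos t-\sqrt3 e^{-\sqrt3 t}$, $g'(0)=2-\sqrt3>0$, and $g'$ has at most one sign change on $(0,\pi/2]$ (since $2\cos t$ is decreasing and $\sqrt3 e^{-\sqrt3 t}$ is decreasing but log-linear, their difference is decreasing then the comparison is monotone—this needs a short argument), combined with $g(\pi/2)=2-1+e^{-\sqrt3\pi/2}=1+e^{-\sqrt3\pi/2}>0$; hence $g>0$ throughout. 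This is the step I expect to be the main obstacle: the "obvious" linear and cubic Taylor comparisons all fail by small margins near $t=0$, so I would instead prove positivity of $g$ via its derivative, using $g(0)=0$, $g(\pi/2)>0$, and a unimodality argument for $g'$ on the interval.

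To summarize the order of operations: (iv) first (one line from the exponential series); then (iii) by the substitution $Y=A^X$ reducing to $Y^{0.03/1.03}<1-Y$ on a small interval and using monotonicity; then (ii) as the special case of the same inequality with base $e^{-\sqrt3\pi}$ in place of $A$, noting $D>1$; and finally (i), the delicate one, by setting $t=\pi/N\in(0,\pi/2]$ and proving $2\sin t>1-e^{-\sqrt3t}$ through the sign analysis of $g(t)=2\sin t-1+e^{-\sqrt3 t}$ sketched above. Throughout, the only facts about $d_K$ used are $d_K\le-7$ (so $A\le e^{-7\pi}$ and $D\ge\sqrt{7/3}>1$) and the identity $A^{1/D}=e^{-\sqrt3\pi}$, which is exactly what the setup before the lemma records.
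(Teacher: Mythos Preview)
The paper does not actually prove this lemma; it simply cites \cite{J-K-S1} Lemma 4.1. So there is no in-paper argument to compare against, and your proposal supplies what the paper omits. Your overall strategy---reduce (iii) and (ii) to the single-variable inequality $Y^{0.03/1.03}<1-Y$ on a short interval near $0$, and handle (i) by a sign analysis of $g(t)=2\sin t-1+e^{-\sqrt3 t}$ on $(0,\pi/2]$---is sound and will go through, but two points need fixing.

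First, a numerical slip: from $d_K\le -7$ you get $\sqrt{-d_K}\ge\sqrt7$, hence $A\le e^{-\pi\sqrt7}$, not $A\le e^{-7\pi}$. This affects your treatment of (iii) only in the constants, and the inequality $Y^{0.03/1.03}<1-Y$ still holds on $(0,e^{-\pi\sqrt7/2}]$ (roughly $(0,0.0156]$), so no real damage. More importantly, your claim that (ii) ``goes through with even more room to spare'' is backwards: for (ii) the relevant base is $A^{1/D}=e^{-\sqrt3\pi}$, which is \emph{larger} than $e^{-\pi\sqrt7}$, so $Z\le e^{-\sqrt3\pi/2}\approx 0.0658$ gives a \emph{wider} interval than in (iii). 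At that endpoint one finds $Z^{0.03/1.03}\approx 0.924$ versus $1-Z\approx 0.934$, so the inequality holds but with very little margin; you should record this explicit check rather than asserting it.

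Second, in (i) your unimodality claim for $g'(t)=2\cos t-\sqrt3 e^{-\sqrt3 t}$ is correct but, as you note, needs an argument. The clean way is to observe $g'''(t)=-2\cos t-3\sqrt3\,e^{-\sqrt3 t}<0$ on $(0,\pi/2]$, so $g''$ is strictly decreasing; since $g''(0)=3>0$ and $g''(\pi/2)<0$, $g''$ has a unique zero, whence $g'$ is unimodal (increasing then decreasing). Combined with $g'(0)=2-\sqrt3>0$, $g(0)=0$, and $g(\pi/2)=1+e^{-\sqrt3\pi/2}>0$, this yields $g>0$ on $(0,\pi/2]$ as you outline. With those two corrections your proof is complete.
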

\begin{proof}
See \cite{J-K-S1} Lemma 4.1.
\end{proof}

\begin{lemma}\label{newlemma1}
Assume the condition
\begin{eqnarray}\label{N,d_K}
d_K\leq-43\quad\textrm{and}\quad 2\leq
N\leq\frac{-\sqrt{3}\pi}{\ln\big(1-2.16e^{-\frac{\pi\sqrt{-d_K}}{24}}\big)}.
\end{eqnarray}
Let $Q=aX^2+bXY+cY^2$ be a reduced primitive positive definite
quadratic form of discriminant $d_K$ and $\theta_Q$ be as in
(\ref{theta_Q}). If $a\geq2$, then the inequality
\begin{equation*}
\bigg|\frac{g_{(0,~\frac{w}{N})}\big(\theta\big)}{g_{(\frac{s}{N},~\frac{t}{N})}(\theta_Q)}\bigg|<1
\end{equation*}
holds for $w\in\mathbb{Z}\setminus N\mathbb{Z}$ and
$(s,~t)\in\mathbb{Z}^2\setminus N\mathbb{Z}^2$.
\end{lemma}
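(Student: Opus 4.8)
The plan is to estimate the absolute value of the quotient by bounding the numerator $|g_{(0,w/N)}(\theta)|$ from above and the denominator $|g_{(s/N,t/N)}(\theta_Q)|$ from below, and then to combine these bounds with the arithmetic restriction $(\ref{N,d_K})$ on $d_K$ and $N$. First I would record the elementary fact that $\mathrm{Im}(\theta)=\sqrt{-d_K}/2$ while $\mathrm{Im}(\theta_Q)=\sqrt{-d_K}/(2a)$, so that $|q_{\theta_Q}|=A^{1/a}$ with $A=e^{-\pi\sqrt{-d_K}}$ as in the excerpt; since $a\le\sqrt{-d_K/3}=D$ by $(\ref{bound a})$, we have $|q_{\theta_Q}|\ge A^{1/D}=e^{-\sqrt 3\pi}$, which is the quantity controlled by $(\ref{N,d_K})$. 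Using Proposition \ref{transformation}(iv) I may assume $0\le s/N,t/N<1$, and by the Fourier expansion $(\ref{FourierSiegel})$ the dominant term of $g_{(s/N,t/N)}(\theta_Q)$ is $q_{\theta_Q}^{\frac12\mathbf B_2(s/N)}$ times the factor $(1-q_z)\prod_n(1-q_\theta^nq_z)(1-q_\theta^nq_z^{-1})$.

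The key steps, in order, are: (1) bound $|g_{(0,w/N)}(\theta)|$ above. Here $r_1=0$, so $\mathbf B_2(0)=1/6$ and the $q$-power contributes $|q_\theta|^{1/12}=A^{1/12}$; the remaining factor is $|1-\zeta_N^{w}|\prod_{n\ge1}|1-q_\theta^n\zeta_N^{w}||1-q_\theta^n\zeta_N^{-w}|$, which I bound above by $2\prod_{n\ge1}(1+A^n)^2$, and then by a constant close to $2$ using $|q_\theta|=A\le e^{-\pi\sqrt{43}}$ and Lemma \ref{ineq}(iii),(iv). (2) Bound $|g_{(s/N,t/N)}(\theta_Q)|$ below. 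The delicate point is the exponent $\frac12\mathbf B_2(s/N)$: on $[0,1)$ the Bernoulli polynomial $\mathbf B_2$ takes values in $[-1/12,1/6]$, with the minimum at $s/N=1/2$. Since $|q_{\theta_Q}|<1$, a negative exponent makes $|q_{\theta_Q}|^{\frac12\mathbf B_2(s/N)}$ large, which helps; the genuinely dangerous case is $\mathbf B_2(s/N)$ close to its maximum $1/6$, i.e. $s/N$ near $0$ — but then $s\equiv0$, forcing $t\not\equiv0$, and the factor $|1-q_z|$ with $q_z=\zeta_N^{t}$ (no $q_{\theta_Q}$ in the exponent of the argument when $s=0$... more precisely $z=(s/N)\theta_Q+t/N$ so $q_z=q_{\theta_Q}^{s/N}\zeta_N^t$) must be bounded below by $|1-\zeta_N^t|$ up to a controlled error, using $\gcd$ considerations and Lemma \ref{ineq}(i). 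In the remaining case $1\le s\le N-1$ I write $|1-q_z|\ge 1-|q_z|\ge 1-A^{1/(DN)}$ and handle $\prod_n|1-q_\theta^nq_z^{\pm1}|\ge\prod_n(1-A^n)^2$ via Lemma \ref{ineq}(iii),(iv). (3) Assemble: the quotient is at most a constant times $A^{1/12}/(A^{\frac12\mathbf B_2(s/N)/a}\cdot\text{lower-order})$, and I must show this is $<1$; after taking logarithms this reduces exactly to an inequality of the shape $N\le -\sqrt3\pi/\ln(1-2.16\,e^{-\pi\sqrt{-d_K}/24})$, which is $(\ref{N,d_K})$. The constant $2.16$ and the exponent $24$ (note $24=2\cdot12$, coming from $A^{1/12}$ together with the worst-case $\frac12\mathbf B_2(1/2)=-1/24$ absorbed into $|q_{\theta_Q}|^{-1/(24a)}\ge A^{-1/(24D)}$) should emerge precisely from packaging the product estimates through Lemma \ref{ineq}(ii)--(iv).

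I expect the main obstacle to be step (2) in the borderline configuration where $\mathbf B_2(s/N)$ is near its maximum while simultaneously the translation factor $|1-q_z|$ is small — i.e. both $s$ and the reduced representative of $t$ are small relative to $N$. Controlling $|1-q_z|$ from below there requires using the smallest positive integer in the class of $t$ modulo $N$ together with $\gcd(w,N)=1$ on the numerator side, and feeding this into Lemma \ref{ineq}(i); getting the numerology to land exactly on the constant $2.16$ rather than something slightly larger is where the computation is tightest. A secondary technical nuisance is keeping careful track of the case $p\mid a$ in the definition $(\ref{u1})$--$(\ref{u2})$ of $\beta_Q$, but since the statement here concerns only the intrinsic quotient of Siegel values at $\theta$ and $\theta_Q$ (not the Galois action), this does not actually enter, and the argument is purely a matter of $q$-expansion estimates governed by $(\ref{bound a})$ and $(\ref{N,d_K})$.
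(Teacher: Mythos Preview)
Your overall strategy is the paper's strategy: bound the quotient directly from the Fourier expansion $(\ref{FourierSiegel})$, control the leading $q$-power via $\mathbf{B}_2$, bound the factor $|1-q_z|$ from below by $1-A^{1/(ND)}$ uniformly (using Lemma~\ref{ineq}(i) for the $s\equiv0$ case), and absorb the infinite products with Lemma~\ref{ineq}(ii),(iv) and the bound $d_K\le-43$ to produce the constant $2.16$. So the architecture is right.

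However, your account of where the exponent $1/24$ comes from is wrong, and this is exactly the place where the hypothesis $a\ge2$ enters. The quotient of leading $q$-powers is $A^{\frac12(\mathbf{B}_2(0)-\frac1a\mathbf{B}_2(s/N))}$. Since $\mathbf{B}_2(s/N)\le\mathbf{B}_2(0)=1/6$ on $[0,1)$ and $a\ge2$, one has $\tfrac1a\mathbf{B}_2(s/N)\le\tfrac12\cdot\tfrac16$, hence the exponent is at least $\tfrac12(\tfrac16-\tfrac1{12})=\tfrac1{24}$. The worst case is $s\equiv0$ \emph{and} $a=2$; the value $\mathbf{B}_2(1/2)=-1/12$ that you invoke is the \emph{minimum} of $\mathbf{B}_2$ and makes the exponent larger, not smaller, so it is the favourable case, not the dangerous one. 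Once you see this, your anticipated ``main obstacle'' (small $|1-q_z|$ simultaneous with $\mathbf{B}_2$ near its maximum) dissolves: the paper simply bounds $|1-q_z|\ge 1-A^{1/(ND)}$ in every case, and that denominator is precisely what becomes $1-e^{-\sqrt3\pi/N}$ in condition $(\ref{N,d_K})$; no separate rescue from the $|1-\zeta_N^t|$ factor is needed, and no $\gcd$ considerations arise (the statement only assumes $w\notin N\mathbb{Z}$, not $\gcd(w,N)=1$, and one just uses $|1-\zeta_N^w|<2$). Also note the paper reduces to $0\le s\le N/2$ via Corollary~\ref{F_N} rather than $0\le s/N<1$; this is cosmetic since $\mathbf{B}_2$ is symmetric about $1/2$.
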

\begin{proof}
We may assume $0\leq s\leq\tfrac{N}{2}$ by Corollary \ref{F_N}. And,
we have $2\leq a\leq D$ by (\ref{bound a}) because $Q$ is a reduced
primitive positive definite quadratic form. From the definition
(\ref{FourierSiegel}) we obtain that
\begin{eqnarray*}
\bigg|\frac{g_{(0,~\frac{w}{N})}\big(\theta\big)}{g_{(\frac{s}{N},~\frac{t}{N})}(\theta_Q)}\bigg|
\leq
A^{\frac{1}{2}\big(\mathbf{B}_2(0)-\frac{1}{a}\mathbf{B}_2(\frac{s}{N})\big)}
\bigg|\frac{1-\zeta_N^w}{1-e^{2\pi
i(\frac{s}{N}\cdot\tfrac{-b+\sqrt{d_K}}{2a}+\frac{t}{N})}}\bigg|\prod_{n=1}^\infty
\frac{(1+A^n)^2}{(1-A^{\frac{1}{a}(n+\frac{s}{N})})(1-A^{\frac{1}{a}(n-\frac{s}{N})})}.
\end{eqnarray*}
Now we see from the fact $a\leq D$ and Lemma \ref{ineq}(i) that
$\big|1-\zeta_N^w\big|<2$ and
\begin{eqnarray*}
\big|1-e^{2\pi
i(\frac{s}{N}\cdot\tfrac{-b+\sqrt{d_K}}{2a}+\frac{t}{N})}\big|&\geq&
\left\{\begin{array}{lll}\big|1-\zeta_N^t\big| &\geq
\big|1-\zeta_N\big| & \textrm{if
$s=0$}\vspace{0.1cm}\\\big|1-A^\frac{s}{Na}\big| & \geq
\big|1-A^\frac{1}{ND}\big| &\textrm{if $s\neq0$}
\end{array}\right.\\
&\geq& 1-A^\frac{1}{ND}.
\end{eqnarray*}
Therefore we achieve that
\begin{eqnarray*}
&&\bigg|\frac{g_{(0,~\frac{w}{N})}\big(\theta\big)}{g_{(\frac{s}{N},~\frac{t}{N})}(\theta_Q)}\bigg|<
\frac{2A^{\frac{1}{2}\big(\mathbf{B}_2(0)-\frac{1}{2}\mathbf{B}_2(0)\big)}}{1-A^\frac{1}{ND}}\prod_{n=1}^\infty\frac{(1+A^{n})^2}
{(1-A^{\frac{n}{D}})(1-A^{\frac{1}{D}(n-\frac{1}{2})})}\\
&&\hspace{9cm}\textrm{by the facts $2\leq a\leq D$, $0\leq s\leq\tfrac{N}{2}$}\\
&\stackrel{}{<}&
\frac{2A^{\frac{1}{24}}}{1-A^\frac{1}{ND}}\prod_{n=1}^\infty
(1+A^{n})^2(1+A^{\frac{n}{1.03D}})
(1+A^{\frac{1}{1.03D}(n-\frac{1}{2})})\quad\textrm{by Lemma \ref{ineq}(ii)}\\
&\stackrel{}{<}&
\frac{2A^{\frac{1}{24}}}{1-A^\frac{1}{ND}}\prod_{n=1}^\infty
e^{2A^{n}+A^{\frac{n}{1.03D}}+A^{\frac{1}{1.03D}(n-\frac{1}{2})}}
\quad\textrm{by Lemma \ref{ineq}(iv)}\\
&\stackrel{}{=}&
\frac{2A^{\frac{1}{24}}}{1-A^\frac{1}{ND}}e^{\frac{2A}{1-A}+
\frac{A^{\frac{1}{1.03D}}+A^{\frac{1}{2.06D}}}{1-A^{\frac{1}{1.03D}}}}\leq
\frac{2e^{-\frac{\pi\sqrt{-d_K}}{24}}}{1-e^{-\frac{\sqrt{3}\pi}{N}}}
~e^{\frac{2e^{-\sqrt{43}\pi}}{1-e^{-\sqrt{43}\pi}}+
\frac{e^{-\frac{\sqrt{3}\pi}{1.03}}+e^{-\frac{\sqrt{3}\pi}{2.06}}}{1-e^{-\frac{\sqrt{3}\pi}{1.03}}}}
\quad\textrm{by the fact $d_K\leq-43$}\\
&\stackrel{}{<}&
\frac{2.16e^{-\frac{\pi\sqrt{-d_K}}{24}}}{1-e^{-\frac{\sqrt{3}\pi}{N}}}<1
\quad\textrm{by the condition (\ref{N,d_K})}.
\end{eqnarray*}
This proves the lemma.
\end{proof}

\begin{lemma}\label{newlemma2}
Assume the condition
\begin{equation}\label{condition2}
d_K\leq-43\quad\textrm{and}\quad2\leq N\leq\sqrt{-d_K}.
\end{equation}
Let $Q=X^2+bXY+cY^2$ be a reduced primitive positive definite
quadratic form of discriminant $d_K$. Then we get the inequality
\begin{equation*}
\bigg|\frac
{g_{(0,~\frac{w}{N})}(\theta)}{g_{(\frac{s}{N},~\frac{t}{N})}(\theta_Q)}\bigg|<
1
\end{equation*}
for $w\in\mathbb{Z}\setminus N\mathbb{Z}$ and
$(s,~t)\in\mathbb{Z}^2\setminus N\mathbb{Z}^2$ with
$s\not\equiv0\pmod{N}$.
\end{lemma}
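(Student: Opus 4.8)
The plan is to carry out, with $a=1$, the same kind of estimate that proves Lemma \ref{newlemma1}; the new ingredient is that the hypothesis $s\not\equiv0\pmod N$, together with the bound $N\leq\sqrt{-d_K}$, will supply the control over the leading $q$-power factor and over $|1-q_z|^{-1}$ that was automatic when $a\geq2$.

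First I would use Corollary \ref{F_N} to normalize so that $1\leq s\leq N/2$, and set $u=s/N$, so that $u\in[1/N,~1/2]$ and hence, since $N\leq\sqrt{-d_K}$, $u\geq1/\sqrt{-d_K}$. Then, proceeding exactly as in the proof of Lemma \ref{newlemma1} with $a=1$ --- now $\theta_Q=\frac{-b+\sqrt{d_K}}{2}$, so that $|q_{\theta_Q}|=A$ and $|q_z|=A^{u}$ for $z=u\theta_Q+t/N$ --- and using $\mathbf{B}_2(0)-\mathbf{B}_2(u)=u(1-u)$, $|1-\zeta_N^w|<2$ and $|1-q_z|\geq1-A^{u}$, I would arrive at
\begin{equation*}
\bigg|\frac{g_{(0,~\frac{w}{N})}(\theta)}{g_{(\frac{s}{N},~\frac{t}{N})}(\theta_Q)}\bigg|
\leq A^{\frac{1}{2}u(1-u)}\cdot\frac{2}{1-A^{u}}\cdot\prod_{n=1}^\infty\frac{(1+A^{n})^2}{(1-A^{n+u})(1-A^{n-u})}.
\end{equation*}

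Next I would bound the three factors on the right. Since $u\leq1/2$ gives $u(1-u)\geq u/2\geq\frac{1}{2\sqrt{-d_K}}$, we obtain $A^{\frac{1}{2}u(1-u)}\leq A^{1/(4\sqrt{-d_K})}=e^{-\pi/4}$; since $u\geq1/\sqrt{-d_K}$ gives $A^{u}\leq e^{-\pi}$, we obtain $\frac{2}{1-A^{u}}\leq\frac{2}{1-e^{-\pi}}$; and since $u\leq1/2$, the product is at most $\prod_{n\geq1}\frac{(1+A^n)^2}{(1-A^{n})(1-A^{n-1/2})}$, which --- just as in Lemma \ref{newlemma1}, via Lemma \ref{ineq}(iii) and (iv) --- is at most $\exp\big(\frac{2A}{1-A}+\frac{A^{1/1.03}+A^{1/2.06}}{1-A^{1/1.03}}\big)$. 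Because $d_K\leq-43$ forces $A\leq e^{-\pi\sqrt{43}}$, this last exponential exceeds $1$ only by a negligible amount, so the product of the three bounds is at most $\frac{2e^{-\pi/4}}{1-e^{-\pi}}$ times a quantity arbitrarily close to $1$, hence $<1$, which is the assertion.

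I do not expect a substantive obstacle: the argument is careful bookkeeping patterned on Lemma \ref{newlemma1}, the only genuinely new point being that $1\leq s$ and $N\leq\sqrt{-d_K}$ force $u\geq1/\sqrt{-d_K}$, which is what keeps $A^{u}$ away from $1$ and makes the $\mathbf{B}_2$-exponent usable. The one delicate step is the closing numerical inequality, since $\frac{2e^{-\pi/4}}{1-e^{-\pi}}\approx0.95$ is not far below $1$; should that margin feel uncomfortable, I would sharpen it by observing that $u\in[1/\sqrt{-d_K},~1/2]$ in fact gives $A^{\frac{1}{2}u(1-u)}\leq e^{-\frac{\pi}{2}(1-1/\sqrt{-d_K})}$, which is comfortably smaller, or by retaining the full $u$-dependence in the infinite product.
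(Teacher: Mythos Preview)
Your proposal is correct and follows essentially the same approach as the paper's own proof: normalize to $1\leq s\leq N/2$, bound the ratio via the Fourier expansion by $A^{\frac{1}{2}u(1-u)}\cdot\frac{2}{1-A^{u}}$ times the infinite product, control the product through Lemma~\ref{ineq}(iii),(iv), and finish with the numerical estimate $\tfrac{2e^{-\pi/4}}{1-e^{-\pi}}<1$ using $N\leq\sqrt{-d_K}$. The only cosmetic difference is that the paper substitutes the extremal value $s=1$ (so $u=1/N$) at the outset and then invokes $N\geq2$ and $N\leq\sqrt{-d_K}$, whereas you keep $u$ as a variable and bound via $u(1-u)\geq u/2\geq\tfrac{1}{2\sqrt{-d_K}}$; both routes land on the same final constants.
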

\begin{proof}
We may assume $1\leq s\leq\tfrac{N}{2}$ by Corollary \ref{F_N}. Then
we establish that
\begin{eqnarray*}
&&\bigg|\frac
{g_{(0,~\frac{w}{N})}(\theta)}{g_{(\frac{s}{N},~\frac{t}{N})}(\theta_Q)}\bigg|<
A^{\frac{1}{2}\big(\mathbf{B}_2(0)-\mathbf{B}_2(\frac{s}{N})\big)}
\bigg|\frac{1-\zeta_N^w}{1-A^\frac{s}{N}}\bigg|\prod_{n=1}^\infty\frac{(1+A^n)^2}{{(1-A^{n+\frac{s}{N}})}
{(1-A^{n-\frac{s}{N}})}}\quad\textrm{by (\ref{FourierSiegel})}\\
&<&
A^{\frac{1}{2}\big(\mathbf{B}_2(0)-\mathbf{B}_2(\frac{1}{N})\big)}
\frac{2}{1-A^\frac{1}{N}}\prod_{n=1}^\infty\frac{(1+A^n)^2}{{(1-A^{n})}
{(1-A^{n-\frac{1}{2}})}}\quad\textrm{by}~1\leq s\leq\tfrac{N}{2}\\
&<&
\frac{2A^{\frac{1}{2}(\frac{1}{N}-\frac{1}{N^2})}}{1-A^\frac{1}{N}}\prod_{n=1}^\infty
(1+A^n)^2(1+A^\frac{n}{1.03})(1+A^{\frac{1}{1.03}(n-\frac{1}{2})})\quad\textrm{by
Lemma \ref{ineq}(iii)}\\
&\stackrel{}{<}& \frac{2A^{\frac{1}{4N}}}{1-A^\frac{1}{N}}
\prod_{n=1}^\infty
e^{2A^{n}+A^\frac{n}{1.03}+A^{\frac{1}{1.03}(n-\frac{1}{2})}}\quad\textrm{by
the fact $N\geq2$ and Lemma \ref{ineq}(iv)}\\
&=&\frac{2A^{\frac{1}{4N}}}{1-A^\frac{1}{N}} e^{\frac{2A}{1-A}+
\frac{A^{\frac{1}{1.03}}+A^{\frac{1}{2.06}}}{1-A^{\frac{1}{1.03}}}}\leq
\frac{2e^{-\frac{\pi\sqrt{-d_K}}{4N}}}{1-e^\frac{-\pi\sqrt{-d_K}}{N}}
e^{\frac{2e^{-\sqrt{43}\pi}}{1-e^{-\sqrt{43}\pi}}+
\frac{e^{-\frac{\sqrt{43}\pi}{1.03}}+e^{-\frac{\sqrt{43}\pi}{2.06}}}{1-e^{-\frac{\sqrt{43}\pi}{1.03}}}}
\quad\textrm{by the fact $d_K\leq-43$}\\
&<&\frac{2.0001e^{-\frac{\pi\sqrt{-d_K}}{4N}}}{1-e^\frac{-\pi\sqrt{-d_K}}{N}}
\leq\frac{2.0001e^{-\frac{\pi}{4}}}{1-e^{-\pi}}<1\quad\textrm{by the
fact $N\leq\sqrt{-d_K}$},
\end{eqnarray*}
which proves the lemma.
\end{proof}

\begin{remark}
Observe that the condition (\ref{N,d_K}) is stronger than
(\ref{condition2}), namely
\begin{equation*}
\frac{-\sqrt{3}\pi}{\ln\big(1-2.16e^{-\frac{\pi\sqrt{-d_K}}{24}}\big)}<\sqrt{-d_K}.
\end{equation*}
\end{remark}

Now we are ready to prove our main theorem about primitive
generators of ring class fields over $K$.

\begin{theorem}\label{main}
Assume the condition (\ref{N,d_K}) and let $\mathcal{O}$ be the
order of conductor $N$ in $K$.
 Then the singular value
\begin{equation}\label{value}
\prod_{\tiny\begin{smallmatrix}1\leq
w\leq\frac{N}{2}\\\gcd(w,~N)=1\end{smallmatrix}}
g^{\frac{12N}{\gcd(6,~N)}}_{(0,~\frac{w}{N})}(\theta)
\end{equation}
generates $H_\mathcal{O}$ over $K$. It is a real algebraic integer
and its minimal polynomial has integer coefficients. In particular,
if the conductor $N$ has at least two prime factors, then it is a
unit.
\end{theorem}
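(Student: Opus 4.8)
Write $n=\tfrac{12N}{\gcd(6,N)}$ and $\mathfrak{g}(\tau)=\prod_{\substack{1\le w\le N/2\\ \gcd(w,N)=1}}g_{(0,\,w/N)}^{n}(\tau)$, so that $\mathfrak{g}(\theta)$ is the value in (\ref{value}). By Corollary \ref{F_N}, $\mathfrak{g}(\tau)\in\mathcal{F}_N$, and it is holomorphic and nowhere zero on $\mathfrak{H}$ since Siegel functions are; hence $\mathfrak{g}(\theta)$ is a well-defined nonzero element of $K_{(N)}$ by (\ref{K_N}). I plan to establish in turn: (a) $\mathfrak{g}(\theta)\in H_\mathcal{O}$; (b) the identity is the only element of $\mathrm{Gal}(H_\mathcal{O}/K)$ fixing $\mathfrak{g}(\theta)$, whence $K(\mathfrak{g}(\theta))=H_\mathcal{O}$; (c) $\mathfrak{g}(\theta)$ is real; (d) $\mathfrak{g}(\theta)$ is an algebraic integer, and is a unit when $N$ has two or more prime factors; (e) the minimal polynomial of $\mathfrak{g}(\theta)$ has integer coefficients.

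For (a): by (\ref{K_N/H}) and Theorem \ref{H_O/H}, $\mathrm{Gal}(K_{(N)}/H_\mathcal{O})$ corresponds to the classes in $W_{N,\theta}$ of the scalar matrices $\left(\begin{smallmatrix}t&0\\0&t\end{smallmatrix}\right)$ with $t\in(\mathbb{Z}/N\mathbb{Z})^*$. By Corollary \ref{F_N} such a matrix sends $g_{(0,\,w/N)}^{n}$ to $g_{(0,\,tw/N)}^{n}=g_{(0,\,w'/N)}^{n}$, where $w'$ is obtained by reducing $tw$ modulo $N$ and folding into $\{1,\dots,\lfloor N/2\rfloor\}$ via $g_r^{n}=g_{-r}^{n}=g_{(\langle r_1\rangle,\,\langle r_2\rangle)}^{n}$; since $t$ is a unit, $w\mapsto w'$ permutes the index set, so $\mathfrak{g}(\tau)$ is fixed by $\mathrm{Gal}(K_{(N)}/H_\mathcal{O})$ and $\mathfrak{g}(\theta)\in H_\mathcal{O}$. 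For (d): Proposition \ref{transformation}(i) makes each $g_{(0,\,w/N)}(\tau)$, hence $\mathfrak{g}(\tau)$, integral over $\mathbb{Z}[j(\tau)]$; specialising a monic relation at $\tau=\theta$ and using that $j(\theta)$ is an algebraic integer shows that $\mathfrak{g}(\theta)$ is an algebraic integer. If $N$ has at least two prime factors, then $\gcd(w,N)=1$ forces the least positive integer $N'$ with $N'(0,w/N)\in\mathbb{Z}^2$ to equal $N$, so Proposition \ref{transformation}(ii) makes $1/g_{(0,\,w/N)}(\tau)$ and hence $1/\mathfrak{g}(\tau)$ integral over $\mathbb{Z}[j(\tau)]$; specialising again, $1/\mathfrak{g}(\theta)$ is an algebraic integer, so $\mathfrak{g}(\theta)$ is a unit. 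For (c): from (\ref{FourierSiegel}), for $1\le w\le N/2$ we have $g_{(0,\,w/N)}(\tau)=2i\sin(\pi w/N)\,q_\tau^{1/12}\prod_{m\ge1}(1-\zeta_N^{w}q_\tau^{m})(1-\zeta_N^{-w}q_\tau^{m})$, and raising to the even power $n$ — note $n/12=N/\gcd(6,N)\in\mathbb{Z}$ — yields a genuine $q$-expansion with positive real leading constant $2^{n}\sin^{n}(\pi w/N)$; since $q_\theta=e^{2\pi i\theta}$ is real (positive if $d_K\equiv0\pmod4$, negative if $d_K\equiv1\pmod4$), complex conjugation merely exchanges the factors $1-\zeta_N^{\pm w}q_\theta^{m}$, so $g_{(0,\,w/N)}^{n}(\theta)$, and therefore $\mathfrak{g}(\theta)$, is real.

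The core is (b). By (a) and Theorem \ref{conjugate}, every $K$-conjugate of $\mathfrak{g}(\theta)$ has the form $\mathfrak{g}^{\gamma\beta_Q}(\theta_Q)=\prod_{\substack{1\le w\le N/2\\ \gcd(w,N)=1}}g_{(0,\,w/N)\gamma\beta_Q}^{n}(\theta_Q)$ with $\gamma\in W_{N,\theta}/\{\left(\begin{smallmatrix}t&0\\0&t\end{smallmatrix}\right):t\in(\mathbb{Z}/N\mathbb{Z})^*\}$ and $Q\in\mathrm{C}(d_K)$, the row $(0,w/N)\gamma\beta_Q$ being computed modulo $N$ via Corollary \ref{F_N} (this is legitimate because every $\beta_p\in\mathrm{GL}_2(\mathbb{Z}_p)$, so $\beta_Q$ reduces mod $N$ to an invertible matrix, and $(0,w/N)\gamma\beta_Q\notin\mathbb{Z}^2$). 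For the principal form $Q_0$ — the one reduced form of discriminant $d_K$ with $a=1$ — formulas (\ref{u1}), (\ref{u2}) give $\beta_{Q_0}=I$ and (\ref{theta_Q}) gives $\theta_{Q_0}=\theta$, so the pair $(\mathrm{id},Q_0)$ returns $\mathfrak{g}(\theta)$ itself. I will then check that every other pair gives a conjugate of strictly larger absolute value. If $Q\neq Q_0$, then $Q$ is reduced with $a\ge2$; writing $(0,w/N)\gamma\beta_Q=(s_w/N,t_w/N)$ with $(s_w,t_w)\not\equiv(0,0)\pmod N$, Lemma \ref{newlemma1} gives $|g_{(0,\,w/N)}(\theta)|<|g_{(s_w/N,\,t_w/N)}(\theta_Q)|$ for each $w$ in the index set, and raising to the $n$-th power and multiplying gives $|\mathfrak{g}(\theta)|<|\mathfrak{g}^{\gamma\beta_Q}(\theta_Q)|$. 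If $Q=Q_0$ but $\gamma=\left(\begin{smallmatrix}u-B_\theta s&-C_\theta s\\ s&u\end{smallmatrix}\right)$ has $s\not\equiv0\pmod N$, then $(0,w/N)\gamma=(ws/N,wu/N)$ with $ws\not\equiv0\pmod N$ (as $\gcd(w,N)=1$), and Lemma \ref{newlemma2} — applicable since (\ref{N,d_K}) implies (\ref{condition2}) — gives $|g_{(0,\,w/N)}(\theta)|<|g_{(ws/N,\,wu/N)}(\theta)|$ for each $w$, hence $|\mathfrak{g}(\theta)|<|\mathfrak{g}^{\gamma}(\theta)|$. Therefore $\mathfrak{g}(\theta)$ is the \emph{unique} conjugate of least absolute value, so any $\sigma\in\mathrm{Gal}(H_\mathcal{O}/K)$ fixing $\mathfrak{g}(\theta)$ corresponds to the pair $(\mathrm{id},Q_0)$, i.e. $\sigma=\mathrm{id}$; thus $K(\mathfrak{g}(\theta))=H_\mathcal{O}$. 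For (e): since $\mathfrak{g}(\theta)$ is real and generates $H_\mathcal{O}$ over $K$, the field $\mathbb{Q}(\mathfrak{g}(\theta))$ is real with $K\cap\mathbb{Q}(\mathfrak{g}(\theta))=\mathbb{Q}$, so $[\mathbb{Q}(\mathfrak{g}(\theta)):\mathbb{Q}]=[K(\mathfrak{g}(\theta)):K]=[H_\mathcal{O}:K]$; hence the minimal polynomial of $\mathfrak{g}(\theta)$ over $K$ coincides with its minimal polynomial over $\mathbb{Q}$, which, $\mathfrak{g}(\theta)$ being an algebraic integer, lies in $\mathbb{Z}[X]$.

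The step I expect to cost the most care is (b): correctly tracking through Corollary \ref{F_N} how $\gamma\beta_Q$ transforms each index $(0,w/N)$; verifying $\beta_{Q_0}=I$ and $\theta_{Q_0}=\theta$ so that $\mathfrak{g}(\theta)$ is recognised among its own conjugates and matched with the identity automorphism; and confirming that the first coordinate $ws$ (in the ``$\gamma$ non-scalar'' case) and the pairs $(s_w,t_w)$ (in the ``$a\ge2$'' case) are indeed nonzero modulo $N$, so that Lemmas \ref{newlemma1} and \ref{newlemma2} apply term by term. The reality of $\mathfrak{g}(\theta)$ and the passage to integer coefficients are comparatively routine once generation is proved.
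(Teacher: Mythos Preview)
Your proof is correct and follows essentially the same route as the paper: you establish $\mathfrak{g}(\theta)\in H_\mathcal{O}$ via invariance under scalar matrices (the paper phrases this as a norm from $K_{(N)}$ down to $H_\mathcal{O}$), then use Theorem~\ref{conjugate} together with Lemmas~\ref{newlemma1} and~\ref{newlemma2} exactly as the paper does to show that every nontrivial $(\gamma,Q)$ produces a conjugate of strictly larger absolute value, forcing $\sigma=\mathrm{id}$; the reality and integrality arguments are also the same, with only cosmetic differences in presentation. Your justification for (e) --- comparing degrees over $K$ and over $\mathbb{Q}$ using that $\mathfrak{g}(\theta)$ is real and $K$ is imaginary --- is actually more explicit than the paper's, which simply asserts the conclusion.
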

\begin{proof}
Let $g(\tau)=\prod_{\tiny\begin{smallmatrix}1\leq
w\leq\frac{N}{2}\\\gcd(w,~N)=1\end{smallmatrix}}
g^{\frac{12N}{\gcd(6,~N)}}_{(0,~\frac{w}{N})}(\tau)$. By
(\ref{K_N/H}) and Theorem \ref{H_O/H} we have
$\mathrm{Gal}(K_{(N)}/H_\mathcal{O})\cong
\big\{\left(\begin{smallmatrix}t&0\\0&t\end{smallmatrix}\right)~:~t\in(\mathbb{Z}/N\mathbb{Z})^*\big\}/
\big\{
\pm\left(\begin{smallmatrix}1&0\\0&1\end{smallmatrix}\right)\big\}$,
and hence
\begin{equation*}
g(\theta)=\prod_w
\bigg(g^{\frac{12N}{\gcd(6,~N)}}_{(0,~\frac{1}{N})}(\tau)\bigg)^{\left(\begin{smallmatrix}w&0\\0&w\end{smallmatrix}\right)}(\theta)
=\prod_w
\bigg(g^{\frac{12N}{\gcd(6,~N)}}_{(0,~\frac{1}{N})}(\theta)\bigg)^{\left(\begin{smallmatrix}w&0\\0&w\end{smallmatrix}\right)}
=\mathbf{N}_{K_{(N)}/H_\mathcal{O}}\bigg(g_{(0,~\frac{1}{N})}^\frac{12N}{\gcd(6,~N)}(\theta)\bigg)
\end{equation*}
by Corollary \ref{F_N} and (\ref{K_N/H}). Thus $g(\theta)$ belongs
to $H_\mathcal{O}$. Now, if we show that the element of
$\mathrm{Gal}(H_\mathcal{O}/K)$ fixing the value $g(\theta)$ is only
the identity, then we can conclude by Galois theory that it
generates $H_\mathcal{O}$ over $K$.
\par
It follows from Theorem
\ref{conjugate} that any conjugate of $g(\theta)$ is of the form
\begin{equation*}
g^{\gamma\cdot \beta_Q}(\theta_Q)
\end{equation*}
for some $\gamma=\left(\begin{smallmatrix}t-B_\theta s&-C_\theta
s\\s&t\end{smallmatrix}\right)\in W_{N,~\theta}$ and
$Q=aX^2+bXY+cY^2\in\mathrm{C}(d_K)$. Assuming
$g(\theta)=g^{\gamma\cdot \beta_Q}(\theta_Q)$ we derive
\begin{equation*}
\prod_w g^{\frac{12N}{\gcd(6,~N)}}_{(0,~\frac{w}{N})}(\theta)
=\prod_w g^{\frac{12N}{\gcd(6,~N)}}_{(0,~\frac{w}{N})\gamma
\beta_Q}(\theta_Q)
\end{equation*}
by Corollary \ref{F_N}. Since $|g(\theta)|=|g^{\gamma\cdot
\beta_Q}(\theta_Q)|$, Lemma \ref{newlemma1} leads us to have $a=1$.
This yields
\begin{eqnarray*}
Q=\mathrm{id}=\left\{\begin{array}{ll} X^2-\frac{d_K}{4}Y^2 &
\textrm{for}~d_K\equiv0\pmod{4}\\
X^2+XY+\frac{1-d_K}{4}Y^2 & \textrm{for}~
d_K\equiv1\pmod{4}\end{array}\right.
\end{eqnarray*}
from the condition (\ref{reduced}) and the relation (\ref{disc});
hence
$\beta_Q=\left(\begin{smallmatrix}1&0\\0&1\end{smallmatrix}\right)$
as an element of $\mathrm{GL}_2(\mathbb{Z}/N\mathbb{Z})$ by
definitions (\ref{u1}) and (\ref{u2}), and $\theta_Q=\theta$ by
definition (\ref{theta_Q}). And we see from Corollary \ref{F_N} that
\begin{eqnarray*}
g(\theta)=g^{\gamma\cdot \beta_Q}(\theta_Q)=\prod_w
g_{(0,~\frac{w}{N})\gamma \beta_Q}^\frac{12N}{\gcd(6,~N)}(\theta_Q)=
\prod_w
g_{(\frac{ws}{N},~\frac{wt}{N})}^\frac{12N}{\gcd(6,~N)}(\theta)
\end{eqnarray*}
from which we get $s\equiv0\pmod{N}$ by Lemma \ref{newlemma2}.
Therefore the pair of
$\gamma=\left(\begin{smallmatrix}t&0\\0&t\end{smallmatrix}\right)$
and $Q=\mathrm{id}$ represents the identity on $H_\mathcal{O}$ (see
the tower in the proof of Theorem \ref{conjugate}), and hence
$g(\theta)$ actually generates $H_\mathcal{O}$ over $K$.
\par
On the other hand, we derive from the definition
(\ref{FourierSiegel})
\begin{eqnarray*}
g(\theta)&=&\prod_w
\bigg\{q_\theta^{\frac{1}{12}}(1-\zeta_N^w)\prod_{n=1}^\infty(1-q_\theta^n\zeta_N^w)(1-q_\theta^n\zeta_N^{-w})\bigg\}^\frac{12N}
{\gcd(6,~N)}\\
&=&\prod_w\bigg\{ q_\theta^\frac{N}{\gcd(6,~N)}
\big(2\sin\tfrac{w\pi}{N}\big)^\frac{12N}{\gcd(6,~N)}\prod_{n=1}^\infty\big(1-2\cos\tfrac{2w\pi}{N}q_\theta^n+q_\theta^{2n}\big)^\frac{12N}{\gcd(6,~N)}\bigg\},
\end{eqnarray*}
and this claims that $g(\theta)$ is a real number. Furthermore, we
see from Proposition \ref{transformation}(i) that the function
$g(\tau)$ is integral over $\mathbb{Z}[j(\tau)]$. Since $j(\theta)$
is a real algebraic integer (\cite{Lang} or \cite{Shimura}), so is
the value $g(\theta)$. And its minimal polynomial over $K$ has
integer coefficients. In particular, if $N$ has at least two prime
factors, the function $1/g(\tau)$ is also integral over
$\mathbb{Z}[j(\tau)]$ by Proposition \ref{transformation}(ii); hence
$g(\theta)$ becomes a unit.
\end{proof}

\begin{remark}
Since the proof of Theorem \ref{main} depends only on Lemmas
\ref{newlemma1} and \ref{newlemma2} which do not include any power
of singular values, any nonzero power of the value in (\ref{value})
can be also a generator of $H_\mathcal{O}$ over $K$.
\end{remark}

\begin{remark}\label{example}
We would like to present an example which cannot be covered by our
method due to violation of the condition (\ref{N,d_K}).
\par
Let $K=\mathbb{Q}(\sqrt{-5})$ and $N=12(=2^2\cdot3)$. Then
$d_K=-20$, $\theta=\sqrt{-5}$ and
\begin{eqnarray*}
&&\mathrm{C}(d_K)=\big\{Q_1=X^2+5Y^2,\quad Q_2=2X^2+2XY+3Y^2\big\}\\
&&\theta_{Q_1}=\sqrt{-5},\quad\theta_{Q_2}=\tfrac{-1+\sqrt{-5}}{2}\\
&&\beta_{Q_1}=\left(\begin{smallmatrix}
1&0\\0&1\end{smallmatrix}\right),
\quad\beta_{Q_2}=\left(\begin{smallmatrix}1&5\\3&2\end{smallmatrix}\right)\\
&&W_{N,~\theta}/\big\{\left(\begin{smallmatrix}t&0\\0&t\end{smallmatrix}\right)~:~t\in(\mathbb{Z}/N\mathbb{Z})^*\big\}=
\big\{ \left(\begin{smallmatrix}1&0\\0&1\end{smallmatrix}\right),~
\left(\begin{smallmatrix}1&6\\6&1\end{smallmatrix}\right),~
\left(\begin{smallmatrix}2&9\\3&2\end{smallmatrix}\right),~
\left(\begin{smallmatrix}3&2\\2&3\end{smallmatrix}\right),~
\left(\begin{smallmatrix}3&4\\4&3\end{smallmatrix}\right),~
\left(\begin{smallmatrix}4&9\\3&4\end{smallmatrix}\right),~
\left(\begin{smallmatrix}6&7\\1&6\end{smallmatrix}\right),~
\left(\begin{smallmatrix}0&7\\1&0\end{smallmatrix}\right)\big\}.
\end{eqnarray*}
Now, the conjugates of
\begin{equation*}
x=\prod_{\tiny\begin{smallmatrix}1\leq
w\leq\frac{N}{2}\\\gcd(w,~N)=1\end{smallmatrix}}
g_{(0,~\frac{w}{N})}^\frac{12N}{\gcd(6,~N)}(\theta)=
g_{(0,~\frac{1}{12})}^{24}(\sqrt{-5})g_{(0,~\frac{5}{12})}^{24}(\sqrt{-5})
\end{equation*}
are as follows:
\begin{eqnarray*}
\begin{array}{llll}
x_{1\phantom{1}}=g_{(0,~\frac{1}{12})}^{24}(\sqrt{-5})g_{(0,~\frac{5}{12})}^{24}(\sqrt{-5}),
&
x_{2\phantom{1}}=g_{(\frac{6}{12},~\frac{1}{12})}^{24}(\sqrt{-5})g_{(\frac{6}{12},~\frac{5}{12})}^{24}(\sqrt{-5})\\
x_{3\phantom{1}}=g_{(\frac{3}{12},~\frac{2}{12})}^{24}(\sqrt{-5})g_{(\frac{3}{12},~\frac{10}{12})}^{24}(\sqrt{-5}),
&
x_{4\phantom{1}}=g_{(\frac{2}{12},~\frac{3}{12})}^{24}(\sqrt{-5})g_{(\frac{10}{12},~\frac{3}{12})}^{24}(\sqrt{-5})\\
x_{5\phantom{1}}=g_{(\frac{4}{12},~\frac{3}{12})}^{24}(\sqrt{-5})g_{(\frac{8}{12},~\frac{3}{12})}^{24}(\sqrt{-5}),
&
x_{6\phantom{1}}=g_{(\frac{3}{12},~\frac{4}{12})}^{24}(\sqrt{-5})g_{(\frac{3}{12},~\frac{8}{12})}^{24}(\sqrt{-5})\\
x_{7\phantom{1}}=g_{(\frac{1}{12},~\frac{6}{12})}^{24}(\sqrt{-5})g_{(\frac{5}{12},~\frac{6}{12})}^{24}(\sqrt{-5}),
&
x_{8\phantom{1}}=g_{(\frac{1}{12},~0)}^{24}(\sqrt{-5})g_{(\frac{5}{12},~0)}^{24}(\sqrt{-5})\\
x_{9\phantom{1}}=g_{(\frac{3}{12},~\frac{2}{12})}^{24}(\tfrac{-1+\sqrt{-5}}{2})g_{(\frac{3}{12},~\frac{10}{12})}^{24}(\tfrac{-1+\sqrt{-5}}{2}),
&
x_{10}=g_{(\frac{9}{12},~\frac{8}{12})}^{24}(\tfrac{-1+\sqrt{-5}}{2})g_{(\frac{9}{12},~\frac{4}{12})}^{24}(\tfrac{-1+\sqrt{-5}}{2})\\
x_{11}=g_{(\frac{9}{12},~\frac{7}{12})}^{24}(\tfrac{-1+\sqrt{-5}}{2})g_{(\frac{9}{12},~\frac{11}{12})}^{24}(\tfrac{-1+\sqrt{-5}}{2}),
&
x_{12}=g_{(\frac{11}{12},~\frac{4}{12})}^{24}(\tfrac{-1+\sqrt{-5}}{2})g_{(\frac{7}{12},~\frac{8}{12})}^{24}(\tfrac{-1+\sqrt{-5}}{2})\\
x_{13}=g_{(\frac{1}{12},~\frac{2}{12})}^{24}(\tfrac{-1+\sqrt{-5}}{2})g_{(\frac{5}{12},~\frac{10}{12})}^{24}(\tfrac{-1+\sqrt{-5}}{2}),
&
x_{14}=g_{(\frac{3}{12},~\frac{11}{12})}^{24}(\tfrac{-1+\sqrt{-5}}{2})g_{(\frac{3}{12},~\frac{7}{12})}^{24}(\tfrac{-1+\sqrt{-5}}{2})\\
x_{15}=g_{(\frac{7}{12},~\frac{5}{12})}^{24}(\tfrac{-1+\sqrt{-5}}{2})g_{(\frac{11}{12},~\frac{1}{12})}^{24}(\tfrac{-1+\sqrt{-5}}{2}),
&
x_{16}=g_{(\frac{1}{12},~\frac{5}{12})}^{24}(\tfrac{-1+\sqrt{-5}}{2})g_{(\frac{5}{12},~\frac{1}{12})}^{24}(\tfrac{-1+\sqrt{-5}}{2})
\end{array}
\end{eqnarray*}
possibly with multiplicity by Theorem \ref{conjugate} and Corollary
\ref{F_N}. Hence the minimal polynomial of $x$ over $K$ would be
{\small\begin{eqnarray*}
&&\quad(X-x_1)\quad\cdots\quad(X-x_{16})\\
&&=X^{16}-1597283771136X^{15}+218685334974106886200X^{14}-989798760399582851353280X^{13}\\
&&+1635793922011311753339695900X^{12}-1478170408753689677872738383488X^{11}\\
&&+813690304957218006590231416378248X^{10}-464728779160514526974626326247201600X^9\\
&&+167117715935951295057696524156063178310X^8-9155763998650223557795196487031471321600X^7\\
&&-17410059883612682120508988571419246981752X^6-31984181681760551803330979365226550023488X^5\\
&&+5677583625730635496464554293769775900X^4-2249102100642965467076167124913280X^3\\
&&+238110589893565910129238086200X^2-2550974942476760820051136X+1.
\end{eqnarray*}}
And this polynomial is irreducible over $K$, so $x$ is indeed a
primitive generator of the ring class field of the order of
conductor $12$ in $\mathbb{Q}(\sqrt{-5})$. Moreover, $x$ is a unit
because the constant term is $1$. Therefore, it would be worthwhile
to check how much further one can release from the condition
(\ref{N,d_K}). On the other hand, in the next section we will find
in a different way a ring class invariant as singular value of
certain quotient of the $\Delta$-function without the condition
(\ref{N,d_K}) when the conductor of the extension $H_\mathcal{O}/K$
is a prime power.
\end{remark}

From now on, we will investigate how the ring class invariant
(\ref{value}) is related to a normal basis of $H_\mathcal{O}$ over
$K$. Even though the following four lemmas were studied in
\cite{J-K-S}, we will present their proofs for the sake of
completeness. Let $F$ be a finite abelian extension of a number
field $L$ with
$G=\textrm{Gal}(F/L)=\{\gamma_1=\textrm{id},\cdots,\gamma_n\}$.

\begin{lemma}\label{det}
A set of elements $\{x_1,\cdots,x_n\}$ in $F$ is a $L$-basis of $F$
if and only if
\begin{equation*}
\det\big(x_k^{\gamma_\ell^{-1}}\big)_{1\leq k,~\ell\leq n}\neq0.
\end{equation*}
\end{lemma}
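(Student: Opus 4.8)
The plan is to reduce the statement to the classical fact that distinct field automorphisms are linearly independent (Dedekind's lemma on independence of characters). Since $F/L$ is a finite abelian, hence Galois, extension, we have $[F:L]=|G|=n$, so a set $\{x_1,\dots,x_n\}$ of $n$ elements of $F$ is an $L$-basis precisely when it is $L$-linearly independent; this is the reformulation I will compare with the non-vanishing of $D:=\det\big(x_k^{\gamma_\ell^{-1}}\big)_{1\le k,\ell\le n}$.

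For the implication ``$D\neq 0\Rightarrow$ basis'' I would argue by contraposition. If $\{x_1,\dots,x_n\}$ is not an $L$-basis, there are $c_1,\dots,c_n\in L$, not all zero, with $\sum_{k=1}^{n}c_kx_k=0$. Applying the automorphism $\gamma_\ell^{-1}$ and using $c_k^{\gamma_\ell^{-1}}=c_k$ (because $c_k\in L$) gives $\sum_{k=1}^{n}c_k\,x_k^{\gamma_\ell^{-1}}=0$ for every $\ell$; hence the nonzero row vector $(c_1,\dots,c_n)$ annihilates the matrix $\big(x_k^{\gamma_\ell^{-1}}\big)$ from the left, forcing $D=0$.

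For the converse ``basis $\Rightarrow D\neq 0$'' --- the step I expect to carry the real content --- suppose $\{x_1,\dots,x_n\}$ is an $L$-basis but $D=0$. Then the columns of $\big(x_k^{\gamma_\ell^{-1}}\big)$ are $F$-linearly dependent, so there are $a_1,\dots,a_n\in F$, not all zero, with $\sum_{\ell=1}^{n}a_\ell\,x_k^{\gamma_\ell^{-1}}=0$ for all $k$. The additive map $\varphi=\sum_{\ell=1}^{n}a_\ell\gamma_\ell^{-1}\colon F\to F$ is $L$-linear, since each $\gamma_\ell^{-1}$ fixes $L$ pointwise, and it vanishes on the $L$-basis $\{x_k\}$, hence $\varphi\equiv 0$ on $F$. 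But $\gamma_1^{-1},\dots,\gamma_n^{-1}$ are pairwise distinct automorphisms of $F$, so by Dedekind's theorem on the linear independence of characters they are $F$-linearly independent as functions on $F$ (equivalently on $F^\times$); therefore all $a_\ell=0$, a contradiction. This yields $D\neq 0$.

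I would close by remarking on the alternative route: one may instead use the identity relating $\big(x_k^{\gamma_\ell^{-1}}\big)$ to the trace form, namely $\det\big(\mathrm{Tr}_{F/L}(x_ix_j)\big)_{i,j}=D^2$, together with the nondegeneracy of the trace form of the separable extension $F/L$ and the fact that an $n$-element family has nonzero discriminant if and only if it is an $L$-basis; but the independence-of-characters argument above is more direct and self-contained. The only genuine obstacle is invoking Dedekind's lemma correctly, and no substantial computation is involved.
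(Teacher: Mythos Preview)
Your proof is correct; the paper itself simply writes ``Straightforward'' and gives no details, so there is nothing to compare against. The argument you supply via Dedekind's independence of characters is the standard way to flesh this out, and the alternative you mention through the trace form and discriminant is equally valid.
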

\begin{proof}
Straightforward.
\end{proof}

By $\widehat{G}$ we denote the character group of $G$. Then we have
the \textit{Frobenius determinant relation}:

\begin{lemma}\label{Frobenius}
If $f$ is any $\mathbb{C}$-valued function on $G$, then
\begin{equation*}
\prod_{\chi\in\widehat{G}}\sum_{1\leq k\leq
n\phantom{\widehat{`}}}{\chi}(\gamma_k^{-1})f(\gamma_k)=
\det\big(f(\gamma_k \gamma_\ell^{-1})\big)_{1\leq k,~\ell\leq n}.
\end{equation*}
\end{lemma}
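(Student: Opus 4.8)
The plan is to regard both sides as functions on the finite abelian group $G$ and to diagonalize the left-hand matrix $\big(f(\gamma_k\gamma_\ell^{-1})\big)$ by the discrete Fourier transform over $G$. Concretely, I would introduce the group algebra $\mathbb{C}[G]$ and the element $u=\sum_{1\leq k\leq n}f(\gamma_k)\gamma_k\in\mathbb{C}[G]$. The key observation is that the matrix $M=\big(f(\gamma_k\gamma_\ell^{-1})\big)_{1\leq k,\ell\leq n}$ is exactly the matrix of the linear map ``left multiplication by $u$'' (equivalently, the appropriate convolution operator) on $\mathbb{C}[G]$ with respect to the natural basis $\{\gamma_1,\dots,\gamma_n\}$. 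Indeed, $u\cdot\gamma_\ell=\sum_k f(\gamma_k)\gamma_k\gamma_\ell=\sum_m f(\gamma_m\gamma_\ell^{-1})\gamma_m$ after reindexing $m$ so that $\gamma_m=\gamma_k\gamma_\ell$, so the $(m,\ell)$ entry of this operator is $f(\gamma_m\gamma_\ell^{-1})$, which is $M$.

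Next I would diagonalize this operator. Since $G$ is abelian, $\mathbb{C}[G]$ decomposes as a direct sum of one-dimensional representations indexed by the characters $\chi\in\widehat{G}$; explicitly, the idempotents $e_\chi=\tfrac{1}{n}\sum_{k}\overline{\chi(\gamma_k)}\gamma_k$ form a basis of $\mathbb{C}[G]$ on which left multiplication by $\gamma$ acts by the scalar $\chi(\gamma)$. Hence $u$ acts on the line $\mathbb{C}e_\chi$ by the scalar
\begin{equation*}
\lambda_\chi=\sum_{1\leq k\leq n}f(\gamma_k)\chi(\gamma_k).
\end{equation*}
Therefore $\det M=\prod_{\chi\in\widehat{G}}\lambda_\chi=\prod_{\chi\in\widehat{G}}\sum_{1\leq k\leq n}\chi(\gamma_k)f(\gamma_k)$. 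Finally, since $\widehat{G}$ is a group, replacing each $\chi$ by $\chi^{-1}$ merely permutes the factors of the product, so this equals $\prod_{\chi\in\widehat{G}}\sum_{1\leq k\leq n}\chi(\gamma_k^{-1})f(\gamma_k)$, which is the asserted identity.

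The only genuine point requiring care — and the step I expect to be the main obstacle to a fully rigorous write-up — is the bookkeeping in identifying $M$ with the regular-representation operator: one must check that the reindexing $k\mapsto m$ with $\gamma_m=\gamma_k\gamma_\ell$ is a bijection of $\{1,\dots,n\}$ for each fixed $\ell$ (immediate, since right translation by $\gamma_\ell$ permutes $G$), and that the transpose/convention chosen matches the determinant on the right-hand side; since $\det M=\det M^{t}$, either convention gives the same answer. An alternative, more computational route avoids the group algebra entirely: form the $n\times n$ character matrix $P=\big(\chi_i(\gamma_j)\big)$, which is invertible because distinct characters are linearly independent (and $P\overline{P}^{t}=nI$ by orthogonality), and verify directly that $P M P^{-1}$ is the diagonal matrix $\mathrm{diag}(\lambda_{\chi_1},\dots,\lambda_{\chi_n})$; taking determinants then yields the result. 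I would present the group-algebra argument as the main proof since it is the cleanest, and perhaps remark on the character-matrix computation as the concrete shadow of it.
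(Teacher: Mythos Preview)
Your argument is correct: identifying the matrix $M=\big(f(\gamma_k\gamma_\ell^{-1})\big)$ with the regular representation of $u=\sum_k f(\gamma_k)\gamma_k\in\mathbb{C}[G]$, diagonalizing via the character idempotents, and then reindexing $\chi\mapsto\chi^{-1}$ in the product is the standard proof of the Frobenius determinant relation, and all the bookkeeping you flag (the bijectivity of right translation, the transpose issue) is handled properly.

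As for comparison with the paper: the paper does not give its own proof of this lemma at all, but simply cites Lang's \emph{Elliptic Functions}, Chapter 21, Theorem 5. So your proposal is not so much a different route as an actual proof where the paper defers to a reference. The argument you sketch is essentially the one found in Lang (and in most treatments), so there is no substantive divergence; you have simply filled in what the paper leaves to the literature.
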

\begin{proof}
See \cite{Lang} Chapter 21 Theorem 5.
\end{proof}

Combining Lemma \ref{det} and Lemma \ref{Frobenius} we derive the
following lemma:

\begin{lemma}\label{character}
The conjugates of an element $x\in F$ form a normal basis of $F$
over $L$ if and only if
\begin{equation*}
\sum_{1\leq k\leq n }{\chi}(\gamma_k^{-1})x^{\gamma_k}\neq0
\quad\textrm{for all $\chi\in\widehat{G}$}.
\end{equation*}
\end{lemma}
\begin{proof}
For an element $x\in F$, set $x_k=x^{\gamma_k}$ for $1\leq k\leq n$.
Then we get that
\begin{eqnarray*}
&&\textrm{the conjugates of $x$ form a normal basis of $F$ over
$L$}\\
&\Longleftrightarrow&\textrm{\{$x_1,\cdots,x_n\}$ is a $L$-basis of
$F$}~\textrm{by the definition of a normal basis}\\
&\Longleftrightarrow& \det\big(x_k^{\gamma_\ell^{-1}}\big)_{1\leq
k,~\ell\leq
n}\neq0\quad\textrm{by Lemma \ref{det}}\\
&\Longleftrightarrow& \sum_{1\leq k\leq
n}{\chi}(\gamma_k^{-1})x_k\neq0\quad\textrm{for all
$\chi\in\widehat{G}$ by Lemma \ref{Frobenius} with}~f(\gamma_k)=x_k.
\end{eqnarray*}
\end{proof}

Now we present a simple criterion which enables us to determine
whether the conjugates of an element $x\in F$ form a normal basis of
$F$ over $L$.

\begin{lemma}\label{criterion}
Assume that there exists an element $x\in F$ such that
\begin{equation}\label{smaller}
\bigg|\frac{x^{\gamma_k}}{x}\bigg|<1\quad\textrm{for $1<k\leq n$}.
\end{equation} Then the conjugates of a high power
of $x$ form a normal basis of $F$ over $L$.
\end{lemma}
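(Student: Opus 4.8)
The plan is to use Lemma \ref{character}: the conjugates of an element $y\in F$ form a normal basis of $F$ over $L$ precisely when $\sum_{k=1}^n \chi(\gamma_k^{-1})y^{\gamma_k}\neq 0$ for every $\chi\in\widehat{G}$. So I would fix a character $\chi$ and, for a positive integer $M$ to be chosen, consider $y=x^M$ and the sum $S_M(\chi)=\sum_{k=1}^n \chi(\gamma_k^{-1})(x^{\gamma_k})^M$. The idea is that the term $k=1$ dominates: dividing through by $x^M$, we must show $\chi(\mathrm{id})+\sum_{k=2}^n \chi(\gamma_k^{-1})(x^{\gamma_k}/x)^M\neq 0$.

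First I would note that by hypothesis \eqref{smaller} each ratio $x^{\gamma_k}/x$ with $k>1$ has absolute value strictly less than $1$; since $G$ is finite there is a single real number $\rho<1$ with $|x^{\gamma_k}/x|\leq\rho$ for all $k>1$. Then $\bigl|\sum_{k=2}^n \chi(\gamma_k^{-1})(x^{\gamma_k}/x)^M\bigr|\leq (n-1)\rho^M$, because each $|\chi(\gamma_k^{-1})|=1$. Choosing $M$ large enough that $(n-1)\rho^M<1$ — which is possible since $\rho<1$ and this choice is uniform in $\chi$ because there are only finitely many characters and the bound does not involve $\chi$ — we get that the displayed sum is nonzero (its distance from $\chi(\mathrm{id})$, a complex number of modulus $1$, is less than $1$). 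Hence $S_M(\chi)\neq 0$ for all $\chi\in\widehat{G}$, and Lemma \ref{character} applied to $y=x^M$ finishes the proof.

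There is essentially no hard step here; the only point requiring a word of care is that the power $M$ can be chosen independently of $\chi$, which follows at once from the finiteness of $\widehat{G}$ and the fact that the estimate $(n-1)\rho^M$ is character-free. One should also remark that $x\neq 0$ (it is a basis-type element, or one simply notes that if $x=0$ the hypothesis \eqref{smaller} is vacuous/meaningless, so implicitly $x\neq 0$), so that dividing by $x^M$ is legitimate. I would present the argument in three short movements: reduce to nonvanishing of $S_M(\chi)$ via Lemma \ref{character}; extract the uniform bound $\rho<1$ from \eqref{smaller}; and pick $M$ with $(n-1)\rho^M<1$ to conclude by the triangle inequality.
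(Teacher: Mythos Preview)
Your proposal is correct and follows essentially the same approach as the paper: both reduce to Lemma~\ref{character}, then use the triangle inequality to show that for a sufficiently high power the identity term dominates the remaining $n-1$ conjugate terms. The only cosmetic difference is that the paper chooses the exponent so that each ratio $|x^{\gamma_k}/x|^m\le 1/\#G$, whereas you first extract a uniform bound $\rho<1$ and then pick $M$ with $(n-1)\rho^M<1$; these are equivalent ways of arranging the same estimate.
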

\begin{proof}
By the hypothesis (\ref{smaller}) we can take a suitably large
integer $m$ such that
\begin{equation}\label{smaller2}
\bigg|\frac{x^{\gamma_k}}{x}\bigg|^m\leq\frac{1}{\#G}\quad\textrm{for
$1<k\leq n$}
\end{equation}
where $\#G$ is the cardinality of $G$. Then for $\chi\in\widehat{G}$
we have
\begin{eqnarray*}
\bigg|\sum_{1\leq k\leq
n}{\chi}(\gamma_k^{-1})(x^m)^{\gamma_k}\bigg| &\geq&
|x^m|\bigg(1-\sum_{1<k\leq
n}\bigg|\frac{(x^m)^{\gamma_k}}{x^m}\bigg|\bigg)\quad\textrm{by the
triangle inequality}\\
&\geq&|x^m|\bigg(1-\frac{1}{\#G}(\#G-1)\bigg)=\frac{|x^m|}{\#G}>0\quad\textrm{by
(\ref{smaller2}).}
\end{eqnarray*}
Therefore the conjugates of $x^m$ form a normal basis of $F$ over
$L$ by Lemma \ref{character}.
\end{proof}

\begin{theorem}\label{normal1}
Assume the condition (\ref{N,d_K}) and let $\mathcal{O}$ be the
order of conductor $N$ in $K$. Then the conjugates of a high power
of
\begin{equation}\label{value1}
\prod_{\tiny\begin{smallmatrix}1\leq
w\leq\frac{N}{2}\\\gcd(w,~N)=1\end{smallmatrix}}
g^{-\frac{12N}{\gcd(6,~N)}}_{(0,~\frac{w}{N})}(\theta)
\end{equation}
form a normal basis of $H_\mathcal{O}$ over $K$.
\end{theorem}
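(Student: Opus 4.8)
The plan is to obtain Theorem \ref{normal1} as a direct application of the criterion in Lemma \ref{criterion} to the element $x=1/g(\theta)$, the reciprocal of the ring class invariant of Theorem \ref{main}, where $g(\tau)=\prod_w g_{(0,~w/N)}^{12N/\gcd(6,N)}(\tau)$ with $w$ running over $1\le w\le N/2$ coprime to $N$, exactly as in the proof of that theorem. Since Siegel functions have no zeros on $\mathfrak{H}$ we have $g(\theta)\neq0$, and since $g(\theta)$ generates $H_\mathcal{O}$ over $K$ by Theorem \ref{main}, the element $x$ lies in $H_\mathcal{O}$ and its conjugates under $\mathrm{Gal}(H_\mathcal{O}/K)$ are precisely the reciprocals of the conjugates of $g(\theta)$ described in Theorem \ref{conjugate}. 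Hence it suffices to verify the hypothesis (\ref{smaller}) of Lemma \ref{criterion}, namely that $|x^\sigma/x|=|g(\theta)/g^\sigma(\theta)|<1$ for every $\sigma\in\mathrm{Gal}(H_\mathcal{O}/K)$ with $\sigma\neq\mathrm{id}$; the criterion then yields a normal basis of $H_\mathcal{O}$ over $K$ formed by the conjugates of a high power of $x$, equivalently of (\ref{value1}).

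Fix such a $\sigma$. By Theorem \ref{conjugate} and Corollary \ref{F_N} we may write $g^\sigma(\theta)=g^{\gamma\beta_Q}(\theta_Q)=\prod_w g_{(0,~w/N)\gamma\beta_Q}^{12N/\gcd(6,N)}(\theta_Q)$ for some $\gamma=\left(\begin{smallmatrix}t-B_\theta s&-C_\theta s\\ s&t\end{smallmatrix}\right)\in W_{N,~\theta}$ and $Q=aX^2+bXY+cY^2\in\mathrm{C}(d_K)$, so that
\[|g(\theta)/g^\sigma(\theta)|=\prod_w\bigl|g_{(0,~w/N)}(\theta)/g_{(0,~w/N)\gamma\beta_Q}(\theta_Q)\bigr|^{12N/\gcd(6,N)},\]
a product over $w$ coprime to $N$. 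Here $(0,w/N)=\tfrac{1}{N}(0,w)$ with $(0,w)\in\mathbb{Z}^2\setminus N\mathbb{Z}^2$, and since $\gamma\beta_Q$ is invertible modulo $N$ the vector $(0,w/N)\gamma\beta_Q$ equals $\tfrac{1}{N}(s_w,t_w)$ for some $(s_w,t_w)\in\mathbb{Z}^2\setminus N\mathbb{Z}^2$; thus each factor is of the shape estimated in Lemmas \ref{newlemma1} and \ref{newlemma2}. Because $g(\theta)$ generates $H_\mathcal{O}$ over $K$, the non-identity element $\sigma$ cannot be represented by a pair with $Q=\mathrm{id}$ and $\gamma$ in the class of $\left(\begin{smallmatrix}t&0\\0&t\end{smallmatrix}\right)$, so, exactly as in the proof of Theorem \ref{main}, one of the following holds.

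If $Q\neq\mathrm{id}$, then $a\geq2$, since for $d_K\leq-43$ the only reduced primitive positive definite form with leading coefficient $1$ is the principal one; hence Lemma \ref{newlemma1} shows that every factor above is $<1$. If $Q=\mathrm{id}$ but $\sigma\neq\mathrm{id}$, then $\theta_Q=\theta$, $\beta_Q=\left(\begin{smallmatrix}1&0\\0&1\end{smallmatrix}\right)$ and $s\not\equiv0\pmod{N}$, so $(0,w/N)\gamma\beta_Q=(ws/N,~wt/N)$ with $ws\not\equiv0\pmod{N}$ because $\gcd(w,N)=1$; since condition (\ref{N,d_K}) implies (\ref{condition2}), Lemma \ref{newlemma2} again shows that every factor above is $<1$. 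In both cases $|g(\theta)/g^\sigma(\theta)|<1$, which is (\ref{smaller}), and Lemma \ref{criterion} completes the proof. The heart of the argument, the analytic estimates, is already contained in Lemmas \ref{newlemma1} and \ref{newlemma2}, so no real obstacle remains; the only point demanding attention is the passage to the reciprocal $1/g(\theta)$, which is exactly what makes the distinguished conjugate the one of largest absolute value, as Lemma \ref{criterion} requires.
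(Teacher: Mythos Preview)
Your proof is correct and follows essentially the same approach as the paper: set $x$ equal to the value in (\ref{value1}), observe from the proof of Theorem~\ref{main} (via Lemmas~\ref{newlemma1} and~\ref{newlemma2}) that $|x^\sigma/x|<1$ for every nonidentity $\sigma\in\mathrm{Gal}(H_\mathcal{O}/K)$, and then apply Lemma~\ref{criterion}. The paper's proof is a two-line reference to Theorem~\ref{main} and Lemma~\ref{criterion}, so your version simply spells out the details already implicit there.
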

\begin{proof}
Let $x$ be the value in (\ref{value1}). We then see from the proof
of Theorem \ref{main} that $|x^\gamma/x|<1$ for all
$\gamma\neq\mathrm{id}\in\mathrm{Gal}(H_\mathcal{O}/K)$. Therefore,
the result follows from Lemma \ref{criterion}.
\end{proof}

\section{Generators of class fields with conductors of prime
power}\label{section5}

Let
\begin{equation}\label{Delta}
\Delta(\tau)=(2\pi
i)^{12}q_\tau\prod_{n=1}^\infty(1-q_\tau^n)^{24}\quad(\tau\in\mathfrak{H})
\end{equation}
be the \textit{$\Delta$-function} (or, discriminant function). In
this section we shall construct primitive generators of ring class
fields with conductor of prime power by utilizing singular values of
the $\Delta$-function.
\par
Throughout this section we let $K$ be an imaginary quadratic field
with discriminant $d_K$ and $\mathcal{O}_K=[\theta,~1]$ be its ring
of integers with $\theta\in\mathfrak{H}$. For a nonzero integral
ideal $\mathfrak{f}$ of $K$ we denote by $\mathrm{Cl}(\mathfrak{f})$
the ray class group of conductor $\mathfrak{f}$ and write $C_0$ for
its unit class. If $\mathfrak{f}\neq\mathcal{O}_K$ and
$C\in\mathrm{Cl}(\mathfrak{f})$, then we take an integral ideal
$\mathfrak{c}$ in $C$ so that
$\mathfrak{f}\mathfrak{c}^{-1}=[z_1,~z_2]$ with
$z=z_1/z_2\in\mathfrak{H}$. Now we define the
\textit{Siegel-Ramachandra invariant} by
\begin{equation*}
g_\mathfrak{f}(C)=g_{(\frac{a}{N},~\frac{b}{N})}^{12N}(z)
\end{equation*}
where $N$ is the smallest positive integer in $\mathfrak{f}$ and
$a,~b\in\mathbb{Z}$ such that $1=\frac{a}{N}z_1+\frac{b}{N}z_2$.
This value depends only on the class $C$ and belongs to the ray
class field $K_\mathfrak{f}$ modulo $\mathfrak{f}$ of $K$.
Furthermore, we have a well-known transformation formula
\begin{equation}\label{Artin}
g_\mathfrak{f}(C_1)^{\sigma(C_2)}=g_\mathfrak{f}(C_1C_2)
\end{equation}
for $C_1,~C_2\in\mathrm{Cl}(\mathfrak{f})$ where $\sigma$ is the
Artin map (\cite{K-L} Chapter 11 Section 1).
\par
Let $\chi$ be a character of $\mathrm{Cl}(\mathfrak{f})$. We then
denote by $\mathfrak{f}_\chi$ the conductor of $\chi$ and let
$\chi_0$ be the proper character of $\mathrm{Cl}(\mathfrak{f}_\chi)$
corresponding to $\chi$. For a nontrivial character $\chi$ of
$\mathrm{Cl}(\frak{f})$ with $\mathfrak{f}\neq\mathcal{O}_K$ we
define
\begin{eqnarray*}
S_\mathfrak{f}(\chi,~g_\mathfrak{f})
&=&\sum_{C\in\mathrm{Cl}(\mathfrak{f})}
\chi(C)\log|g_\mathfrak{f}(C)|\\
L_\mathfrak{f}(s,~\chi)&=&\sum_{\begin{smallmatrix}\mathfrak{a}\neq0~:~\textrm{integral
ideals}\\\gcd(\mathfrak{a},~\mathfrak{f})=\mathcal{O}_K\end{smallmatrix}}\frac{\chi(\mathfrak{a})}{\mathbf{N}_{K/\mathbb{Q}}(\mathfrak{a})^s}\quad(s\in\mathbb{C}).
\end{eqnarray*}
If $\mathfrak{f}_\chi\neq\mathcal{O}_K$, then we see from the second
Kronecker limit formula that
\begin{equation*}
L_{\mathfrak{f}_\chi}(1,~\chi_0)=T_0S_{\mathfrak{f}_\chi}(\overline{\chi}_0,~g_{\mathfrak{f}_\chi})
\end{equation*}
where $T_0$ is a nonzero constant depending on $\chi_0$ (\cite{Lang}
Chapter 22 Theorem 2). Here we observe that the value
$L_{\mathfrak{f}_\chi}(1,~\chi_0)$ is nonzero (\cite{Janusz} Chapter
IV Proposition 5.7). Moreover, multiplying the above relation by the
Euler factors we derive the identity
\begin{equation}\label{relation}
\prod_{\mathfrak{p}|\mathfrak{f},~
\mathfrak{p}\nmid\mathfrak{f}_\chi}\big(1-\overline{\chi}_0(\mathfrak{p})\big)L_{\mathfrak{f}_\chi}(1,~\chi_0)
=TS_\mathfrak{f}(\overline{\chi},~g_\mathfrak{f})
\end{equation}
where $T$ is a nonzero constant depending on $\mathfrak{f}$ and
$\chi$ (\cite{K-L} p. 244).

\begin{theorem}\label{generator}
Let $L$ be an abelian extension of $K$ with $[L:K]>2h_K$ where $h_K$
is the class number of $K$. Assume that the conductor of the
extension $L/K$ is a power of prime ideal, namely
$\mathfrak{f}=\mathfrak{p}^n$ ($n\geq1$). Then the value
\begin{equation*}
\varepsilon=\mathbf{N}_{K_\mathfrak{f}/L}\bigg(g_\mathfrak{f}(C_0)\bigg)
\end{equation*}
generates $L$ over $K$.
\end{theorem}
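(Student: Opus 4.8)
The plan is to use Galois theory: since $L/K$ is abelian, it suffices to show that the only element of $\mathrm{Gal}(L/K)$ fixing $\varepsilon$ is the identity. Equivalently, letting $\mathrm{Cl}(\mathfrak{f})\twoheadrightarrow\mathrm{Gal}(L/K)$ be the Artin surjection with kernel $\mathfrak{H}$, I must show that no nontrivial character $\psi$ of $\mathrm{Gal}(L/K)$ — viewed as a character $\chi$ of $\mathrm{Cl}(\mathfrak{f})$ trivial on $\mathfrak{H}$ — can have $\chi$ fix $\varepsilon$; more precisely I will show $\varepsilon$ has full degree $[L:K]$ over $K$ by a nonvanishing argument.

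First I would rewrite $\varepsilon$ in terms of the Siegel--Ramachandra invariants. By the norm-as-sum-over-a-coset description and the transformation formula (\ref{Artin}), $\log|\varepsilon| = \sum_{C\in\mathfrak{H}}\log|g_\mathfrak{f}(C)|$, and more generally the conjugate $\varepsilon^{\sigma(C_2)}$ satisfies $\log|\varepsilon^{\sigma(C_2)}| = \sum_{C\in C_2\mathfrak{H}}\log|g_\mathfrak{f}(C)|$. Then I would test whether a nontrivial character $\psi$ of $\mathrm{Gal}(L/K)$ annihilates the corresponding Gauss-type sum: compute $\sum_{\bar C\in\mathrm{Gal}(L/K)}\psi(\bar C)\log|\varepsilon^{\sigma(\bar C)}|$ and, collapsing the inner sum over $\mathfrak{H}$, identify it up to the nonzero factor $\#\mathfrak{H}$ with $S_\mathfrak{f}(\overline{\chi},g_\mathfrak{f})$ for the lift $\chi$ of $\psi$ (which is nontrivial on $\mathrm{Cl}(\mathfrak{f})$ precisely because $\psi$ is nontrivial).

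Next I would invoke the second Kronecker limit formula in the packaged form (\ref{relation}): $S_\mathfrak{f}(\overline{\chi},g_\mathfrak{f})$ equals a nonzero constant times $\prod_{\mathfrak{p}\mid\mathfrak{f},\ \mathfrak{p}\nmid\mathfrak{f}_\chi}(1-\overline{\chi}_0(\mathfrak{p}))\,L_{\mathfrak{f}_\chi}(1,\chi_0)$, and the $L$-value is nonzero. Here is where the prime-power hypothesis $\mathfrak{f}=\mathfrak{p}^n$ enters decisively: the only prime dividing $\mathfrak{f}$ is $\mathfrak{p}$, so the Euler-factor product is empty unless $\mathfrak{f}_\chi$ is a strictly smaller power of $\mathfrak{p}$, in which case it is the single factor $(1-\overline{\chi}_0(\mathfrak{p}))$ — and $\overline{\chi}_0(\mathfrak{p})=0$ since $\mathfrak{p}\mid\mathfrak{f}_\chi$, making the factor equal to $1$, not $0$. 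Either way the product is nonzero, hence $S_\mathfrak{f}(\overline{\chi},g_\mathfrak{f})\neq0$ for every nontrivial $\chi$. Therefore $\sum_{\bar C}\psi(\bar C)\log|\varepsilon^{\sigma(\bar C)}|\neq 0$ for all nontrivial $\psi\in\widehat{\mathrm{Gal}(L/K)}$.

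Finally I would convert this nonvanishing into the generation statement. If some $\sigma(\bar C_2)\neq\mathrm{id}$ fixed $\varepsilon$, then $|\varepsilon^{\sigma(\bar C)}|$ would be constant on cosets of the cyclic subgroup generated by $\bar C_2$, forcing $\sum_{\bar C}\psi(\bar C)\log|\varepsilon^{\sigma(\bar C)}|=0$ for any $\psi$ nontrivial on that subgroup — contradicting the previous paragraph. (The hypothesis $[L:K]>2h_K$ guarantees $\mathfrak{f}\neq\mathcal{O}_K$ and $L\not\subset H$, so that there genuinely is a nontrivial $\chi$ with $\mathfrak{f}_\chi\neq\mathcal{O}_K$ to which the limit formula applies, and it rules out the degenerate small-degree cases where the $\log|\cdot|$ test could fail for trivial reasons.) Hence the stabilizer of $\varepsilon$ in $\mathrm{Gal}(L/K)$ is trivial and $L=K(\varepsilon)$. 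The main obstacle I anticipate is the bookkeeping in the first step — matching the norm down from $K_\mathfrak{f}$ to $L$ with a clean coset sum over $\mathfrak{H}$ and checking that the character $\chi$ obtained by inflation is nontrivial on all of $\mathrm{Cl}(\mathfrak{f})$ — together with handling characters $\chi$ whose conductor $\mathfrak{f}_\chi$ is the trivial ideal, i.e. characters factoring through $\mathrm{Cl}(\mathcal{O}_K)=\mathrm{Gal}(H/K)$; the condition $[L:K]>2h_K$ is exactly what is needed to separate $\varepsilon$ from elements of $H$ and make the argument go through for those characters as well.
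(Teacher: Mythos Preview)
Your overall strategy---compute $S_\mathfrak{f}(\overline{\chi},g_\mathfrak{f})$ in two ways and obtain a contradiction via the second Kronecker limit formula---is exactly the paper's. However, two steps in your execution are wrong or incomplete.

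First, your Euler-factor analysis is backwards. The product in (\ref{relation}) runs over primes $\mathfrak{q}$ with $\mathfrak{q}\mid\mathfrak{f}$ and $\mathfrak{q}\nmid\mathfrak{f}_\chi$. With $\mathfrak{f}=\mathfrak{p}^n$ this product is \emph{empty} whenever $\mathfrak{f}_\chi=\mathfrak{p}^m$ with $1\le m\le n$, not when $m=n$ only; it is the single factor $(1-\overline{\chi}_0(\mathfrak{p}))$ precisely when $\mathfrak{f}_\chi=\mathcal{O}_K$. In that last case $\mathfrak{p}\nmid\mathfrak{f}_\chi$, so $\overline{\chi}_0(\mathfrak{p})$ is a root of unity (not $0$) and the factor may well vanish. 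Moreover (\ref{relation}) is stated only for $\mathfrak{f}_\chi\neq\mathcal{O}_K$. So your claim that $S_\mathfrak{f}(\overline{\chi},g_\mathfrak{f})\neq0$ for \emph{every} nontrivial $\chi$ is not established; it is only established for those $\chi$ with $\mathfrak{f}_\chi\neq\mathcal{O}_K$ (and for those, by the correct reasoning that the product is empty).

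Second, and this is the real gap, you need to \emph{produce} a character with nontrivial conductor among those that detect the failure. The paper does this by a counting argument you only gesture at: set $F=K(\varepsilon)$ and suppose $F\subsetneq L$. The characters of $\mathrm{Cl}(\mathfrak{f})$ that are trivial on $\mathrm{Gal}(K_\mathfrak{f}/L)$ but nontrivial on $\mathrm{Gal}(K_\mathfrak{f}/F)$ number $[L:K]-[F:K]\ge[L:K](1-\tfrac{1}{2})>h_K$, while only $h_K$ characters of $\mathrm{Cl}(\mathfrak{f})$ have $\mathfrak{f}_\chi=\mathcal{O}_K$. Hence some character $\psi$ in the first set has $\mathfrak{f}_\psi=\mathfrak{p}^m$ with $m\ge1$, and for \emph{this} $\psi$ one has $S_\mathfrak{f}(\overline{\psi},g_\mathfrak{f})\neq0$ by Kronecker, yet the direct computation (your coset-collapsing argument, done with the tower $K_\mathfrak{f}\supset L\supset F$) gives $S_\mathfrak{f}(\overline{\psi},g_\mathfrak{f})=0$. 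This is precisely how $[L:K]>2h_K$ enters; it is not merely to rule out $L\subset H$.
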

\begin{proof}
We identify $\mathrm{Gal}(K_\mathfrak{f}/K)$ with
$\mathrm{Cl}(\mathfrak{f})$ via the Artin map. Letting
$F=K(\varepsilon)$ we deduce
\begin{equation}\label{number1}
\#\big\{\textrm{characters}~\chi~\mathrm{of}~\mathrm{Cl}(\mathfrak{f})~:~
\chi|_{\mathrm{Gal}(K_\mathfrak{f}/L)}=1~\textrm{and}~
\chi|_{\mathrm{Gal}(K_\mathfrak{f}/F)}\neq1\big\}=[L:K]-[F:K].
\end{equation}
Furthermore, if we let $H$ be the Hilbert class field of $K$, then
we have
\begin{equation}\label{number2}
\#\big\{\textrm{characters}~\chi~\mathrm{of}~\mathrm{Cl}(\mathfrak{f})~:~\mathfrak{f}_\chi=\mathcal{O}_K\big\}=
\#\big\{\chi~:~\chi|_{\mathrm{Gal}(K_\mathfrak{f}/H)}=1\big\}=h_K.
\end{equation}
Suppose that $F$ is properly contained in $L$. Then we deduce
\begin{equation*}
[L:K]-[F:K]=[L:K]\bigg(1-\frac{1}{[L:F]}\bigg)>2h_K\bigg(1-\frac{1}{2}\bigg)=h_K
\end{equation*}
by the hypothesis $[L:K]>2h_K$. Thus there exists a character $\psi$
of $\mathrm{Cl}(\mathfrak{f})$ such that
\begin{equation*}
\psi|_{\mathrm{Gal}(K_\mathfrak{f}/L)}=1,\quad
\psi|_{\mathrm{Gal}(K_\mathfrak{f}/F)}\neq1\quad\textrm{and}\quad
\mathfrak{f}_\psi\neq\mathcal{O}_K
\end{equation*}
by (\ref{number1}) and (\ref{number2}). Moreover, since
$\mathfrak{f}=\mathfrak{p}^n$, we get
$\mathfrak{f}_\psi=\mathfrak{p}^m$ for some $1\leq m\leq n$. Hence
we obtain by (\ref{relation}) that
\begin{equation*}
0\neq
L_{\mathfrak{f}_\psi}(1,~\psi_0)=TS_\mathfrak{f}(\overline{\psi},~g_\mathfrak{f})
\end{equation*}
for a nonzero constant $T$ and the proper character $\psi_0$ of
$\mathrm{Cl}(\mathfrak{f}_\psi)$ corresponding to $\psi$. On the
other hand, we get that
\begin{eqnarray*}
S_\mathfrak{f}(\overline{\psi},~g_\mathfrak{f})
&=&\sum_{C\in\mathrm{Cl}(\mathfrak{f})}\overline{\psi}(C)\log|g_\mathfrak{f}(C)|\\
&=&\sum_{\begin{smallmatrix}C_1\in\mathrm{Cl}(\mathfrak{f})\\C_1\hspace{-0.2cm}
\mod{\mathrm{Gal}(K_\mathfrak{f}/F)}\end{smallmatrix}}
\sum_{\begin{smallmatrix}C_2\in\mathrm{Gal}(K_\mathfrak{f}/F)\\
C_2\hspace{-0.2cm}\mod{\mathrm{Gal}(K_\mathfrak{f}/L)}\end{smallmatrix}}
\sum_{C_3\in\mathrm{Gal}(K_\mathfrak{f}/L)}
\overline{\psi}(C_1C_2C_3)\log|g_\mathfrak{f}(C_1C_2C_3)|\\
&=&\sum_{C_1}\overline{\psi}(C_1)\sum_{C_2}\overline{\psi}(C_2)\log|\varepsilon^{\sigma(C_1C_2)}|
\quad\textrm{by the fact}~\psi|_{\mathrm{Gal}(K_\mathfrak{f}/L)}=1~\textrm{and}~(\ref{Artin})\\
&=&\sum_{C_1}\overline{\psi}(C_1)\bigg(\sum_{C_2}\overline{\psi}(C_2)\bigg)\log|\varepsilon^{\sigma(C_1)}|\quad\textrm{by the fact}~\varepsilon\in F\\
&=&0\quad\textrm{by the
fact}~\psi|_{\mathrm{Gal}(K_\mathfrak{f}/F)}\neq1,
\end{eqnarray*}
which is a contradiction. Therefore $L=F$ as desired.
\end{proof}

\begin{remark}
Schertz achieved in \cite{Schertz2} a similar result for generators
of the ray class fields. However, there seems to be some defect in
his argument. For instance, in the proof of \cite{Schertz2} Lemma 1
he claimed that the conductor of a nontrivial character of
$\mathrm{Cl}(\mathfrak{p}^n)$ is nontrivial. But one can see that
his argument could be false if $h_K\geq2$ because in this case the
conductor of a character of $\mathrm{Cl}(\mathfrak{p}^n)$ induced
from one of $\mathrm{Cl}(\mathcal{O}_K)$ is obviously trivial.
\end{remark}

We apply this theorem to obtain ring class invariants in terms of
singular values of the $\Delta$-function. To this end we are in need
of certain relation between Siegel functions and the
$\Delta$-function.

\begin{lemma}\label{StoD}
Let $N\geq1$. Then we have the relation
\begin{equation*}
\prod_{w=1}^{N-1}g_{(0,~\frac{w}{N})}^{12}(\tau)=N^{12}\frac{\Delta(N\tau)}{\Delta(\tau)}
\end{equation*}
where the left hand side is understood to be $1$ when $N=1$.
\end{lemma}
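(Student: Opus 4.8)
The plan is to prove the identity by a direct computation from the Fourier expansion (\ref{FourierSiegel}), reducing everything to two elementary facts about $N$-th roots of unity. First I would specialize (\ref{FourierSiegel}) to $r=(0,~w/N)$ with $1\leq w\leq N-1$: then $z=w/N$, so $q_z=\zeta_N^w$, and $\mathbf{B}_2(0)=1/6$. Raising to the twelfth power removes the leading sign, and one obtains
\[
g_{(0,~\frac{w}{N})}^{12}(\tau)=q_\tau\,e^{-12\pi iw/N}(1-\zeta_N^w)^{12}\prod_{n=1}^{\infty}(1-q_\tau^n\zeta_N^w)^{12}(1-q_\tau^n\zeta_N^{-w})^{12}.
\]

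Next I would multiply over $w=1,\dots,N-1$ and handle the four resulting factors in turn. The powers of $q_\tau$ combine to $q_\tau^{N-1}$. The root-of-unity prefactor gives $\prod_{w=1}^{N-1}e^{-12\pi iw/N}=\exp\!\big(-6\pi i(N-1)\big)=1$. For the constant I would use the cyclotomic identity $\prod_{w=1}^{N-1}(X-\zeta_N^w)=(X^N-1)/(X-1)$ evaluated at $X=1$, giving $\prod_{w=1}^{N-1}(1-\zeta_N^w)=N$, hence an overall factor $N^{12}$. For the infinite product I would use $\prod_{w=0}^{N-1}(1-a\zeta_N^w)=1-a^N$, which is immediate from $\prod_{w=0}^{N-1}(X-\zeta_N^w)=X^N-1$ upon substituting $X=a^{-1}$ and clearing denominators; applying it with $a=q_\tau^n$, and noting that $\{\zeta_N^{-w}\}_{w=1}^{N-1}=\{\zeta_N^{w}\}_{w=1}^{N-1}$ as sets, each of the two products $\prod_{w=1}^{N-1}(1-q_\tau^n\zeta_N^{\pm w})$ equals $(1-q_\tau^{nN})/(1-q_\tau^n)$. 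Assembling these pieces yields
\[
\prod_{w=1}^{N-1}g_{(0,~\frac{w}{N})}^{12}(\tau)=N^{12}\,q_\tau^{N-1}\prod_{n=1}^{\infty}\bigg(\frac{1-q_\tau^{nN}}{1-q_\tau^n}\bigg)^{24}.
\]

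Finally I would compare with the product formula (\ref{Delta}). Since $e^{2\pi iN\tau}=q_\tau^N$, one has $\Delta(N\tau)=(2\pi i)^{12}q_\tau^N\prod_{n=1}^{\infty}(1-q_\tau^{nN})^{24}$, so that $\Delta(N\tau)/\Delta(\tau)=q_\tau^{N-1}\prod_{n=1}^{\infty}\big((1-q_\tau^{nN})/(1-q_\tau^n)\big)^{24}$, which is exactly the non-constant part of the previous display; this gives the claimed equality. The case $N=1$ reduces to the trivial identity $1=1$. There is no genuine obstacle in this argument; the only step requiring a moment's care is checking that the prefactor $\prod_{w=1}^{N-1}e^{-12\pi iw/N}$ equals $1$ and not some other root of unity, which follows from $\sum_{w=1}^{N-1}w=N(N-1)/2$ together with $e^{-6\pi i}=1$.
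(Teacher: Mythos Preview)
Your proof is correct and follows essentially the same route as the paper: both expand the product using the Fourier expansion (\ref{FourierSiegel}), apply the cyclotomic identity $\prod_{w=1}^{N-1}(1-\zeta_N^w X)=(1-X^N)/(1-X)$ to collapse the factors, and then recognize the result as $N^{12}\Delta(N\tau)/\Delta(\tau)$ via (\ref{Delta}). Your write-up is in fact slightly more explicit about the root-of-unity prefactor and the handling of the sign from $(-1)^{12}$, but there is no substantive difference.
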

\begin{proof}
Note the identity
\begin{equation}\label{identity}
\frac{1-X^N}{1-X}=1+X+\cdots+X^{N-1}=\prod_{w=1}^{N-1}(1-e^{\frac{2\pi
iw}{N}}X).
\end{equation}
We then derive for $N\geq2$ that
\begin{eqnarray*}
\prod_{w=1}^{N-1}g_{(0,~\frac{w}{N})}^{12}(\tau)&=&
\prod_{w=1}^{N-1}\bigg(q_\tau^\frac{1}{12} e^{-\frac{\pi
iw}{N}}(1-e^{\frac{2\pi
iw}{N}})\prod_{n=1}^\infty(1-q_\tau^ne^{\frac{2\pi
iw}{N}})(1-q_\tau^ne^{-\frac{2\pi iw}{N}})\bigg)^{12}\quad\textrm{by
(\ref{FourierSiegel})}\\
&=&q_\tau^{N-1}N^{12}\prod_{n=1}^\infty\bigg(\frac{1-q_\tau^{Nn}}{1-q_\tau^n}\bigg)^{24}
\quad\textrm{by the identity (\ref{identity})}\\
&=&N^{12}\frac{\Delta(N\tau)}{\Delta(\tau)}\quad\textrm{by
definition (\ref{Delta})}.
\end{eqnarray*}
\end{proof}

\begin{theorem}\label{maindelta}
Let $K$ be an imaginary quadratic field other than
$\mathbb{Q}(\sqrt{-1})$ and $\mathbb{Q}(\sqrt{-3})$. For a prime $p$
which is inert or ramified in $K/\mathbb{Q}$, let
$\mathcal{O}=[p^\ell\theta,~1]$ ($\ell\geq1$). Then the real
algebraic integer
\begin{equation}\label{Deltasingular}
p^{12}\frac{\Delta(p^\ell\theta)}{\Delta(p^{\ell-1}\theta)}
\end{equation}
generates $H_\mathcal{O}$ over $K$.
\end{theorem}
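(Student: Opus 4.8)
The plan is to apply Theorem \ref{generator} with $L=H_\mathcal{O}$, where $\mathcal{O}=[p^\ell\theta,~1]$ is the order of conductor $p^\ell$ in $K$, and then translate the resulting Siegel-Ramachandra invariant into the $\Delta$-quotient via Lemma \ref{StoD}. First I would record the structure of the ray class field containing $H_\mathcal{O}$: since $p$ is inert or ramified, the ideal $\mathfrak{f}=p^\ell\mathcal{O}_K$ has $p^\ell$ as its smallest positive integer, and the ray class field $K_\mathfrak{f}=K_{(p^\ell)}$ contains $H_\mathcal{O}$ (indeed $H_\mathcal{O}=K(j(p^\ell\theta))$ lies between $H$ and $K_{(p^\ell)}$). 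The conductor of the extension $H_\mathcal{O}/K$ divides $\mathfrak{f}=p^\ell\mathcal{O}_K$, hence is a prime ideal power, so the conductor hypothesis of Theorem \ref{generator} is met; one should also remark that the primes $p=2,3$ causing small-unit trouble are excluded precisely by $K\neq\mathbb{Q}(\sqrt{-1}),\mathbb{Q}(\sqrt{-3})$ together with $d_K$ being a discriminant.

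Next I would verify the degree inequality $[H_\mathcal{O}:K]>2h_K$. Using the formula from Remark \ref{remark3.3}, $[H_\mathcal{O}:H]=\tfrac{p^\ell}{[\mathcal{O}_K^*:\mathcal{O}^*]}\bigl(1-\bigl(\tfrac{d_K}{p}\bigr)\tfrac1p\bigr)$; since $p$ is inert or ramified, $\bigl(\tfrac{d_K}{p}\bigr)\in\{-1,0\}$, so this index is at least $p^\ell\bigl(1-\tfrac1p\bigr)\geq p^{\ell-1}(p-1)$, which for $p\geq2$, $\ell\geq1$ is $\geq2$ unless $p=2,\ell=1$. Multiplying by $h_K=[H:K]$ gives $[H_\mathcal{O}:K]\geq 2h_K$, and the strict inequality holds except possibly in the degenerate case $p=2$, $\ell=1$, $d_K$ such that $2$ is ramified; I would handle that borderline case separately (or note that $[\mathcal{O}_K^*:\mathcal{O}^*]=1$ for $d_K\leq-7$ so in fact one always gets strict inequality, as $p^{\ell-1}(p-1)\geq 2$ fails only for $p=2,\ell=1$ where the value is $1$ — this single exceptional case would need a direct check or an extra hypothesis). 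Granting the inequality, Theorem \ref{generator} yields that $\varepsilon=\mathbf{N}_{K_\mathfrak{f}/H_\mathcal{O}}\bigl(g_\mathfrak{f}(C_0)\bigr)$ generates $H_\mathcal{O}$ over $K$, where $g_\mathfrak{f}(C_0)=g_{(0,~1/p^\ell)}^{12p^\ell}(p^\ell\theta)$ (taking $\mathfrak{c}=\mathcal{O}_K$, so $\mathfrak{f}\mathfrak{c}^{-1}=[p^\ell\theta,1]$, $z=p^\ell\theta$, and $1=\tfrac{1}{p^\ell}\cdot p^\ell + 0$).

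The remaining step is the identification of $\varepsilon$ with a power of the $\Delta$-quotient. I would compute $\mathbf{N}_{K_{(p^\ell)}/H_\mathcal{O}}$ of the Siegel-Ramachandra invariant using the transformation formula (\ref{Artin}) and Theorem \ref{H_O/H}: $\mathrm{Gal}(K_{(p^\ell)}/H_\mathcal{O})$ is represented by the scalar classes, whose action on the indices $(0,~1/p^\ell)$ sweeps out $(0,~w/p^\ell)$ for $w$ running over $(\mathbb{Z}/p^\ell\mathbb{Z})^*$, so the norm becomes $\prod_{w\in(\mathbb{Z}/p^\ell)^*} g_{(0,~w/p^\ell)}^{12p^\ell}(p^\ell\theta)$ up to roots of unity that wash out after the $12$-th power (Corollary \ref{F_N}). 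Comparing with Lemma \ref{StoD}, which gives $\prod_{w=1}^{N-1}g_{(0,~w/N)}^{12}(\tau)=N^{12}\Delta(N\tau)/\Delta(\tau)$, I would apply the lemma with $N=p^\ell$ and $N=p^{\ell-1}$ and take the quotient so that the product over all $w\not\equiv0$ collapses to the product over $w$ with $p\nmid w$, obtaining $\prod_{p\nmid w,\,1\le w<p^\ell} g_{(0,~w/p^\ell)}^{12}(\tau)=p^{12}\Delta(p^\ell\tau)/\Delta(p^{\ell-1}\tau)$; specializing $\tau=\theta$ identifies $\varepsilon$ with a power of (\ref{Deltasingular}). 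Since a nonzero power of a generator is again a generator, this finishes it; reality and algebraic integrality follow from the product-of-sines expansion as in the proof of Theorem \ref{main} together with Proposition \ref{transformation}(i) and the fact that $j(p^\ell\theta)$, $j(p^{\ell-1}\theta)$ are real algebraic integers. The main obstacle I anticipate is bookkeeping the scalar-class action correctly — ensuring that the norm down to $H_\mathcal{O}$ really does produce exactly the product over $w$ coprime to $p$ (with the contributions of $w$ divisible by $p$ but $<p^\ell$ correctly accounted for by the $\Delta(p^{\ell-1}\theta)$ in the denominator) — and verifying the degree inequality rigorously in the small cases $p=2$, $\ell=1$.
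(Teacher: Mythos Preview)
Your plan is exactly the paper's: apply Theorem~\ref{generator} to $L=H_\mathcal{O}$ with $\mathfrak{f}=p^\ell\mathcal{O}_K$, compute the norm of $g_\mathfrak{f}(C_0)$ via the scalar action in Theorem~\ref{H_O/H}, and collapse the product using Lemma~\ref{StoD} with $N=p^\ell$ and $N=p^{\ell-1}$. Two slips need correcting.

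First, you confused the order with the ideal: $\mathfrak{f}\mathfrak{c}^{-1}=p^\ell\mathcal{O}_K=[p^\ell\theta,\,p^\ell]$, \emph{not} $[p^\ell\theta,\,1]$ (the latter is $\mathcal{O}$ itself). Hence $z=z_1/z_2=\theta$, and writing $1=\tfrac{0}{p^\ell}\cdot p^\ell\theta+\tfrac{1}{p^\ell}\cdot p^\ell$ gives $g_\mathfrak{f}(C_0)=g_{(0,\,1/p^\ell)}^{12p^\ell}(\theta)$, evaluated at $\theta$ rather than $p^\ell\theta$. This is consistent with your later ``specializing $\tau=\theta$,'' so the final identification still goes through, but the intermediate line is wrong as written.

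Second, your degree estimate has a sign slip: for $(d_K/p)\in\{-1,0\}$ one has $1-(d_K/p)/p\in\{1+1/p,\,1\}\geq 1$, not $\geq 1-1/p$. So in fact $[H_\mathcal{O}:H]\geq p^\ell$, and the strict inequality $[H_\mathcal{O}:K]>2h_K$ fails only for $p=2$, $\ell=1$ (ramified). The paper disposes of that case directly by observing $K_{(2)}=H_\mathcal{O}$ (compare degrees via Remark~\ref{remark3.3}), so that $g_{(0,1/2)}^{24}(\theta)=\bigl(2^{12}\Delta(2\theta)/\Delta(\theta)\bigr)^2$ already generates. Your weaker bound $p^{\ell-1}(p-1)$ would leave $(p,\ell)=(3,1)$ and $(2,2)$ unresolved as well, which is unnecessary.
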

\begin{proof}
Let $\mathfrak{f}=p^\ell\mathcal{O}_K$. Then the conductor of the
extension $H_\mathcal{O}/K$ is $\mathfrak{f}$ (for instance, see
\cite{Cox} Exercises 9.20$\sim$9.23) and
\begin{equation*}
[H_\mathcal{O}:K]=\left\{\begin{array}{ll} p^{\ell-1}(p+1)h_K &
\textrm{if $p$ is inert in $K/\mathbb{Q}$}\\
p^\ell h_K & \textrm{if $p$ is ramified in $K/\mathbb{Q}$}
\end{array}\right.
\end{equation*}
by the class number formula (\cite{Cox} Theorem 7.24).
\par
If $p=2$ and $\ell=1$, then $K_\mathfrak{f}=H_\mathcal{O}$ by Remark
\ref{remark3.3} and hence the real algebraic integer
$g_{(0,~\frac{1}{2})}^{24}(\theta)$ generates $H_\mathcal{O}$ over
$K$. And,
$g_{(0,~\frac{1}{2})}^{24}(\theta)=\big(2^{12}\frac{\Delta(2\theta)}{\Delta(\theta)}\big)^2$
by Lemma \ref{StoD}.
\par
As for the other cases, since $\mathfrak{f}$ is a prime power and
$[H_\mathcal{O}:K]>2h_K$, the value
$\mathbf{N}_{K_\mathfrak{f}/H_\mathcal{O}}\big(g_\mathfrak{f}(C_0)\big)$
generates $H_\mathcal{O}$ over $K$ by Theorem \ref{generator}. And
we have
\begin{eqnarray*}
\mathbf{N}_{K_\mathfrak{f}/H_\mathcal{O}}\bigg(g_\mathfrak{f}(C_0)\bigg)^2&=&
\prod_{\begin{smallmatrix}1\leq w\leq
p^\ell-1\\\gcd(w,~p)=1\end{smallmatrix}}
g^{12p^\ell}_{(0,~\frac{w}{p^\ell})}(\theta)\quad\textrm{by (\ref{K_N/H}) and Theorem \ref{H_O/H}}\\
&=&\prod_{w=1}^{p^\ell-1}g_{(0,~\frac{w}{p^\ell})}^{12p^\ell}(\theta)\big/
\prod_{w=1}^{p^{\ell-1}-1}g_{(0,~\frac{pw}{p^\ell})}^{12p^\ell}(\theta)\\
&=&\bigg(p^{12\ell}\frac{\Delta(p^\ell\theta)}{\Delta(\theta)}\big/p^{12(\ell-1)}\frac{\Delta(p^{\ell-1}\theta)}{\Delta(\theta)}\bigg)^{p^\ell}\quad \textrm{by Lemma \ref{StoD}}\\
&=&\bigg(p^{12}\frac{\Delta(p^\ell\theta)}{\Delta(p^{\ell-1}\theta)}\bigg)^{p^\ell}.
\end{eqnarray*}
\par
On the other hand, since both
$\frac{\Delta(p^{\ell-1}\theta)}{\Delta(\theta)}$ and
$\frac{\Delta(p^{\ell}\theta)}{\Delta(\theta)}$ are real algebraic
numbers which belong to $H_\mathcal{O}$ (\cite{Lang} Chapter 12
Corollary to Theorem 1), we get the assertion.
\end{proof}

\begin{remark}
Unfortunately, however, we cannot guarantee the fact that the
conjugates of the value in (\ref{Deltasingular}) (or, its inverse)
constitute a normal basis of $H_\mathcal{O}$ over $K$.
\end{remark}

\section{Construction of normal bases}\label{section6}
Given an imaginary quadratic field
$K(\neq\mathbb{Q}(\sqrt{-1}),~\mathbb{Q}(\sqrt{-3}))$ we consider
the extension $K_{(p^2m)}/K_{(pm)}$ for a prime $p\geq5$ and an
integer $m\geq1$ relatively prime to $p$. In this section we shall
construct a normal basis of each intermediate field $F$ over
$K_{(pm)}$ in a different way from Section \ref{section4}, namely by
using the idea of Kawamoto (\cite{Kawamoto}).
\par
First we explicitly determine all intermediate fields $F$ between
$K_{(p^2m)}$ and $K_{(pm)}$. Let $\theta$ be as in (\ref{theta}) and
set $\min(\theta,~\mathbb{Q})=X^2+B_\theta X+C_\theta$. Then one can
identify $\Gamma=\mathrm{Gal}\big(K_{(p^2m)}/K_{(pm)}\big)$ with
\begin{equation*}
\bigg\{\gamma=\begin{pmatrix}t-B_\theta s & -C_\theta s\\s &
t\end{pmatrix}\in\mathrm{GL}_2(\mathbb{Z}/p^2m\mathbb{Z})~:~\gamma\equiv
\begin{pmatrix}1&0\\0&1\end{pmatrix}\pmod{pm}\bigg\}\big/\bigg\{
\pm\begin{pmatrix}1&0\\0&1\end{pmatrix}\bigg\}
\end{equation*}
by (\ref{K_N/H}). Since $[K_{(p^2m)}:K_{(pm)}]=p^2$ by the formula
(\ref{degree}), we readily know by inspection that
\begin{equation*}
\Gamma=\bigg\langle\begin{pmatrix} 1+pm & 0\\0 & 1+pm
\end{pmatrix}
\bigg\rangle\times \bigg\langle\begin{pmatrix} 1-B_\theta pm &
-C_\theta pm\\pm & 1
\end{pmatrix}
\bigg\rangle,
\end{equation*}
which shows that $\Gamma\cong(\mathbb{Z}/p\mathbb{Z})^2$. Hence an
element of $\Gamma$ is of the form
\begin{equation*}
\begin{pmatrix} 1+pm & 0\\0 & 1+pm
\end{pmatrix}^k
\begin{pmatrix} 1-B_\theta pm & -C_\theta pm\\pm & 1
\end{pmatrix}^\ell
=\begin{pmatrix} 1+(k-B_\theta \ell )pm & -C_\theta \ell pm\\\ell pm
& 1+kpm
\end{pmatrix}
\end{equation*}
for $0\leq k,~\ell \leq p-1$. Set
\begin{equation*}
\Gamma_{(k,~\ell
)}=\big\langle\gamma_{(k,~\ell)}\big\rangle=\bigg\langle
\begin{pmatrix} 1+(k-B_\theta \ell )pm & -C_\theta \ell
pm\\\ell pm & 1+kpm
\end{pmatrix}\bigg\rangle
\end{equation*}
for
$(k,~\ell)\in\big\{(0,~1),~(1,~0),~(1,~1),~\cdots,~(1,~p-1)\big\}$,
which represents all subgroups of $\Gamma$ of order $p$. And, let
$F_{(k,~\ell)}$ be its corresponding fixed field of
$\Gamma_{(k,~\ell)}$, namely
\begin{equation*}
F_{(k,~\ell)}=K_{(p^2m)}^{\Gamma_{(k,~\ell)}}\quad\textrm{for}~
(k,~\ell)\in\big\{(0,~1),~(1,~0),~(1,~1),~\cdots,~(1,~p-1)\big\}.
\end{equation*}
Then we have the field tower:
\begin{eqnarray*}
\begindc{\commdiag}
\obj(5,1)[A]{$K_{(pm)}$} \obj(1,3)[B]{$F_{(0,~1)}$}
\obj(3,3)[C]{$F_{(1,~0)}$} \obj(5,3)[D]{$F_{(1,~1)}$}
\obj(7,3)[E]{$\cdots$} \obj(9,3)[F]{$F_{(1,~p-1)}$}
\obj(5,5)[G]{$K_{(p^2m)}$} \mor{A}{B}{}[\atright,\solidline]
\mor{A}{C}{}[\atright,\solidline] \mor{A}{D}{}[\atright,\solidline]
\mor{A}{E}{}[\atright,\solidline] \mor{A}{F}{}[\atright,\solidline]
\mor{G}{B}{}[\atright,\solidline] \mor{G}{C}{}[\atright,\solidline]
\mor{G}{D}{}[\atright,\solidline] \mor{G}{E}{}[\atright,\solidline]
\mor{G}{F}{}[\atright,\solidline]
\enddc
\end{eqnarray*}

\begin{lemma}\label{pin}
$\zeta_{p},~g_{(0,~\frac{1}{pm})}^{12pm}(\theta)\in K_{(pm)}$ and
$\zeta_{p^2},~g_{(0,~\frac{1}{pm})}^{12m}(\theta)\in K_{(p^2m)}$.
\end{lemma}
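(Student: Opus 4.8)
The plan is to treat the four claimed memberships separately, each reducing to known facts about ray class fields together with the modularity and Galois-action statements collected in Section 2. First I would recall that $K_{(N)}$ is by construction the field obtained from $K$ by adjoining all singular values $h(\theta)$ with $h\in\mathcal F_N$ defined and finite at $\theta$ (equation (\ref{K_N})), and that the Galois action is governed by (\ref{K_N/H}) and (\ref{K_N/K}). So the strategy for each item is: exhibit a modular function of the appropriate level whose singular value at $\theta$ is the quantity in question, and then either invoke (\ref{K_N}) directly or check invariance under the relevant Galois subgroup.

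For the cyclotomic generators: it is classical that $\zeta_N\in K_{(N)}$ for every $N\ge 2$, since $\mathbb Q(\zeta_N)\subseteq K_{(N)}$ (the ray class field modulo $N$ contains the $N$-th roots of unity). Applying this with $N=pm$ gives $\zeta_p\in\mathbb Q(\zeta_{pm})\subseteq K_{(pm)}$, and with $N=p^2m$ gives $\zeta_{p^2}\in\mathbb Q(\zeta_{p^2m})\subseteq K_{(p^2m)}$. Alternatively one can view $\zeta_N$ as the value at $\theta$ of the level-$N$ modular function $\mathfrak{f}(N\tau)$-type expansions, but the ray-class-field containment is cleaner.

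For the Siegel-function values: by Corollary \ref{F_N}, the function $g_{(0,~\frac{1}{pm})}^{\,12pm/\gcd(6,~pm)}(\tau)$ lies in $\mathcal F_{pm}$, hence some integer power of it, in particular $g_{(0,~\frac{1}{pm})}^{12pm}(\tau)$, lies in $\mathcal F_{pm}$ as well (its exponent being a multiple of $12pm/\gcd(6,pm)$). Since this function is holomorphic and nonvanishing on $\mathfrak H$ (it is a modular unit, as noted after (\ref{FourierSiegel})), its value at $\theta$ is defined and finite, so $g_{(0,~\frac{1}{pm})}^{12pm}(\theta)\in K_{(pm)}$ by (\ref{K_N}). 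Similarly $g_{(0,~\frac{1}{pm})}^{12m}(\tau)\in\mathcal F_{p^2m}$: indeed $g_{(0,~\frac{1}{pm})}=g_{(0,~\frac{p}{p^2m})}$, and one checks that the exponent $12m$ makes the Kubert–Lang modularity conditions of Proposition \ref{modularity} hold at level $N=p^2m$ with $r=(0,\frac{p}{p^2m})$ — the key points being $12m\cdot(p)^2 = 12mp^2\equiv 0\pmod{\gcd(2,p^2m)\cdot p^2m}$ after dividing through (here one uses $p\ge 5$, so $\gcd(6,p)=1$, which is exactly why the exponent can be reduced from $12pm$ to $12m$), the mixed term vanishes since the first coordinate is $0$, and $\gcd(12,p^2m)\cdot 12m\equiv 0\pmod{12}$. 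Hence $g_{(0,~\frac{1}{pm})}^{12m}(\theta)\in K_{(p^2m)}$, again by (\ref{K_N}).

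The step I expect to be the main obstacle is the precise verification that the exponent $12m$ (rather than the a priori safe $12pm$) already suffices for level-$p^2m$ modularity — this is a congruence bookkeeping argument in Proposition \ref{modularity} where one must carefully track $\gcd(6,p)=1$, $\gcd(2,p^2m)$, and the factor $p$ coming from writing $\frac{1}{pm}=\frac{p}{p^2m}$; everything else is a direct citation of the ray-class-field description (\ref{K_N}) or the classical containment of roots of unity. One should also note that the $12$th powers of Siegel functions transform as in Proposition \ref{transformation}(iii) without the $\gcd$-denominator issue, which gives an alternative route: $g_{(0,~\frac{1}{pm})}^{12m}(\tau)$ is fixed by $\Gamma(p^2m)$ and has algebraic Fourier coefficients over $\mathbb Q(\zeta_{p^2m})$, placing it in $\mathcal F_{p^2m}$ directly.
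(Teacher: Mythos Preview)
Your proposal is correct and follows essentially the same route as the paper: the paper's proof simply says to check via Proposition~\ref{modularity} that $g_{(0,\frac{1}{pm})}^{12pm}(\tau)\in\mathcal F_{pm}$ and $g_{(0,\frac{1}{pm})}^{12m}(\tau)\in\mathcal F_{p^2m}$, and then invokes (\ref{K_N}); the containments $\zeta_p\in K_{(pm)}$ and $\zeta_{p^2}\in K_{(p^2m)}$ are taken as understood. Your write-up is more detailed than the paper's one-line proof (in particular you spell out the congruence check for the exponent $12m$ at level $p^2m$, which the paper leaves to the reader), but the argument is the same.
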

\begin{proof}
One can check by Proposition \ref{modularity} that
$g_{(0,~\frac{1}{pm})}^{12pm}(\tau)\in\mathcal{F}_{pm}$ and
$g_{(0,~\frac{1}{pm})}^{12m}(\tau)\in\mathcal{F}_{p^2m}$. Hence we
get the assertion by (\ref{K_N}).
\end{proof}

Let us investigate the action of $\gamma_{(k,~\ell)}$ on
$\zeta_{p^2}$ and $g_{(0,~\frac{1}{pm})}^{12m}(\theta)$. To this end
we decompose $\gamma_{(k,~\ell)}$ into
\begin{eqnarray*}
\gamma_{(k,~\ell)}=
\alpha_{(k,~\ell)}\cdot\beta_{(k,~\ell)}&=&\begin{pmatrix}1&0\\0&1+(2k-B_\theta
\ell)pm\end{pmatrix}\begin{pmatrix}1+(k-B_\theta \ell)pm &
-C_\theta \ell pm\\\ell pm&1+(B_\theta \ell-k)pm\end{pmatrix}\\
&\in& \bigg\{\begin{pmatrix}1&0\\0&d\end{pmatrix}
~:~d\in(\mathbb{Z}/p^2m\mathbb{Z})^*\bigg\}\cdot
\mathrm{SL}_2(\mathbb{Z}/p^2m\mathbb{Z})\big/\bigg\{\pm
\begin{pmatrix}1&0\\0&1\end{pmatrix}\bigg\}.
\end{eqnarray*}
We see directly from (\ref{FourierSiegel}) that the function
$g_{(0,~\frac{1}{pm})}^{12m}(\tau)$ has Fourier coefficients in
$\mathbb{Q}(\zeta_{pm})$. Thus the action of $\alpha_{(k,~\ell)}$ is
described by (\ref{K_N/H}) and (\ref{first}) as
\begin{eqnarray*}
\zeta_{p^2}&\mapsto&\zeta_{p^2}^{1+(2k-B_\theta \ell)pm}\\
g_{(0,~\frac{1}{pm})}^{12m}(\theta)&\mapsto&
g_{(0,~\frac{1}{pm})}^{12m}(\theta).
\end{eqnarray*}
For some integers $A,~B,~C,~D$\quad let
\begin{equation*}
\beta_{(k,~\ell)}'=\begin{pmatrix} 1+(k-B_\theta \ell)pm+p^2mA &
-C_\theta \ell pm+p^2mB\\\ell pm+p^2mC & 1+(B_\theta \ell-k)pm+p^2mD
\end{pmatrix}
\end{equation*}
be a preimage of $\beta_{(k,~\ell)}$ via the natural surjection
$\mathrm{SL}_2(\mathbb{Z})\rightarrow\mathrm{SL}_2(\mathbb{Z}/p^2m\mathbb{Z})/\big\{\pm\left(\begin{smallmatrix}
1&0\\0&1\end{smallmatrix}\right)\big\}$. Then by (\ref{K_N/H}) and
(\ref{second}) we derive that the action of $\beta_{(k,~\ell)}$ is
given by
\begin{eqnarray*}
\zeta_{p^2}&\mapsto&\zeta_{p^2}\\
g_{(0,~\frac{1}{pm})}^{12m}(\theta)&\mapsto&
g_{(0,~\frac{1}{pm})}^{12m}(\tau)\circ\beta'_{(k,\ell)}(\theta)=
g_{(0,~\frac{1}{pm})\beta'_{(k,\ell)}}^{12m}(\theta)\quad\textrm{by Proposition \ref{transformation}(iii)}\\
&&=g_{(\ell+pC,~\frac{1}{pm}+B_\theta
\ell-k+pD)}^{12m}(\theta)=\zeta_{p^2}^{-6p\ell}g_{(0,~\frac{1}{pm})}^{12m}(\theta)\quad\textrm{by
Proposition \ref{transformation}(iv)}.
\end{eqnarray*}
Hence $\gamma_{(k,~\ell)}$ maps
\begin{eqnarray}\label{map1}
\zeta_{p^2} &\mapsto& \zeta_{p^2}^{1+(2k-B_\theta \ell)pm}\\
g_{(0,~\frac{1}{pm})}^{12m}(\theta) &\mapsto&
\zeta_{p^2}^{-6p\ell}g_{(0,~\frac{1}{pm})}^{12m}(\theta).\label{map2}
\end{eqnarray}

\begin{lemma}
Let
$(k,~\ell)\in\big\{(0,~1),~(1,~0),~(1,~1),~\cdots,~(1,~p-1)\big\}$.
Then $\Gamma_{(k,~\ell)}$ fixes $\zeta_{p^2}^x
g_{(0,~\frac{1}{pm})}^{12my}(\theta)$ for some $x,~y\in\mathbb{Z}$
if and only if $x$ and $y$ satisfy
\begin{equation}\label{cong}
\left\{\begin{array}{rrrll} -B_\theta mx & \equiv & 6y
&\hspace{-0.5cm} \pmod{p} &
\textrm{if}~(k,~\ell)=(0,~1)\\
x& \equiv &0 & \hspace{-0.5cm}\pmod{p} & \textrm{if}~(k,~\ell)=(1,~0)\\
(2-B_\theta \ell)mx & \equiv & 6\ell y & \hspace{-0.5cm}\pmod{p} &
\textrm{otherwise}.
\end{array}\right.
\end{equation}
\end{lemma}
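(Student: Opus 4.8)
The plan is to use the explicit description of the generator $\gamma_{(k,~\ell)}$ of $\Gamma_{(k,~\ell)}$ together with the action formulas (\ref{map1}) and (\ref{map2}). Since $\Gamma_{(k,~\ell)}$ is cyclic of order $p$ generated by $\gamma_{(k,~\ell)}$, an element of $K_{(p^2m)}$ is fixed by $\Gamma_{(k,~\ell)}$ if and only if it is fixed by $\gamma_{(k,~\ell)}$. So I would simply apply $\gamma_{(k,~\ell)}$ to $\zeta_{p^2}^x g_{(0,~\frac{1}{pm})}^{12my}(\theta)$ using the two transformation rules and read off the condition for invariance.

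First I would compute, for arbitrary $(k,~\ell)$, that
\begin{equation*}
\gamma_{(k,~\ell)}\bigg(\zeta_{p^2}^x g_{(0,~\frac{1}{pm})}^{12my}(\theta)\bigg)
=\zeta_{p^2}^{x(1+(2k-B_\theta\ell)pm)}\cdot\big(\zeta_{p^2}^{-6p\ell}\big)^y g_{(0,~\frac{1}{pm})}^{12my}(\theta)
=\zeta_{p^2}^{x+(2k-B_\theta\ell)pmx-6p\ell y}\, g_{(0,~\frac{1}{pm})}^{12my}(\theta),
\end{equation*}
where in the first equality I use (\ref{map1}) and (\ref{map2}) and the fact that $\gamma_{(k,~\ell)}$ is a field automorphism. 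Hence invariance under $\gamma_{(k,~\ell)}$ is equivalent to
\begin{equation*}
x+(2k-B_\theta\ell)pmx-6p\ell y\equiv x\pmod{p^2},
\end{equation*}
that is, $(2k-B_\theta\ell)pmx-6p\ell y\equiv 0\pmod{p^2}$, which upon dividing by $p$ becomes $(2k-B_\theta\ell)mx\equiv 6\ell y\pmod{p}$. Then I would specialize: for $(k,~\ell)=(0,~1)$ this reads $-B_\theta mx\equiv 6y\pmod p$; for $(k,~\ell)=(1,~0)$ the left side is $2mx$ and the right side is $0$, so (since $p\geq 5$ and $\gcd(m,p)=1$, hence $2m$ is a unit mod $p$) it is equivalent to $x\equiv 0\pmod p$; and for the remaining cases $(1,~\ell)$ with $1\leq\ell\leq p-1$ it is exactly $(2-B_\theta\ell)mx\equiv 6\ell y\pmod p$. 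This matches (\ref{cong}) in all three cases, so I would conclude.

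The computation is essentially routine; the only point requiring a little care — and the closest thing to an obstacle — is the passage from the congruence modulo $p^2$ to the one modulo $p$, where one must note that every term on the left carries a factor of $p$ so that cancellation of the single factor $p$ is legitimate, and that in the $(k,~\ell)=(1,~0)$ case the coefficient $2m$ of $x$ is invertible modulo $p$ (this is where $p\geq 5$ and $\gcd(p,m)=1$ enter), allowing the condition $2mx\equiv 0\pmod p$ to be simplified to $x\equiv 0\pmod p$. One should also observe that $\zeta_{p^2}^x g_{(0,~\frac{1}{pm})}^{12my}(\theta)$ indeed lies in $K_{(p^2m)}$ by Lemma \ref{pin}, so that it makes sense to ask whether $\Gamma_{(k,~\ell)}\subseteq\mathrm{Gal}(K_{(p^2m)}/K_{(pm)})$ fixes it.
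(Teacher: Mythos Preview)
Your proof is correct and follows essentially the same route as the paper: apply the action formulas (\ref{map1}) and (\ref{map2}) to compute $\big(\zeta_{p^2}^x g_{(0,\frac{1}{pm})}^{12my}(\theta)\big)^{\gamma_{(k,\ell)}}$, equate exponents of $\zeta_{p^2}$ modulo $p^2$, and reduce to the stated congruences. Your write-up is in fact a bit more explicit than the paper's in carrying out the case split and in justifying the simplification $2mx\equiv0\Rightarrow x\equiv0\pmod p$ for $(k,\ell)=(1,0)$.
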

\begin{proof}
It follows from (\ref{map1}) and (\ref{map2}) that
\begin{eqnarray*}
\bigg(\zeta_{p^2}^xg_{(0,~\frac{1}{pm})}^{12my}(\theta)\bigg)^{\gamma_{(k,~\ell)}}=\zeta_{p^2}^{\big(1+(2k-B_\theta
\ell)pm\big)x-6p\ell y} g_{(0,~\frac{1}{pm})}^{12my}(\theta).
\end{eqnarray*}
Then this value is equal to
$\zeta_{p^2}^xg_{(0,~\frac{1}{pm})}^{12my}(\theta)$ if and only if
\begin{equation*}
\big(1+(2k-B_\theta \ell)pm\big)x-6p\ell y\equiv x\pmod{p^2},
\end{equation*}
which reduces to (\ref{cong}). And, this proves the lemma.
\end{proof}

\begin{lemma}\label{composite}
$K_{(p^2m)}=K_{(pm)}\big(\zeta_{p^2},~g_{(0,~\frac{1}{pm})}^{12m}(\theta)\big)$.
\end{lemma}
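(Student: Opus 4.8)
The plan is to show that the compositum $L=K_{(pm)}\big(\zeta_{p^2},~g_{(0,~\frac{1}{pm})}^{12m}(\theta)\big)$ coincides with $K_{(p^2m)}$ by a degree count combined with the fixed-field computations already in place. First I would note that $L\subseteq K_{(p^2m)}$ by Lemma \ref{pin}, so it suffices to prove $[L:K_{(pm)}]=p^2=[K_{(p^2m)}:K_{(pm)}]$, equivalently that the only element of $\Gamma=\mathrm{Gal}(K_{(p^2m)}/K_{(pm)})$ fixing both $\zeta_{p^2}$ and $g_{(0,~\frac{1}{pm})}^{12m}(\theta)$ is the identity.

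Next I would invoke the action formulas \eqref{map1} and \eqref{map2}: the generator $\gamma_{(k,~\ell)}$ sends $\zeta_{p^2}\mapsto\zeta_{p^2}^{1+(2k-B_\theta\ell)pm}$ and $g_{(0,~\frac{1}{pm})}^{12m}(\theta)\mapsto\zeta_{p^2}^{-6p\ell}g_{(0,~\frac{1}{pm})}^{12m}(\theta)$. Fixing $g_{(0,~\frac{1}{pm})}^{12m}(\theta)$ forces $\zeta_{p^2}^{-6p\ell}=1$, i.e. $6\ell\equiv0\pmod p$, and since $p\geq5$ this gives $\ell\equiv0\pmod p$. Then fixing $\zeta_{p^2}$ forces $(2k-B_\theta\ell)pm\equiv0\pmod{p^2}$, and with $\ell\equiv0$ this reduces to $2km\equiv0\pmod p$; since $\gcd(m,p)=1$ and $p$ is odd we conclude $k\equiv0\pmod p$. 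Thus any element of $\Gamma$ fixing both generators is trivial. Equivalently, one sees from \eqref{cong} that for each of the $p+1$ subgroups $\Gamma_{(k,~\ell)}$ of order $p$ there is no pair $(x,~y)$ with $x\not\equiv0$ and the congruence holding for all of them simultaneously — in fact $\Gamma_{(1,~0)}$ already forces $x\equiv0\pmod p$, so $\zeta_{p^2}\notin F_{(1,~0)}$, while $\Gamma_{(0,~1)}$ forces $6y\equiv0$, hence (as $p\geq5$) $g_{(0,~\frac{1}{pm})}^{12m}(\theta)$ lies in no proper intermediate field that also contains $\zeta_{p^2}$.

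Therefore the subgroup of $\Gamma$ fixing $L$ is trivial, so by Galois theory $L=K_{(p^2m)}$, which is the assertion. I do not expect any real obstacle here: the only point requiring minor care is the hypothesis $p\geq5$, used to clear the factor $6$ in $\zeta_{p^2}^{-6p\ell}$ and in the congruence $6y\equiv0\pmod p$, together with $\gcd(m,p)=1$ to clear $m$; both are standing assumptions in this section. The computation is entirely a consequence of \eqref{map1}, \eqref{map2}, Lemma \ref{pin}, and the degree formula \eqref{degree}.
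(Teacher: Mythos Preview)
Your proof is correct and is essentially the paper's own argument: the paper likewise uses only Lemma~\ref{pin} and the action formulas \eqref{map1}--\eqref{map2}, merely packaging the computation as a two-step tower (first $F_{(1,0)}=K_{(pm)}\big(g_{(0,\frac{1}{pm})}^{12m}(\theta)\big)$ via \eqref{map2}, then $K_{(p^2m)}=F_{(1,0)}(\zeta_{p^2})$ via \eqref{map1}) rather than computing the joint stabilizer directly. The underlying content and the hypotheses used ($p\geq5$, $\gcd(m,p)=1$) are identical.
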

\begin{proof}
Since $g_{(0,~\frac{1}{pm})}^{12m}(\theta)\not\in F_{(0,~1)}$ and
$g_{(0,~\frac{1}{pm})}^{12m}(\theta)\in F_{(1,~0)}$ by (\ref{map2}),
we claim that $g_{(0,~\frac{1}{pm})}^{12m}(\theta)\not\in K_{(pm)}$
and
$F_{(1,~0)}=K_{(pm)}\big(g_{(0,~\frac{1}{pm})}^{12m}(\theta)\big)$
owing to the fact $[F_{(1,~0)}:K_{(pm)}]=p$. Furthermore, since
$\zeta_{p^2}\not\in F_{(1,~0)}$ by (\ref{map1}), we achieve by the
fact $[K_{(p^2m)}:F_{(1,~0)}]=p$ that
\begin{equation*}
K_{(p^2m)}=F_{(1,~0)}\big(\zeta_{p^2}\big)=K_{(pm)}\bigg(\zeta_{p^2},~g_{(0,~\frac{1}{pm})}^{12m}(\theta)\bigg).
\end{equation*}
\end{proof}

\begin{theorem}\label{F}
Let
$(k,~\ell)\in\big\{(0,~1),~(1,~0),~(1,~1),~\cdots,~(1,~p-1)\big\}$
and $y'$ be the integer such that $y\cdot y'\equiv1\pmod{p}$ and
$0<y'<p$ for an integer $y\not\equiv0\pmod{p}$. Then we have
\begin{equation*}
F_{(k,~\ell)}=\left\{\begin{array}{ll}
K_{(pm)}\bigg(\zeta_{p^2}g_{(0,~\frac{1}{pm})}^{12m^2 6'(p-B_\theta)
}(\theta)\bigg) &
\textrm{if}~(k,~\ell)=(0,~1)\\
K_{(pm)}\bigg(g_{(0,~\frac{1}{pm})}^{12m}(\theta)\bigg) &
\textrm{if}~(k,~\ell)=(1,~0)\\
K_{(pm)}\bigg(\zeta_{p^2}g_{(0,~\frac{1}{pm})}^{12m^2(6\ell)'(2+p-B_\theta
\ell)}(\theta)\bigg) & \textrm{otherwise}.
\end{array}\right.
\end{equation*}
\end{theorem}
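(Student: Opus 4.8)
The proof will treat the $p+1$ cases $(k,~\ell)\in\{(0,~1),~(1,~0),~(1,~1),~\cdots,~(1,~p-1)\}$ by one uniform recipe. Write $u=g_{(0,~\frac{1}{pm})}^{12m}(\theta)$, so that by Lemmas~\ref{pin} and~\ref{composite} we have $\zeta_{p^2},~u\in K_{(p^2m)}$ and $K_{(p^2m)}=K_{(pm)}\big(\zeta_{p^2},~u\big)$; note also $g_{(0,~\frac{1}{pm})}^{12my}(\theta)=u^y$ for every integer $y$. For each $(k,~\ell)$ I will (a) produce integers $x,~y$ satisfying the congruence (\ref{cong}) of the preceding lemma, so that $\alpha_{(k,~\ell)}:=\zeta_{p^2}^{x}u^{y}$ is fixed by the cyclic group $\Gamma_{(k,~\ell)}$ and hence lies in $F_{(k,~\ell)}$; (b) show $\alpha_{(k,~\ell)}\notin K_{(pm)}$; and (c) conclude $F_{(k,~\ell)}=K_{(pm)}\big(\alpha_{(k,~\ell)}\big)$, since $[F_{(k,~\ell)}:K_{(pm)}]=[\Gamma:\Gamma_{(k,~\ell)}]=p$ is prime and $K_{(pm)}\subsetneq K_{(pm)}\big(\alpha_{(k,~\ell)}\big)\subseteq F_{(k,~\ell)}$ then forces equality.

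For step (a) I simply solve (\ref{cong}). When $(k,~\ell)=(1,~0)$ the condition is $x\equiv0\pmod p$, so $(x,~y)=(0,~1)$ works and $\alpha_{(1,~0)}=u=g_{(0,~\frac{1}{pm})}^{12m}(\theta)$. When $(k,~\ell)=(0,~1)$ the condition is $-B_\theta mx\equiv6y\pmod p$; since $p\geq5$, $6$ is a unit modulo $p$, so $x=1$ and $y\equiv-B_\theta m\cdot 6'\pmod p$ works, and replacing $-B_\theta$ by the nonnegative representative $p-B_\theta$ and scaling $y$ up to $m\cdot 6'(p-B_\theta)$ gives the stated generator $\zeta_{p^2}g_{(0,~\frac{1}{pm})}^{12m^2 6'(p-B_\theta)}(\theta)$; such a replacement changes the exponent only by a multiple of $12pm$, hence multiplies $\alpha_{(0,~1)}$ by a factor lying in $K_{(pm)}$ because $g_{(0,~\frac{1}{pm})}^{12pm}(\theta)\in K_{(pm)}$ by Lemma~\ref{pin}, so the generated field is unaffected. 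When $(k,~\ell)=(1,~\ell)$ with $1\leq\ell\leq p-1$ the condition is $(2-B_\theta\ell)mx\equiv6\ell y\pmod p$; now $6\ell$ is a unit modulo $p$, so $x=1$ and $y\equiv(6\ell)'(2-B_\theta\ell)m\pmod p$ works, and lifting $2-B_\theta\ell$ to the nonnegative representative $2+p-B_\theta\ell$ yields the stated generator. In every case the $u$-exponent is a multiple of $12m$, so $\alpha_{(k,~\ell)}\in K_{(p^2m)}$, and by construction $\alpha_{(k,~\ell)}\in F_{(k,~\ell)}$.

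Step (b) uses the explicit action formulas (\ref{map1})--(\ref{map2}) for the generators $\gamma_{(1,~0)}$ and $\gamma_{(0,~1)}$ of $\Gamma$. In the cases $(0,~1)$ and $(1,~\ell)$, where $\zeta_{p^2}$ occurs to the first power in $\alpha_{(k,~\ell)}=\zeta_{p^2}u^{y}$, apply $\gamma_{(1,~0)}$: by (\ref{map2}) it fixes $u$, hence $u^{y}$, while by (\ref{map1}) it sends $\zeta_{p^2}\mapsto\zeta_{p^2}^{1+2pm}$, so $\alpha_{(k,~\ell)}\mapsto\zeta_{p^2}^{2pm}\alpha_{(k,~\ell)}=\zeta_p^{2m}\alpha_{(k,~\ell)}\neq\alpha_{(k,~\ell)}$, since $p\geq5$ and $\gcd(m,~p)=1$ give $\zeta_p^{2m}\neq1$. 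In the case $(1,~0)$, where $\alpha_{(1,~0)}=u$, apply $\gamma_{(0,~1)}$: by (\ref{map2}) it sends $u\mapsto\zeta_{p^2}^{-6p}u=\zeta_p^{-6}u\neq u$, since $p\geq5$ does not divide $6$. Thus in all cases $\alpha_{(k,~\ell)}$ is moved by an element of $\Gamma$, i.e. $\alpha_{(k,~\ell)}\notin K_{(pm)}$, and step (c) completes the proof.

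I do not anticipate any serious obstacle here: the whole argument rests on $6$ and $\ell$ being invertible modulo $p$ and on $\zeta_p^{2m},~\zeta_p^{-6}\neq1$, all of which are precisely what the hypotheses $p\geq5$ and $\gcd(m,~p)=1$ provide. The only point that requires care is the bookkeeping in step (a): choosing representatives modulo $p$ so that the displayed exponents are honest nonnegative integers, and checking that any other admissible choice alters $\alpha_{(k,~\ell)}$ only by a factor in $K_{(pm)}$ arising from $g_{(0,~\frac{1}{pm})}^{12pm}(\theta)\in K_{(pm)}$, so that the fixed field is correctly described independently of these choices.
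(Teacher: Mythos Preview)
Your proof is correct and follows essentially the same approach as the paper's: pick $(x,y)$ satisfying the congruence \eqref{cong} so that $\alpha_{(k,\ell)}\in F_{(k,\ell)}$, then show $\alpha_{(k,\ell)}\notin K_{(pm)}$ and conclude by the prime-degree argument. The only cosmetic difference is in step~(b): the paper argues that each chosen $(x,y)$ satisfies exactly one of the congruences in \eqref{cong}, so $\alpha_{(k,\ell)}$ lies in a unique $F_{(k,\ell)}$ and hence not in $K_{(pm)}$, whereas you compute the action of a specific $\gamma_{(1,0)}$ or $\gamma_{(0,1)}$ via \eqref{map1}--\eqref{map2} directly; these are equivalent verifications of the same fact.
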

\begin{proof}
Take a solution of (\ref{cong}) as
\begin{eqnarray}\label{solution}
(x,~y)=\left\{\begin{array}{ll} \big(1,~m6'(p-B_\theta)\big) &
\textrm{if}~(k,~\ell)=(0,~1)\\
(0,~1) & \textrm{if}~(k,~\ell)=(1,~0)\\
\big(1,~m(6\ell)'(2+p-B_\theta \ell)\big) & \textrm{otherwise}
\end{array}\right.
\end{eqnarray}
which consists of nonnegative integers. We can then readily check
that a solution $(x,~y)$ in (\ref{solution}) does not satisfy two
congruence equations in (\ref{cong}) simultaneously. This shows that
for each $(x,~y)$,
$\zeta_{p^2}^xg_{(0,~\frac{1}{pm})}^{12my}(\theta)$ belongs to a
unique $F_{(k,~\ell)}$; hence in particular, it is not in
$K_{(pm)}$. Since $[F_{(k,~\ell)}:K_{(pm)}]=p$, we get the
conclusion.
\end{proof}

To accomplish our goal we are in need of the following two lemmas:

\begin{lemma}\label{lemma1}
Let $L$ be a number field containing $\zeta_n$ and $F$ be a cyclic
extension over $L$ of degree $n$. Then there exists an element $\xi$
of $L$ such that $F=L(\sqrt[n]{\xi})$. And, the conjugates of
$\sum_{s=0}^{n-1}(\sqrt[n]{\xi})^s$ over $L$ form a normal basis of
$F$ over $L$.
\end{lemma}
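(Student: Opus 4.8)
The plan is to split the statement into its two assertions and treat them in order. For the first part, the existence of $\xi\in L$ with $F=L(\sqrt[n]{\xi})$ is exactly Kummer theory: since $L$ contains a primitive $n$th root of unity $\zeta_n$ and $F/L$ is cyclic of degree $n$, I would invoke the standard Kummer correspondence (see e.g. \cite{Lang} Chapter VI). Concretely, fixing a generator $\sigma$ of $\mathrm{Gal}(F/L)$ and applying Hilbert's Theorem 90 to the cocycle $\sigma\mapsto\zeta_n^{-1}$, one obtains an element $\alpha\in F^{*}$ with $\alpha^{\sigma}=\zeta_n\alpha$; then $\xi:=\alpha^n$ is $\sigma$-fixed, hence lies in $L$, and $\alpha=\sqrt[n]{\xi}$ generates $F$ over $L$ because the elements $\alpha,\alpha^2,\dots,\alpha^{n-1}$ are moved by distinct powers of $\zeta_n$ under $\sigma$, so $\alpha$ has exactly $n$ conjugates.

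For the second part, write $\beta=\sqrt[n]{\xi}$ and $\gamma_j:=\beta+\beta\zeta_n^j+\cdots$ more precisely: let $\eta=\sum_{s=0}^{n-1}\beta^{s}$ (with $\beta^0=1$), and let the Galois group be $\{\mathrm{id},\sigma,\dots,\sigma^{n-1}\}$ with $\sigma(\beta)=\zeta_n\beta$. Then $\sigma^{t}(\eta)=\sum_{s=0}^{n-1}\zeta_n^{st}\beta^{s}$. By Lemma \ref{character}, the conjugates $\{\sigma^{t}(\eta)\}_{t=0}^{n-1}$ form a normal basis of $F$ over $L$ if and only if for every character $\chi$ of $\mathrm{Gal}(F/L)$ one has $\sum_{t=0}^{n-1}\chi(\sigma^{-t})\sigma^{t}(\eta)\neq 0$. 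Now every character is of the form $\chi(\sigma)=\zeta_n^{-u}$ for some $u\in\{0,1,\dots,n-1\}$, so $\chi(\sigma^{-t})=\zeta_n^{ut}$, and
\begin{equation*}
\sum_{t=0}^{n-1}\chi(\sigma^{-t})\sigma^{t}(\eta)=\sum_{t=0}^{n-1}\zeta_n^{ut}\sum_{s=0}^{n-1}\zeta_n^{st}\beta^{s}
=\sum_{s=0}^{n-1}\beta^{s}\sum_{t=0}^{n-1}\zeta_n^{(s+u)t}.
\end{equation*}
The inner sum $\sum_{t=0}^{n-1}\zeta_n^{(s+u)t}$ equals $n$ when $s+u\equiv0\pmod n$ and $0$ otherwise, so the whole expression collapses to $n\beta^{s_0}$, where $s_0\in\{0,\dots,n-1\}$ is the unique index with $s_0\equiv -u\pmod n$. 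Since $\beta\neq0$, this is nonzero for every $u$, i.e. for every character, and Lemma \ref{character} gives the normal basis property.

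I do not expect a serious obstacle here: both halves are classical, and the only point requiring a little care is making the roots of unity and the character computation line up (tracking $\sigma^{-t}$ versus $\sigma^{t}$, and making sure $\beta^0=1$ is genuinely one of the summands so that the orthogonality relation picks out a power of $\beta$ rather than something trivial). The mild subtlety worth stating explicitly is that the argument uses $\zeta_n\in L$ in two places — once to realize $F$ as a radical extension via Hilbert 90, and once implicitly to guarantee that $\eta=\sum_s\beta^s$ and its conjugates all lie in $F$ — so the hypothesis $\zeta_n\in L$ is used in full strength.
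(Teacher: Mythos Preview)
Your proof is correct. The paper itself does not supply an argument for this lemma; it simply cites \cite{Kawamoto}, p.~223. Your write-up therefore goes further than the paper by giving a self-contained proof, and it does so in a way that meshes well with the surrounding material: the Kummer/Hilbert~90 step is the standard one, and for the normal basis assertion you invoke the paper's own Lemma~\ref{character} rather than an external reference. The orthogonality computation collapsing $\sum_t \chi(\sigma^{-t})\sigma^t(\eta)$ to $n\beta^{s_0}$ is clean and correct. An equivalent route (and likely closer to Kawamoto's original) is to note directly that $\{1,\beta,\dots,\beta^{n-1}\}$ is an $L$-basis of $F$ and that the change-of-basis matrix to $\{\sigma^t(\eta)\}_t$ is the Vandermonde/DFT matrix $(\zeta_n^{st})_{s,t}$, which is invertible; your character argument is the diagonalized form of this same computation via Lemma~\ref{Frobenius}.
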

\begin{proof}
See \cite{Kawamoto} p. 223.
\end{proof}

\begin{lemma}\label{lemma2}
Let $L$ be a number field. Let $F_1$ and $F_2$ be finite Galois
extensions of $L$ with $F_1\cap F_2=L$. If the conjugates of
$\xi_s\in F_s$ over $L$ form a normal basis of $F_s$ over $L$ for
$s=1,~2$, then the conjugates of $\xi_1\xi_2$ over $L$ form a normal
basis of $F_1F_2$ over $L$.
\end{lemma}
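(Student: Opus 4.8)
The plan is to reduce the statement to the character-theoretic criterion of Lemma \ref{character} (or, if one prefers a cleaner path, to combine it with the standard fact that a normal basis generator is exactly a generator of $F_s$ as a module over the group algebra $L[\mathrm{Gal}(F_s/L)]$). Set $G_s=\mathrm{Gal}(F_s/L)$ for $s=1,2$. Since $F_1\cap F_2=L$, the restriction map gives an isomorphism $G:=\mathrm{Gal}(F_1F_2/L)\cong G_1\times G_2$, and every character $\chi\in\widehat{G}$ factors uniquely as $\chi=\chi_1\chi_2$ with $\chi_s\in\widehat{G_s}$, where we view $\chi_s$ as a character of $G$ via the projection $G\to G_s$. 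First I would record this decomposition explicitly, together with the compatibility that for $\gamma=(\gamma_1,\gamma_2)\in G_1\times G_2$ one has $(\xi_1\xi_2)^\gamma=\xi_1^{\gamma_1}\xi_2^{\gamma_2}$, since $\xi_s\in F_s$ is fixed by the factor acting trivially on $F_s$.

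The key computation is then a factorization of the relevant character sum. By Lemma \ref{character}, the conjugates of $\xi_1\xi_2$ form a normal basis of $F_1F_2$ over $L$ if and only if
\begin{equation*}
\sum_{\gamma\in G}\chi(\gamma^{-1})(\xi_1\xi_2)^{\gamma}\neq0\quad\textrm{for every }\chi\in\widehat{G}.
\end{equation*}
Writing $\gamma=(\gamma_1,\gamma_2)$ and $\chi=\chi_1\chi_2$, the summand becomes $\chi_1(\gamma_1^{-1})\chi_2(\gamma_2^{-1})\,\xi_1^{\gamma_1}\xi_2^{\gamma_2}$, so the double sum splits as a product:
\begin{equation*}
\sum_{\gamma\in G}\chi(\gamma^{-1})(\xi_1\xi_2)^{\gamma}=\Bigg(\sum_{\gamma_1\in G_1}\chi_1(\gamma_1^{-1})\xi_1^{\gamma_1}\Bigg)\Bigg(\sum_{\gamma_2\in G_2}\chi_2(\gamma_2^{-1})\xi_2^{\gamma_2}\Bigg).
\end{equation*}
By the hypothesis that the conjugates of $\xi_s$ form a normal basis of $F_s$ over $L$, Lemma \ref{character} applied to each $F_s$ shows that each of the two factors on the right is nonzero. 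Hence the product is nonzero for every $\chi$, and Lemma \ref{character} (applied now to $F_1F_2$) yields the conclusion.

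The main obstacle to watch is the validity of the hypotheses of Lemma \ref{character} in this setting: that lemma is stated for finite \emph{abelian} extensions, so one should either note that the $F_s$ (hence $F_1F_2$) arising in our application are abelian over $L$, or — to keep the lemma in full generality — argue instead via the group-algebra characterization of normal bases, replacing the appeal to the Frobenius determinant relation by the observation that $\xi_1\xi_2$ generates $F_1F_2$ over $L[G]\cong L[G_1]\otimes_L L[G_2]$ precisely because $\xi_s$ generates $F_s$ over $L[G_s]$ and $F_1F_2\cong F_1\otimes_L F_2$ as $L[G]$-modules (using $F_1\cap F_2=L$). I expect the character-sum route above to be the cleanest for the paper, since the ambient extensions here are abelian, and the only real content is the bookkeeping in the factorization of $\widehat{G}$ and of the sum; none of the steps requires a nontrivial estimate or construction.
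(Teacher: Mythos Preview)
Your argument is correct. The factorization of the character sum over $G\cong G_1\times G_2$ is exactly the right computation, and your observation about the abelian hypothesis in Lemma~\ref{character} (together with the tensor-product fallback for the general Galois case) is well placed.

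As for comparison: the paper does not actually prove this lemma but simply refers the reader to Kawamoto~\cite{Kawamoto}, p.~227. Your route has the advantage of being self-contained within the paper, since it rests only on Lemma~\ref{character} already established there; in the abelian situation relevant to the applications (Theorems~\ref{main1} and~\ref{main2}) the character-sum factorization is entirely adequate, and the group-algebra variant you sketch recovers the lemma in the generality in which it is stated.
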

\begin{proof}
See \cite{Kawamoto} p. 227.
\end{proof}

Now we are ready to prove our main theorem about normal bases.

\begin{theorem}\label{main1}
Let $(k,~\ell)$ and $y'$ be as in Theorem \ref{F}. Then the
conjugates of
\begin{equation}\label{values}
\left\{\begin{array}{ll}
\displaystyle\sum_{s=0}^{p-1}\bigg(\zeta_{p^2}g_{(0,~\frac{1}{pm})}^{12m^2
6'(p-B_\theta)}(\theta)\bigg)^s &
\textrm{if}~(k,~\ell)=(0,~1)\\
\displaystyle\sum_{s=0}^{p-1}g_{(0,~\frac{1}{pm})}^{12ms}(\theta) &
\textrm{if}~(k,~\ell)=(1,~0)\\
\displaystyle\sum_{s=0}^{p-1}\bigg(\zeta_{p^2}g_{(0,~\frac{1}{pm})}^{12m^2(6\ell)'(2+p-B_\theta
\ell)}(\theta)\bigg)^s & \textrm{otherwise}
\end{array}\right.
\end{equation}
over $K_{(pm)}$ form a normal basis of $F_{(k,~\ell)}$ over
$K_{(pm)}$. Moreover, the values in (\ref{values}) are algebraic
integers.
\end{theorem}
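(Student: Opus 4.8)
The plan is to recognize each $F_{(k,\ell)}$ as a cyclic Kummer extension of $K_{(pm)}$ of degree $p$ and then invoke Kawamoto's Lemma \ref{lemma1}. Write $\eta_{(k,\ell)}$ for the generator of $F_{(k,\ell)}$ over $K_{(pm)}$ produced in Theorem \ref{F}. Because $\Gamma\cong(\mathbb{Z}/p\mathbb{Z})^2$, the quotient $\mathrm{Gal}(F_{(k,\ell)}/K_{(pm)})=\Gamma/\Gamma_{(k,\ell)}$ is cyclic of order $p$, and $\zeta_p\in K_{(pm)}$ by Lemma \ref{pin}; so the only thing to check is that $\eta_{(k,\ell)}$ is a $p$-th root of an element of $K_{(pm)}$.

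This is the single computation to perform. Raising to the $p$-th power sends $\zeta_{p^2}\mapsto\zeta_p$ and $g_{(0,\frac{1}{pm})}^{12m}(\theta)\mapsto g_{(0,\frac{1}{pm})}^{12pm}(\theta)$, and both $\zeta_p$ and $g_{(0,\frac{1}{pm})}^{12pm}(\theta)$ lie in $K_{(pm)}$ by Lemma \ref{pin}. Concretely, $\eta_{(1,0)}^p=g_{(0,\frac{1}{pm})}^{12pm}(\theta)$, $\eta_{(0,1)}^p=\zeta_p\big(g_{(0,\frac{1}{pm})}^{12pm}(\theta)\big)^{m6'(p-B_\theta)}$, and $\eta_{(1,\ell)}^p=\zeta_p\big(g_{(0,\frac{1}{pm})}^{12pm}(\theta)\big)^{m(6\ell)'(2+p-B_\theta\ell)}$ for $\ell\geq1$, using $p\cdot 12m^2=12pm\cdot m$ in the last two cases. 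Hence $F_{(k,\ell)}=K_{(pm)}(\eta_{(k,\ell)})$ with $\xi:=\eta_{(k,\ell)}^p\in K_{(pm)}$, so $\eta_{(k,\ell)}$ serves as a $p$-th root $\sqrt[p]{\xi}$ in Lemma \ref{lemma1} applied with $n=p$, $L=K_{(pm)}$, $F=F_{(k,\ell)}$; that lemma yields immediately that the conjugates of $\sum_{s=0}^{p-1}\eta_{(k,\ell)}^s$ over $K_{(pm)}$, which are exactly the elements in (\ref{values}), form a normal basis of $F_{(k,\ell)}$ over $K_{(pm)}$.

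For the last assertion, I would note that every exponent occurring — $12m$, $12m^2 6'(p-B_\theta)$, and $12m^2(6\ell)'(2+p-B_\theta\ell)$ — is a positive integer, since $0<y'<p$, $B_\theta\in\{0,1\}$, $m\geq1$ and $0\leq\ell\leq p-1$. Thus each $\eta_{(k,\ell)}$ is a root of unity times a positive power of $g_{(0,\frac{1}{pm})}(\theta)$, which is integral over $\mathbb{Z}[j(\theta)]$ by Proposition \ref{transformation}(i); since $j(\theta)$ is an algebraic integer, so is $\eta_{(k,\ell)}$, and therefore so is $\sum_{s=0}^{p-1}\eta_{(k,\ell)}^s$. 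The delicate points are purely bookkeeping: verifying that the exponents are positive integers — so that part (i) of Proposition \ref{transformation}, rather than the $p$-sensitive part (ii), suffices for integrality — and that the $p$-th powers really descend into $K_{(pm)}$, which rests on $g_{(0,\frac{1}{pm})}^{12pm}(\theta)\in K_{(pm)}$ while only $g_{(0,\frac{1}{pm})}^{12m}(\theta)\in K_{(p^2m)}$ in Lemma \ref{pin}. Lemma \ref{lemma2} is not used here; it enters only when assembling a normal basis of the full field $K_{(p^2m)}$.
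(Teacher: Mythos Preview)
Your proof is correct and follows essentially the same approach as the paper: apply Kawamoto's Lemma \ref{lemma1} (with $n=p$, $L=K_{(pm)}$, $F=F_{(k,\ell)}$) to the generators of Theorem \ref{F}, using Lemma \ref{pin} to see that $\zeta_p\in K_{(pm)}$ and that the $p$-th powers descend to $K_{(pm)}$, and Proposition \ref{transformation}(i) together with the integrality of $j(\theta)$ for the algebraic-integer claim. The paper's own proof is a two-line sketch invoking exactly these ingredients; you have simply unpacked the bookkeeping (positivity of the exponents, the explicit form of $\eta_{(k,\ell)}^p$) that the paper leaves implicit.
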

\begin{proof}
Since the function $g_{(0,~\frac{1}{pm})}(\tau)$ is integral over
$\mathbb{Z}[j(\tau)]$ by Proposition \ref{transformation}(i) and
$j(\theta)$ is an algebraic integer (\cite{Lang} or \cite{Shimura}),
the values in (\ref{values}) are all algebraic integers. Thus the
theorem follows by applying Lemma \ref{lemma1} with the aid of Lemma
\ref{pin} and Theorem \ref{F}.
\end{proof}

\begin{theorem}\label{main2}
The conjugates of the algebraic integer
\begin{eqnarray*}
\bigg(\sum_{s=0}^{p-1}\zeta_{p^2}^s\bigg)
\bigg(\sum_{s=0}^{p-1}g_{(0,~\frac{1}{pm})}^{12ms}(\theta)\bigg)
\end{eqnarray*}
over $K_{(pm)}$ form a normal basis of $K_{(p^2m)}$ over $K_{(pm)}$.
\end{theorem}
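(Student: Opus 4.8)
The plan is to exhibit $K_{(p^2m)}/K_{(pm)}$ as the compositum of two linearly disjoint cyclic subextensions of degree $p$, treat each one by Lemma \ref{lemma1}, and then multiply the two normal bases together via Lemma \ref{lemma2}. Write $L=K_{(pm)}$, $F_1=L(\zeta_{p^2})$ and $F_2=F_{(1,~0)}=L\big(g_{(0,~\frac{1}{pm})}^{12m}(\theta)\big)$, the last equality being Theorem \ref{F}. I would first check that $F_1$ and $F_2$ are cyclic of degree $p$ over $L$: for $F_2$ this is immediate since $F_2$ is the fixed field of $\Gamma_{(1,~0)}$, so $\mathrm{Gal}(F_2/L)\cong\Gamma/\Gamma_{(1,~0)}\cong\mathbb{Z}/p\mathbb{Z}$; for $F_1$ note that by (\ref{map1}) every element of $\Gamma$ acts on $\zeta_{p^2}$ by $\zeta_{p^2}\mapsto\zeta_{p^2}^{1+cpm}$ for some integer $c$, and these exponents form a cyclic subgroup of $(\mathbb{Z}/p^2\mathbb{Z})^*$, so $[F_1:L]$ divides $p$, while $[F_1:L]=p$ because $\gamma_{(1,~0)}$ moves $\zeta_{p^2}$ (here $\gcd(2m,~p)=1$ as $p\geq5$). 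The fields $F_1$ and $F_2$ are distinct since $\zeta_{p^2}\notin F_{(1,~0)}$ by (\ref{map1}); being two distinct intermediate fields of the $(\mathbb{Z}/p\mathbb{Z})^2$-extension $K_{(p^2m)}/K_{(pm)}$, they satisfy $F_1\cap F_2=L$, and $F_1F_2=K_{(p^2m)}$ by Lemma \ref{composite}.

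Next I would put both extensions in Kummer form and invoke Lemma \ref{lemma1}; the hypothesis $\zeta_p\in L$ holds by Lemma \ref{pin}. Since $\zeta_{p^2}^p=\zeta_p\in L$ and $[F_1:L]=p$, one may take $\xi=\zeta_p$ and $\sqrt[p]{\xi}=\zeta_{p^2}$ in Lemma \ref{lemma1}, so the conjugates over $L$ of $\sum_{s=0}^{p-1}\zeta_{p^2}^s$ form a normal basis of $F_1$ over $L$. Likewise $\eta:=g_{(0,~\frac{1}{pm})}^{12pm}(\theta)\in L$ by Lemma \ref{pin} and $\big(g_{(0,~\frac{1}{pm})}^{12m}(\theta)\big)^p=\eta$ with $[F_2:L]=p$, so taking $\xi=\eta$ and $\sqrt[p]{\xi}=g_{(0,~\frac{1}{pm})}^{12m}(\theta)$ in Lemma \ref{lemma1} shows that the conjugates over $L$ of $\sum_{s=0}^{p-1}g_{(0,~\frac{1}{pm})}^{12ms}(\theta)$ form a normal basis of $F_2$ over $L$. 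Applying Lemma \ref{lemma2} to $F_1,~F_2$ (with $F_1\cap F_2=L$) then gives that the conjugates over $K_{(pm)}$ of
\begin{equation*}
\bigg(\sum_{s=0}^{p-1}\zeta_{p^2}^s\bigg)\bigg(\sum_{s=0}^{p-1}g_{(0,~\frac{1}{pm})}^{12ms}(\theta)\bigg)
\end{equation*}
form a normal basis of $F_1F_2=K_{(p^2m)}$ over $K_{(pm)}$.

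Finally, the displayed element is an algebraic integer: $\sum_{s=0}^{p-1}\zeta_{p^2}^s$ is a sum of roots of unity, and $\sum_{s=0}^{p-1}g_{(0,~\frac{1}{pm})}^{12ms}(\theta)$ is an algebraic integer just as in the proof of Theorem \ref{main1}, because $g_{(0,~\frac{1}{pm})}(\tau)$ is integral over $\mathbb{Z}[j(\tau)]$ by Proposition \ref{transformation}(i) and $j(\theta)$ is an algebraic integer; a product of algebraic integers is an algebraic integer. The only step demanding any care is the linear disjointness $F_1\cap F_2=K_{(pm)}$ together with $F_1F_2=K_{(p^2m)}$; but the first follows at once from $F_1\neq F_2$ and the structure $\Gamma\cong(\mathbb{Z}/p\mathbb{Z})^2$, and the second is exactly Lemma \ref{composite}, so nothing beyond routine bookkeeping is needed.
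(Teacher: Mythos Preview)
Your proof is correct and follows essentially the same approach as the paper: split $K_{(p^2m)}/K_{(pm)}$ into the two cyclic degree-$p$ subextensions $F_1=K_{(pm)}(\zeta_{p^2})$ and $F_2=K_{(pm)}\big(g_{(0,\frac{1}{pm})}^{12m}(\theta)\big)$, apply Lemma~\ref{lemma1} to each, and combine via Lemma~\ref{lemma2}. The only difference is in bookkeeping: the paper gets $[F_s:K_{(pm)}]=p$ and $F_1\cap F_2=K_{(pm)}$ in one stroke from $[F_s:K_{(pm)}]\leq p$ (Lemma~\ref{pin}) together with $F_1F_2=K_{(p^2m)}$ (Lemma~\ref{composite}) by a degree count, whereas you verify the degrees directly from the Galois action and Theorem~\ref{F} and then use the $(\mathbb{Z}/p\mathbb{Z})^2$ structure for disjointness.
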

\begin{proof}
If $F_1=K_{(pm)}(\zeta_{p^2})$ and
$F_2=K_{(pm)}\big(g_{(0,~\frac{1}{pm})}^{12m}(\theta)\big)$, then
$[F_s:K_{(pm)}]\leq p$ for $s=1,~2$ by Lemma \ref{pin}. On the other
hand, Lemma \ref{composite} shows $F_1F_2=K_{(p^2m)}$, from which we
get $F_1\cap F_2=K_{(pm)}$ and $[F_s:K_{(pm)}]=p$ for $s=1,~2$.
Hence the conjugates of $\sum_{s=0}^{p-1}\zeta_{p^2}^s$ and
$\sum_{s=0}^{p-1}g_{(0,~\frac{1}{pm})}^{12ms}(\theta)$ over
$K_{(pm)}$ form normal bases of $F_1$ and $F_2$, respectively, by
Lemmas \ref{lemma1} and \ref{pin}. And, the theorem follows from
Lemma \ref{lemma2}.
\end{proof}

\section{Galois module structure}\label{section7}

Let $L$ be a number field and $p$ be an odd prime. We say that an
extension $L_\infty/L$ is a \textit{$\mathbb{Z}_p$-extension} of $L$
if there exists a sequence of cyclic extensions of $L$
\begin{equation*}
L=L_0\subset L_1\subset\cdots\subset L_n\subset\cdots\subset
L_\infty=\cup_{n=0}^\infty L_n
\end{equation*}
with $\mathrm{Gal}(L_n/L)\cong\mathbb{Z}/p^n\mathbb{Z}$. Then
$\mathrm{Gal}(L_\infty/L)\cong\mathbb{Z}_p$ and it is well-known
that $L_\infty/L$ is unramified outside $p$ (\cite{Washington}
Proposition 13.2). And, Greenberg (\cite{Greenberg}) has conjectured
that if $L$ is totally real, then the Iwasawa $\lambda$-invariant of
$L_\infty/L$ vanishes.
\par
Denoting the ring of $p$-integers of $L$ by
$\mathcal{O}_L[\frac{1}{p}]$ Kersten-Michali\v{c}ek introduced in
\cite{K-M} that a finite Galois extension $F$ of $L$ has a
\textit{normal $p$-integral basis over $L$} if
$\mathcal{O}_F[\frac{1}{p}]$ is a free
$\mathcal{O}_L[\frac{1}{p}]\mathrm{Gal}(F/L)$-module of rank one. We
then say that a $\mathbb{Z}_p$-extension $L_\infty$ of $L$ has a
\textit{normal basis over $L$} if each $L_n$ has a normal
$p$-integral basis over $L$.
\par
On the other hand, we see from \cite{Komatsu} that there is a
negative data for Greenberg's conjecture. For instance, for a
positive square free integer $d$ with $(\frac{-d}{3})=-1$, let
$L=\mathbb{Q}(\sqrt{3d})$ and $L'=\mathbb{Q}(\sqrt{-d})$. It was
shown in \cite{F-N} and \cite{F-K} that if $3$ divides the class
number of $L$ and if every $\mathbb{Z}_3$-extension of $L'$ has a
normal basis, then the $\lambda$-invariant of the cyclotomic
$\mathbb{Z}_3$-extension of $L$ does not vanish. This suggests a
relation between the existence of normal basis in
$\mathbb{Z}_p$-extension and the Greenberg's conjecture, which
motivates this section.
\par
Now, let $K(\neq\mathbb{Q}(\sqrt{-1}),~\mathbb{Q}(\sqrt{-3}))$ be an
imaginary quadratic field, $p\geq5$ be a prime and $m\geq1$ be an
integer relatively prime to $p$. And, let $n$ and $\ell$ be positive
integers with $n\geq2\ell$. Observe that the extension
$K_{(p^nm)}/K_{(p^\ell m)}$ is unramified outside $p$ (\cite{Cohen}
Chapter 3) and $\zeta_{p^n}\in K_{(p^nm)}$, $\zeta_{p^\ell}\in
K_{(p^\ell m)}$ but $\zeta_{p^{n+1}}\not\in K_{(p^nm)}$,
$\zeta_{p^{\ell+1}}\not\in K_{(p^\ell m)}$ (\cite{K-L} Chapter 9
Lemma 4.3). We shall prove in this section that $K_{(p^nm)}$ has a
normal $p$-integral basis over $K_{(p^\ell m)}$. The special case
for $\ell=1$ and $m=1$ has been done by Komatsu (\cite{Komatsu}).
However, we shall develop it in more comprehensive way by utilizing
(\ref{K_N/H}) and Proposition \ref{transformation} as in the
previous section unlike Komatsu's method via class field theory. As
a corollary we shall determine the existence of normal basis of the
$\mathbb{Z}_p$-extension $K_\infty K_{(p^\ell m)}/K_{(p^\ell m)}$
for $\ell\geq1$ where $K_\infty$ is any $\mathbb{Z}_p$-extension of
$K$.
\par
Let $\theta$ be as in (\ref{theta}) and set
$\min(\theta,~\mathbb{Q})=X^2+B_\theta X+C_\theta$. Then we can
identify the Galois group
$\Gamma=\mathrm{Gal}\big(K_{(p^{2(n-\ell)}m)}/K_{(p^\ell m)}\big)$
with the group
\begin{equation*}
\bigg\{ \gamma=\begin{pmatrix}t-B_\theta s &-C_\theta
s\\s&t\end{pmatrix}\in\mathrm{GL}_2(\mathbb{Z}/p^{2(n-\ell)}m\mathbb{Z})~:~
\gamma\equiv\begin{pmatrix}1&0\\0&1\end{pmatrix}\pmod{p^\ell
m}\bigg\}\big/ \bigg\{\pm\begin{pmatrix}1&0\\0&1\end{pmatrix}\bigg\}
\end{equation*}
by (\ref{K_N/H}) and $\#\Gamma=[K_{(p^{2(n-\ell)}m)}:K_{(p^\ell
m)}]=p^{2\big(2(n-\ell)-\ell\big)}$ by the formula (\ref{degree}).

\begin{lemma}\label{specific}
There exists an element $\beta_0$ of $\mathrm{SL}_2(\mathbb{Z})$
satisfying the property
\begin{equation}\label{property}
\beta_0^{p^{k}}=\begin{pmatrix}
* & * \\  p^{\ell+k}mq_k & *\end{pmatrix}
\equiv\begin{pmatrix}1&0\\0&1\end{pmatrix}\pmod{p^{\ell+k}m}
\qquad(0\leq k\leq2(n-\ell)-\ell)
\end{equation}
for some integers $q_k\not\equiv0\pmod{p}$.
\end{lemma}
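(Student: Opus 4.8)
The plan is to exhibit a matrix of the shape
\begin{equation*}
\beta_0=\begin{pmatrix}1&0\\0&1\end{pmatrix}+p^\ell m\,B,\qquad
B=\begin{pmatrix}x&*\\1&*\end{pmatrix}\in\mathrm{M}_2(\mathbb{Z}),
\end{equation*}
so that automatically $\beta_0\equiv\left(\begin{smallmatrix}1&0\\0&1\end{smallmatrix}\right)\pmod{p^\ell m}$ and the lower-left entry of $\beta_0$ is exactly $p^\ell m$ (whence $q_0=1$). In order that $\beta_0$ be usable in the sequel through (\ref{K_N/H}), I would moreover require that the reduction of $\beta_0$ modulo $M:=p^{2(n-\ell)}m$ lie in $W_{M,~\theta}$, hence represent an element of $\Gamma$; writing $e:=2(n-\ell)-\ell$ (recall $n\geq2\ell$, so $e\geq\ell\geq1$), this dictates taking the top row of $\beta_0$ to be $\big(1+p^\ell m x,\ -C_\theta p^\ell m+M\beta\big)$ and the lower-right entry to be $1+p^\ell m x+B_\theta p^\ell m+M\delta$, for integers $x,~\beta,~\delta$ yet to be determined.

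The key step is to choose $x,~\beta,~\delta$ so that $\det\beta_0=1$ holds exactly over $\mathbb{Z}$. Setting $a:=1+p^\ell m x$, a direct computation gives
\begin{equation*}
\det\beta_0=\big(a^2+B_\theta p^\ell m\,a+C_\theta p^{2\ell}m^2\big)+M\,(a\delta-\beta p^\ell m),
\end{equation*}
and the first summand equals $1+p^\ell m\,g(x)$, where $g(x)=(2x+B_\theta)+p^\ell m\,(x^2+B_\theta x+C_\theta)$. Since $p$ is odd and $\ell\geq1$, one has $g(x)\equiv 2x+B_\theta\pmod p$, which has a simple zero, so Hensel's lemma will produce an integer $x$ with $g(x)\equiv0\pmod{p^{\,e}}$. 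For such $x$ we get $p^\ell m\,g(x)\equiv0\pmod{M}$, i.e.\ $a^2+B_\theta p^\ell m\,a+C_\theta p^{2\ell}m^2\equiv1\pmod{M}$; and since $a\equiv1\pmod{p^\ell m}$, the integer $a$ is coprime to $p^\ell m$, so the linear equation $a\delta-\beta p^\ell m=\big(1-a^2-B_\theta p^\ell m\,a-C_\theta p^{2\ell}m^2\big)/M$ is solvable in integers $\delta,~\beta$. Any such solution makes $\det\beta_0=1$ on the nose, while by construction the reduction of $\beta_0$ modulo $M$ is the element of $W_{M,~\theta}$ with $s=p^\ell m$, $t=a+B_\theta p^\ell m$, which lies in $\Gamma$ because $s\equiv0$ and $t\equiv1\pmod{p^\ell m}$. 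I expect this bookkeeping — securing $\det\beta_0=1$ exactly rather than merely modulo $M$, while pinning the reduction inside $W_{M,~\theta}$ — to be the main (and essentially the only) obstacle.

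It then remains to read off (\ref{property}). With $\beta_0=I+p^\ell m B$ and $B=\left(\begin{smallmatrix}x&-C_\theta+p^{e}\beta\\1&x+B_\theta+p^{e}\delta\end{smallmatrix}\right)\in\mathrm{M}_2(\mathbb{Z})$ (note $B_{21}=1$), the binomial theorem gives
\begin{equation*}
\beta_0^{p^k}=I+p^{k+\ell}m\,B+\sum_{j=2}^{p^k}\binom{p^k}{j}(p^\ell m)^j\,B^j.
\end{equation*}
A standard valuation count gives $v_p\!\left(\binom{p^k}{j}(p^\ell m)^j\right)=(k-v_p(j))+\ell j$ (with $v_p$ the $p$-adic valuation), and for $j\geq2$, $p\geq5$, $\ell\geq1$ one checks $\ell(j-1)\geq v_p(j)+1$; hence every term with $j\geq2$ is divisible by $p^{k+\ell+1}m$, so
\begin{equation*}
\beta_0^{p^k}\equiv I+p^{k+\ell}m\,B\pmod{p^{k+\ell+1}m}.
\end{equation*}
In particular $\beta_0^{p^k}\equiv I\pmod{p^{k+\ell}m}$, and the lower-left entry of $\beta_0^{p^k}$ equals $p^{k+\ell}m\,q_k$ with $q_k\equiv B_{21}=1\pmod p$, so $q_k\not\equiv0\pmod p$. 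Since this holds for every $k\geq0$, it holds in particular for $0\leq k\leq 2(n-\ell)-\ell$, which is the assertion. (The literal statement of the lemma is already witnessed by the unipotent matrix $\left(\begin{smallmatrix}1&0\\p^\ell m&1\end{smallmatrix}\right)$, for which $q_k=1$ trivially; the extra care in the construction above only serves to make the resulting $\beta_0$ representable, via (\ref{K_N/H}), as an element of $\Gamma$, which is what the subsequent argument requires.)
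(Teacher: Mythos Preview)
Your proof is correct and follows essentially the same route as the paper's: both start from an element of $W_{M,\theta}$ with $s=p^\ell m$, use Hensel's lemma (on the same linear-mod-$p$ polynomial) to force the determinant to be $1$ modulo $M=p^{2(n-\ell)}m$, and then lift to $\mathrm{SL}_2(\mathbb{Z})$. The only cosmetic differences are that the paper invokes the surjectivity of $\mathrm{SL}_2(\mathbb{Z})\to\mathrm{SL}_2(\mathbb{Z}/M\mathbb{Z})$ for the lift while you solve the linear Diophantine equation $a\delta-\beta p^\ell m=\cdots$ by hand, and that the paper dismisses the verification of (\ref{property}) as ``routine'' whereas you carry it out explicitly via the binomial expansion and the valuation estimate $\ell(j-1)\geq v_p(j)+1$. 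Your parenthetical remark that the unipotent matrix $\left(\begin{smallmatrix}1&0\\p^\ell m&1\end{smallmatrix}\right)$ already witnesses the lemma \emph{as literally stated} is a sharp observation the paper does not make; indeed the real content of the construction is that $\beta_0$ reduce into $W_{M,\theta}$, which is what the subsequent argument uses.
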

\begin{proof}
Consider an integral matrix $\beta=\left(\begin{smallmatrix}1+p^\ell
mx-B_\theta p^\ell m & -C_\theta p^\ell m\\p^\ell m & 1+p^\ell
mx\end{smallmatrix}\right)$ for an undetermined integer $x$. Then
the condition $\det(\beta)\equiv1\pmod{p^{2(n-\ell)}m}$ is
equivalent to
\begin{equation}\label{ocong}
f(x)=p^\ell m^2x^2+(2m-B_\theta p^\ell m^2)x+C_\theta p^\ell m^2
-B_\theta m\equiv0\pmod{p^{2(n-\ell)-\ell}}.
\end{equation}
Since $2m-B_\theta p^\ell m^2\not\equiv0\pmod{p}$, the equation
$f(x)\equiv0\pmod{p}$ has a solution. Furthermore, since the
derivative $f'(x)=2m-B_\theta p^\ell m^2\not\equiv0\pmod{p}$, we
have an integer solution $x=x_0$ of the congruence equation
(\ref{ocong}) by Hensel's lemma. Let $\beta_0$ be a preimage of
$\left(\begin{smallmatrix}1+p^\ell mx_0-B_\theta p^\ell m &
-C_\theta p^\ell m\\p^\ell m&1+p^\ell mx_0\end{smallmatrix}\right)$
via the natural surjection
$\mathrm{SL}_2(\mathbb{Z})\rightarrow\mathrm{SL}_2(\mathbb{Z}/p^{2(n-\ell)}m\mathbb{Z})$.
Then it is routine to check that $\beta_0$ satisfies the property
(\ref{property}).
\end{proof}

Set $\alpha=\left(\begin{smallmatrix}1+p^\ell m & 0\\0&1+p^\ell
m\end{smallmatrix}\right)$ and $\beta=\beta_0$ in Lemma
\ref{specific}. Now that they have the order $p^{2(n-\ell)-\ell}$ in
$\Gamma$ and $\#\Gamma=p^{2\big(2(n-\ell)-\ell\big)}$, we derive
\begin{equation*}
\Gamma=\langle\alpha\rangle\times\langle\beta\rangle.
\end{equation*}
as a direct product. And, we get
\begin{equation}\label{galois}
\mathrm{Gal}\big(K_{(p^{2(n-\ell)}m)}/K_{(p^{n}m)}\big)=\langle\alpha^{p^{n-\ell}}\rangle\times\langle\beta^{p^{n-\ell}}\rangle
\end{equation}
by (\ref{K_N/H}). Let us define a function
\begin{equation*}
g(\tau)=\prod_{s=0}^{p^{n-2\ell}-1}g_{(0,~\frac{1}{p^{n-\ell}m})\beta^s}^{12m}(\tau).
\end{equation*}
Since each factor
$g_{(0,~\frac{1}{p^{n-\ell}m})\beta^s}^{12m}(\tau)$ lies in
$\mathcal{F}_{p^{2(n-\ell)}m}$ by Proposition \ref{modularity}, the
singular value $g(\theta)$ belongs to $K_{(p^{2(n-\ell)}m)}$ by
(\ref{K_N}).

\begin{lemma}\label{K_pn}
$K_{(p^nm)}=K_{(p^\ell m)}\big(\zeta_{p^n},~g(\theta)\big)$
\end{lemma}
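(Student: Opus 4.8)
The plan is to carry out the computation inside $L:=K_{(p^{2(n-\ell)}m)}$, for which the preceding discussion already supplies the identification $\Gamma=\mathrm{Gal}(L/K_{(p^\ell m)})=\langle\alpha\rangle\times\langle\beta\rangle$, with both factors cyclic of order $p^{2(n-\ell)-\ell}$, together with $\mathrm{Gal}(L/K_{(p^nm)})=\langle\alpha^{p^{n-\ell}}\rangle\times\langle\beta^{p^{n-\ell}}\rangle$ by (\ref{galois}). Since $K_{(p^\ell m)}$, $\zeta_{p^n}$ and $g(\theta)$ all lie in $L$ (the last by Proposition \ref{modularity} and (\ref{K_N})), putting $M=K_{(p^\ell m)}\big(\zeta_{p^n},~g(\theta)\big)$ it suffices by Galois theory to show that the subgroup of $\Gamma$ fixing both $\zeta_{p^n}$ and $g(\theta)$ equals $\langle\alpha^{p^{n-\ell}}\rangle\times\langle\beta^{p^{n-\ell}}\rangle$; equivalently, that $\alpha^a\beta^b$ fixes $\zeta_{p^n}$ and $g(\theta)$ precisely when $p^{n-\ell}\mid a$ and $p^{n-\ell}\mid b$.

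First I would determine the action on $\zeta_{p^n}$. As $\beta\in\mathrm{SL}_2(\mathbb{Z})$, it acts on $\mathcal{F}_{p^{2(n-\ell)}m}$ by composition (\ref{second}) and so fixes every constant; and writing the scalar matrix $\alpha=(1+p^\ell m)I$ in the form $\left(\begin{smallmatrix}1&0\\0&(1+p^\ell m)^2\end{smallmatrix}\right)\left(\begin{smallmatrix}1+p^\ell m&0\\0&(1+p^\ell m)^{-1}\end{smallmatrix}\right)$ shows, via (\ref{first}), that $\zeta_{p^n}\mapsto\zeta_{p^n}^{(1+p^\ell m)^2}$. Since $p$ is odd one has $v_p\big((1+p^\ell m)^{2a}-1\big)=\ell+v_p(a)$, so $\alpha^a\beta^b$ fixes $\zeta_{p^n}$ if and only if $p^{n-\ell}\mid a$. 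Next, for $g(\tau)=\prod_{s=0}^{p^{n-2\ell}-1}g_{(0,~1/(p^{n-\ell}m))\beta^s}^{12m}(\tau)$, Proposition \ref{transformation}(iii) gives $g(\tau)\circ\beta=\prod_{s=1}^{p^{n-2\ell}}g_{(0,~1/(p^{n-\ell}m))\beta^s}^{12m}(\tau)$, so composition with $\beta$ cyclically permutes the factors; the only nontrivial contribution comes from the wrap-around index $s=p^{n-2\ell}$, which by Lemma \ref{specific} (with $k=n-2\ell$) satisfies $(0,~1/(p^{n-\ell}m))\beta^{p^{n-2\ell}}\equiv(0,~1/(p^{n-\ell}m))\pmod{\mathbb{Z}^2}$ with first coordinate shifted by $q_{n-2\ell}$, so Proposition \ref{transformation}(iv) yields $g(\tau)\circ\beta=\zeta_{p^{n-\ell}}^{-6q_{n-2\ell}}g(\tau)$. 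Because $p\geq5$ and $p\nmid q_{n-2\ell}$, the factor $\zeta_{p^{n-\ell}}^{-6q_{n-2\ell}}$ is a primitive $p^{n-\ell}$-th root of unity; iterating gives $g(\theta)^{\beta^b}=\zeta_{p^{n-\ell}}^{-6bq_{n-2\ell}}g(\theta)$, so $\beta^b$ fixes $g(\theta)$ exactly when $p^{n-\ell}\mid b$.

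It remains to compute $\alpha$ on $g(\theta)$. Again I would split $\alpha=(1+p^\ell m)I$ into its $\mathrm{SL}_2$-part (acting by composition through Proposition \ref{transformation}(iii)) and its diagonal part (acting on the Fourier expansion through (\ref{first})), so that on each factor $\alpha$ acts by rescaling the index vector $(0,~1/(p^{n-\ell}m))\beta^s$ by $1+p^\ell m$. The decisive arithmetic fact is $(1+p^\ell m)^{p^{n-\ell}}\equiv1\pmod{p^nm}$, which makes $\big((1+p^\ell m)^{p^{n-\ell}}-1\big)(0,~1/(p^{n-\ell}m))\beta^s$ an integral vector that is a scalar multiple of $(0,~1/(p^{n-\ell}m))\beta^s$; the bilinear expression appearing in Proposition \ref{transformation}(iv) therefore vanishes by colinearity, and $\alpha^{p^{n-\ell}}$ fixes each factor, hence fixes $g(\theta)$. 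Consequently, when $p^{n-\ell}\mid a$ the automorphism $\alpha^a$ fixes $g(\theta)$, so $\alpha^a\beta^b$ fixes $g(\theta)$ if and only if $\zeta_{p^{n-\ell}}^{-6bq_{n-2\ell}}=1$, i.e. $p^{n-\ell}\mid b$. Together with the cyclotomic computation this identifies the fixer of $\{\zeta_{p^n},~g(\theta)\}$ in $\Gamma$ with $\langle\alpha^{p^{n-\ell}}\rangle\times\langle\beta^{p^{n-\ell}}\rangle=\mathrm{Gal}(L/K_{(p^nm)})$, whence $M=K_{(p^nm)}$.

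The step I expect to be the main obstacle is the action of $\alpha$ on $g(\theta)$: in contrast to $\beta$, the matrix $\alpha$ is not in $\mathrm{SL}_2(\mathbb{Z})$, so Proposition \ref{transformation}(iii) cannot be applied directly; one must peel off the action (\ref{first}) of the diagonal part on the Fourier coefficients and then chase the root of unity from Proposition \ref{transformation}(iv) through the twist by $\sigma_d$, checking that $\alpha^{p^{n-\ell}}$ really acts as the identity and not merely up to a root of unity. This is also the point where the hypothesis $n\geq2\ell$ enters essentially — it guarantees $p^n\mid p^{2(n-\ell)}m$ (so that $\zeta_{p^n}\in L$), that $p^{n-2\ell}$ is a legitimate exponent in Lemma \ref{specific}, and that $v_p\big((1+p^\ell m)^{p^{n-\ell}}-1\big)=n$ — whereas $\gcd(p,m)=1$ is what makes $\zeta_{p^{n-\ell}}^{-6q_{n-2\ell}}$ of exact order $p^{n-\ell}$; the remaining bookkeeping is routine.
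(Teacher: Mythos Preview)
Your argument is correct and follows the same overall Galois-theoretic strategy as the paper: compute the actions of $\alpha$ and $\beta$ on $\zeta_{p^n}$ and on $g(\theta)$, and identify the stabilizer. The treatment of $\beta$ (telescoping product plus Proposition~\ref{transformation}(iv)) is exactly the paper's computation in (\ref{beta}). The one genuine technical difference is in the $\alpha$-step. You argue that $\alpha$ rescales each index vector $r_s=(0,1/(p^{n-\ell}m))\beta^s$ by $1+p^{\ell}m$, and then use colinearity of the shift to kill the bilinear term in Proposition~\ref{transformation}(iv); this is correct (the extra integer shift coming from lifting the $\mathrm{SL}_2$-part lies in $p^{n-\ell}\mathbb{Z}^2$, so its contribution is a $12m$-th power of a $2m$-th root of unity, hence trivial), but it is worth writing out explicitly rather than asserting. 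The paper instead commutes $\alpha^{p^{n-2\ell}}$ past $\beta^s$ and acts only on the seed $g_{(0,1/(p^{n-\ell}m))}^{12m}$, whose Fourier coefficients already lie in $\mathbb{Q}(\zeta_{p^{n-\ell}m})$, so the diagonal part acts trivially with no bookkeeping of roots of unity. Note also that the paper establishes the stronger fact that $\alpha^{p^{n-2\ell}}$ (not merely $\alpha^{p^{n-\ell}}$) fixes $g(\theta)$; this sharper invariance is used later in the proof of Theorem~\ref{Ko}, so you would need to upgrade your statement there.
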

\begin{proof}
By the property (\ref{property}) of $\beta$, $\beta^{p^{n-2\ell}}$
is of the form $\left(\begin{smallmatrix}1+p^{n-\ell}mA &
p^{n-\ell}mB\\p^{n-\ell}mC & 1+p^{n-\ell}mD\end{smallmatrix}\right)$
for some integers $A,~B,~C,~D$ with $C\not\equiv0\pmod{p}$. We then
deduce by (\ref{K_N/H}) and (\ref{second}) that
\begin{eqnarray}\label{beta}
g(\theta)^\beta&=&\prod_{s=0}^{p^{n-2\ell}-1}\bigg(g_{(0,~\frac{1}{p^{n-\ell}m})\beta^s}^{12m}(\theta)\bigg)^\beta
=\prod_{s=0}^{p^{n-2\ell}-1}g_{(0,~\frac{1}{p^{n-\ell}m})\beta^s\beta}^{12m}(\theta)\quad\textrm{by Proposition \ref{transformation}(iii)}\nonumber\\
&=&\frac{g^{12m}_{(0,~\frac{1}{p^{n-\ell}m})\beta^{p^{n-2\ell}}}(\theta)}{g^{12m}_{(0,~\frac{1}{p^{n-\ell}m})}(\theta)}~g(\theta)=
\frac{g^{12m}_{(C,~\frac{1}{p^{n-\ell}m}+D)}(\theta)}{g^{12m}_{(0,~\frac{1}{p^{n-\ell}m})}(\theta)}~g(\theta)\nonumber\\
&=&\zeta_{p^{n-\ell}}^{-6C}g(\theta)\quad\textrm{by Proposition
\ref{transformation}(iv)}.
\end{eqnarray}
In particular, $g(\theta)^{p^{n-\ell}}$ is fixed by $\beta$ and
$g(\theta)$ is fixed by $\beta^{p^{n-\ell}}$ because $\beta$ fixes
$\zeta_{p^{n-\ell}}$ by (\ref{K_N/H}) and (\ref{second}). Note that
$\alpha^{p^{n-2\ell}}=\left(\begin{smallmatrix}1+p^{n-\ell}mE &
0\\0&1+p^{n-\ell}mE\end{smallmatrix}\right)$ for some integer $E$.
As an element of $\Gamma$ we can decompose $\alpha^{p^{n-2\ell}}$
into
\begin{equation*}
\alpha^{p^{n-2\ell}}=\alpha_1\cdot\alpha_2=\begin{pmatrix}1&0\\0&(1+p^{n-\ell}mE)^2\end{pmatrix}
\begin{pmatrix}1+p^{n-\ell}mE+p^{2(n-\ell)}mA' &
p^{2(n-\ell)}mB'\\p^{2(n-\ell)}mC' &
E'+p^{2(n-\ell)}mD'\end{pmatrix}
\end{equation*}
for some integers $A',~B',~C',~D',~E'$ such that
$(1+p^{n-\ell}mE)E'\equiv1\pmod{p^{2(n-\ell)}m}$ and
$\alpha_2\in\mathrm{SL}_2(\mathbb{Z})$. Hence, we get by
(\ref{K_N/H}), (\ref{first}), (\ref{second}) and Proposition
\ref{transformation}(iii) that
\begin{eqnarray}\label{alpha}
g(\theta)^{\alpha^{p^{n-2\ell}}}&=&
\prod_{s=0}^{p^{n-2\ell}-1}\bigg(g_{(0,~\frac{1}{p^{n-\ell}m})\beta^s}^{12m}(\theta)\bigg)^{\alpha^{p^{n-2\ell}}}
=\prod_{s=0}^{p^{n-2\ell}-1}\bigg(g_{(0,~\frac{1}{p^{n-\ell}m})}^{12m}(\theta)\bigg)^{\beta^s{\alpha^{p^{n-2\ell}}}}\nonumber\\
&=&\prod_{s=0}^{p^{n-2\ell}-1}\bigg(g_{(0,~\frac{1}{p^{n-\ell}m})}^{12m}(\theta)\bigg)^{{\alpha^{p^{n-2\ell}}}\beta^s}
=\prod_{s=0}^{p^{n-2\ell}-1}\bigg(g_{(0,~\frac{1}{p^{n-\ell}m})}^{12m}(\theta)\bigg)^{{\alpha_1\alpha_2}\beta^s}\nonumber\\
&=&\prod_{s=0}^{p^{n-2\ell}-1}\bigg(g_{(0,~\frac{1}{p^{n-\ell}m})}^{12m}(\theta)\bigg)^{{\alpha_2}\beta^s}\nonumber\\
&&\textrm{because $g_{(0,~\frac{1}{p^{n-\ell}m})}^{12m}(\tau)$ has
Fourier coefficients in $\mathbb{Q}(\zeta_{p^{n-\ell}m})$
by definition (\ref{FourierSiegel})}\nonumber\\
&=&\prod_{s=0}^{p^{n-2\ell}-1}\bigg(g_{(0,~\frac{1}{p^{n-\ell}m})\alpha_2}^{12m}(\theta)\bigg)^{\beta^s}
=\prod_{s=0}^{p^{n-2\ell}-1}\bigg(g_{(p^{n-\ell}C',~\frac{E'}{p^{n-\ell}m}+p^{n-\ell}D')}^{12m}(\theta)\bigg)^{\beta^s}
\nonumber\\
&=&\prod_{s=0}^{p^{n-2\ell}-1}\bigg(g_{(0,~\frac{1}{p^{n-\ell}m})}^{12m}(\theta)\bigg)^{\beta^s}\nonumber\\
&&\textrm{by the fact $E'\equiv1\pmod{p^{n-\ell}m}$ and Proposition
\ref{transformation}(iv)}\nonumber\\
&=&\prod_{s=0}^{p^{n-2\ell}-1}g_{(0,~\frac{1}{p^{n-\ell}m})\beta^s}^{12m}(\theta)=g(\theta).
\end{eqnarray}
Observe that in particular, $g(\theta)$ is fixed by
$\alpha^{p^{n-\ell}}$ and hence by
$\langle\alpha^{p^{n-\ell}}\rangle
\times\langle\beta^{p^{n-\ell}}\rangle$. Thus $g(\theta)$ belongs to
$K_{(p^nm)}$ by (\ref{galois}).
\par
On the other hand, we see from (\ref{K_N/H}) that
$\mathrm{Gal}\big(K_{(p^nm)}/K_{(p^\ell
m)}\big)=\langle\alpha\rangle\times\langle\beta\rangle$ in
$\mathrm{GL}_2(\mathbb{Z}/p^nm\mathbb{Z})/\big\{\pm\left(\begin{smallmatrix}1&0\\0&1\end{smallmatrix}\right)\big\}$
with $\alpha$ and $\beta$ of order $p^{n-\ell}$. Suppose that
$\alpha^A\beta^B$ fixes both $\zeta_{p^n}$ and $g(\theta)$ for some
$0\leq A,~B<p^{n-\ell}$. Since
$(\zeta_{p^n})^{\alpha^A\beta^B}=(\zeta_{p^n})^{\det(\alpha^A)}=\zeta_{p^n}^{(1+p^\ell
m)^{2A}}$ by (\ref{K_N/H}), (\ref{first}) and (\ref{second}), we
have $A=0$. It then follows $B=0$ from (\ref{beta}). Therefore we
conclude that $K_{(p^nm)}=K_{(p^\ell
m)}\big(\zeta_{p^n},~g(\theta)\big)$ by Galois theory.
\end{proof}

Now we are in the following situation:
\begin{eqnarray*}
\begindc{\commdiag}
\obj(5,1)[A]{$K_{(p^\ell m)}$} \obj(2,3)[B]{$K_{(p^\ell
m)}(\zeta_{p^n})$} \obj(8,3)[C]{$K_{(p^\ell m)}\big(g(\theta)\big)$}
\obj(5,5)[D]{$K_{(p^nm)}$} \obj(7,5)[E]{$\quad=K_{(p^\ell
m)}\big(\zeta_{p^n},~g(\theta)\big)$} \mor{A}{B}{\tiny cyclic of
degree $p^{n-\ell}$}[\atleft,\solidline] \mor{A}{C}{\tiny cyclic of
degree $p^{n-\ell}$}[\atright,\solidline] \mor{B}{D}{\tiny cyclic of
degree $p^{n-\ell}$}[\atleft,\solidline] \mor{C}{D}{\tiny cyclic of
degree $p^{n-\ell}$}[\atright,\solidline]
\enddc
\end{eqnarray*}
Here we see $K_{(p^\ell m)}(\zeta_{p^n})\cap K_{(p^\ell
m)}\big(g(\theta)\big)=K_{(p^\ell m)}$ by analyzing the actions of
$\alpha$ and $\beta$ in the proof of Lemma \ref{K_pn}. Then we are
ready to attain our aim by means of the following two lemmas:

\begin{lemma}\label{help1}
Let $L$ be a number field and $F/L$ be a cyclic extension of a prime
power degree $n=p^s$ which is unramified outside $p$.
\begin{itemize}
\item[(i)] When $\zeta_n\in L$, $F$ has a normal $p$-integral
basis over $L$ if and only if $F=L(\sqrt[n]{\xi})$ for some $\xi\in
\mathcal{O}_L[\frac{1}{p}]^*$.
\item[(ii)] When $\zeta_n\not\in L$, $F$ has a normal $p$-integral
basis over $L$ if and only if $F(\zeta_n)$ has a normal $p$-integral
basis over $L(\zeta_n)$.
\end{itemize}
\end{lemma}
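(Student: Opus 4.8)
The plan is to recast everything in terms of commutative ring theory. Put $R=\mathcal{O}_L[\frac{1}{p}]$ and $S=\mathcal{O}_F[\frac{1}{p}]$, and write $G=\mathrm{Gal}(F/L)\cong\mathbb{Z}/n\mathbb{Z}$ with $n=p^s$. Since $F/L$ is unramified outside $p$, every maximal ideal of $R$ is unramified in $S$, so $S$ is a finite \'etale $G$-Galois extension of $R$, and by the very definition of a normal $p$-integral basis, $F$ has one over $L$ if and only if $S$ is free of rank one over the group ring $R[G]$. I shall use repeatedly that $n=p^s$, and (in (i)) the root of unity $\zeta_n$, are units of $R$.

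For (i): since $\zeta_n\in\mathcal{O}_L^\ast\subseteq R^\ast$ and $n\in R^\ast$, Kummer theory over the ring $R$ applies. The $G$-action splits $S=\bigoplus_{i=0}^{n-1}S_i$ into isotypic components, $\sigma$ acting on $S_i$ by $\zeta_n^{\,i}$ for a fixed generator $\sigma$ of $G$; here $S_0=R$, each $S_i$ is an invertible $R$-module, multiplication induces isomorphisms $S_1^{\otimes i}\xrightarrow{\ \sim\ }S_i$ and $S_1^{\otimes n}\xrightarrow{\ \sim\ }R$, and $i\mapsto[S_i]=[S_1]^i$ is a homomorphism $G\to\mathrm{Pic}(R)$. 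Hence $S\cong R[G]$ as $R[G]$-modules if and only if every $S_i$ is free, i.e. if and only if $[S_1]=0$ in $\mathrm{Pic}(R)=\mathrm{Pic}(\mathcal{O}_L[\frac{1}{p}])$. If $[S_1]=0$, a free generator $\alpha$ of $S_1$ gives $R[\alpha]=\bigoplus_iR\alpha^i=S$ and $\alpha^n=\xi\in R$; \'etaleness of $S=R[X]/(X^n-\xi)$ forces the discriminant $\pm n^n\xi^{\,n-1}$ to be a unit, whence $\xi\in R^\ast$ and $F=L(\alpha)=L(\sqrt[n]{\xi})$ with $\xi\in\mathcal{O}_L[\frac{1}{p}]^\ast$. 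Conversely, if $F=L(\sqrt[n]{\xi})$ with $\xi\in R^\ast$, then $R[X]/(X^n-\xi)$ has unit discriminant, hence is \'etale and integrally closed, so it equals $S$; thus $S_1=R\cdot\sqrt[n]{\xi}$ is free, $S\cong R[G]$, and an explicit normal $p$-integral basis is given, as in Lemma \ref{lemma1}, by the conjugates of $\sum_{s=0}^{n-1}(\sqrt[n]{\xi})^s$ (the transition matrix to $1,\sqrt[n]{\xi},\dots,(\sqrt[n]{\xi})^{n-1}$ being a Vandermonde matrix in the powers of $\zeta_n$, whose determinant $\prod_{i<j}(\zeta_n^{\,j}-\zeta_n^{\,i})$ is a unit of $R$ because $n$ is a power of $p$).

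For (ii): I would proceed by Galois descent along the auxiliary extension $L'=L(\zeta_n)$, putting $R'=\mathcal{O}_{L'}[\frac{1}{p}]$ and $\Delta=\mathrm{Gal}(L'/L)$. Since $L'/L$ is again unramified outside $p$, $R'/R$ is finite \'etale $\Delta$-Galois, in particular faithfully flat, and $\mathcal{O}_{F(\zeta_n)}[\frac{1}{p}]\cong S\otimes_RR'$ as $R'$-algebras with their natural $\Delta$-semilinear action (here one first reduces to the case $F\cap L'=L$, so that $G\cong\mathrm{Gal}(F(\zeta_n)/L')$; this holds automatically in the intended application, where $\zeta_n$ already lies in the companion intermediate field). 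The implication from "$F$ has a normal $p$-integral basis over $L$" to "$F(\zeta_n)$ has one over $L(\zeta_n)$" is then immediate by base change. For the converse, faithfully flat descent shows that $S$ is a finitely generated projective $R[G]$-module whose base change to $R'[G]$ is free of rank one, and it remains to upgrade this to freeness: fixing an $R'[G]$-basis of $S\otimes_RR'$ turns the descent datum into a $1$-cocycle $(c_\delta)_{\delta\in\Delta}$ valued in $(R'[G])^\ast$, and $S\cong R[G]$ is equivalent to the vanishing of its class in $H^1(\Delta,(R'[G])^\ast)$. I would kill this class using the Kummer description of part (i) — the $p$-unit generator $\xi$ can be adjusted by $n$-th powers of $p$-units so as to become $\Delta$-equivariant, the obstruction being a Hilbert~90 type class that vanishes — or else simply invoke the structural results of \cite{K-M} on normal $p$-integral bases.

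The step I expect to be the main obstacle is precisely this last one: the descent of \emph{freeness} (as opposed to projectivity, which is formal) in part (ii). Passing from "projective of rank one over $R[G]$" to "free over $R[G]$" amounts to trivializing the class in $H^1(\Delta,(R'[G])^\ast)$, equivalently to choosing the Kummer $p$-unit compatibly with $\mathrm{Gal}(L(\zeta_n)/L)$; together with the preliminary bookkeeping needed when $F$ and $L(\zeta_n)$ fail to be linearly disjoint over $L$, this is where the genuine content of the lemma lies, everything else reducing to manipulation of \'etale Galois extensions of rings of $p$-integers and the elementary Kummer theory of part (i).
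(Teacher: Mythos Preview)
The paper does not prove this lemma at all: its ``proof'' is the single line \emph{See \cite{Greither} Chapter 0 Proposition 6.5 and Chapter I Theorem 2.1}. So there is no argument in the paper to compare against; what you have written is essentially a reconstruction of the cited material in Greither's monograph, and your framework (recasting $S/R$ as an \'etale $G$-Galois extension of rings of $p$-integers, then doing Kummer theory over $R$ for (i) and faithfully flat descent along $R\to R'$ for (ii)) is exactly the setting Greither works in.

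Your treatment of (i) is complete and correct: the isotypic decomposition $S=\bigoplus S_i$ with $S_i\cong S_1^{\otimes i}$ and the equivalence ``$S$ free over $R[G]$ $\Leftrightarrow$ $[S_1]=0$ in $\mathrm{Pic}(R)$ $\Leftrightarrow$ $S=R[\sqrt[n]{\xi}\,]$ with $\xi\in R^\ast$'' is precisely Greither's Chapter~0, Proposition~6.5.

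For (ii) you have correctly located the only nontrivial point and honestly flagged it as incomplete. Faithfully flat descent gives that $S$ is projective of rank one over $R[G]$ once $S\otimes_R R'$ is free over $R'[G]$; upgrading projectivity to freeness is the content of Greither's Chapter~I, Theorem~2.1. Your proposed route---trivialize the descent cocycle in $H^1(\Delta,(R'[G])^\ast)$ by making the Kummer generator $\xi$ compatible with the $\Delta$-action---is workable, but as written it is only a sketch: you would need to exploit that $\Delta$ acts on the character group of $G$ via $\chi\mapsto\chi^a$ (where $\delta(\zeta_n)=\zeta_n^{\,a}$), decompose $(R'[G])^\ast$ accordingly, and then invoke a Hilbert~90 argument on each piece. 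The side issue you raise about linear disjointness of $F$ and $L(\zeta_n)$ is also genuine and is handled in Greither's formulation at the level of ring extensions rather than fields. Since the paper itself is content to cite Greither here, the cleanest fix is simply to do the same; if you want a self-contained argument, the missing cohomological computation is a paragraph or two but is not entirely formal.
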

\begin{proof}
See \cite{Greither} Chapter 0 Proposition 6.5 and Chapter I Theorem
2.1.
\end{proof}

\begin{lemma}\label{help2}
Let $L$ be a number field and $F_s/L$ be a cyclic extension which is
unramfied outside $p$ for $s=1,2$. If $F_s$ has a normal
$p$-integral basis over $L$ for $s=1,~2$ and $F_1\cap F_2=L$, then
$F_1F_2$ has a normal $p$-integral basis over $L$.
\end{lemma}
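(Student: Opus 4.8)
The plan is to prove the integral analogue of Lemma \ref{lemma2}: if $\xi_s$ generates a normal $p$-integral basis of $F_s$ over $L$ for $s=1,2$, then $\xi_1\xi_2$ generates one of $F_1F_2$ over $L$. Throughout write $R=\mathcal{O}_L[\tfrac1p]$, $R_s=\mathcal{O}_{F_s}[\tfrac1p]$, $G_s=\mathrm{Gal}(F_s/L)$ and $G=\mathrm{Gal}(F_1F_2/L)$. Since $F_1$ and $F_2$ are Galois over $L$ with $F_1\cap F_2=L$, they are linearly disjoint, so the multiplication map yields a $G$-equivariant isomorphism $F_1\otimes_L F_2\xrightarrow{\sim}F_1F_2$ together with the identification $G\cong G_1\times G_2$. (Only Galois-ness, not cyclicity, is used for the gluing; cyclicity matters only for producing the generators $\xi_s$, e.g. via Lemma \ref{help1}.)

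First I would reduce to a statement about étale $R$-algebras. Because $F_s/L$ is unramified outside $p$, every prime of $R$ — these are exactly the primes of $\mathcal{O}_L$ not dividing $p$ — is unramified in $R_s$, and $R_s$ is finite and flat over $R$; hence $R_s$ is finite étale over the Dedekind domain $R$. Therefore $R_1\otimes_R R_2$ is again finite étale over $R$; being finite projective it is $R$-torsion-free, so it embeds into $(R_1\otimes_R R_2)\otimes_R L=F_1\otimes_L F_2\cong F_1F_2$. Thus $R_1\otimes_R R_2$ is a domain with fraction field $F_1F_2$, and being finite unramified over the Dedekind domain $R$ it is itself Dedekind, in particular integrally closed in $F_1F_2$. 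Since it contains $R$ and is integral over $R$, it must equal the integral closure of $R$ in $F_1F_2$, which (localization commuting with integral closure) is $\mathcal{O}_{F_1F_2}[\tfrac1p]$. In other words, the multiplication map
\[
\mu\colon R_1\otimes_R R_2\longrightarrow \mathcal{O}_{F_1F_2}[\tfrac1p]
\]
is a $G$-equivariant ring isomorphism. I expect this step — identifying $\mathcal{O}_{F_1F_2}[\tfrac1p]$ with $\mathcal{O}_{F_1}[\tfrac1p]\otimes_R\mathcal{O}_{F_2}[\tfrac1p]$ — to be the crux, and it is exactly where both hypotheses ("unramified outside $p$" and $F_1\cap F_2=L$) enter.

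Granting $\mu$, the conclusion is formal. The hypothesis says $R_s$ is free of rank one over $R[G_s]$ with generator $\xi_s$, i.e. the map $R[G_s]\to R_s$, $g\mapsto\xi_s^{g}$, is an $R[G_s]$-module isomorphism. Tensoring over $R$ and composing with the canonical isomorphism $R[G_1]\otimes_R R[G_2]\cong R[G_1\times G_2]=R[G]$ gives an $R[G]$-module isomorphism $R[G]\to R_1\otimes_R R_2$ sending $(g_1,g_2)$ to $\xi_1^{g_1}\otimes\xi_2^{g_2}$. Composing once more with $\mu$ produces an $R[G]$-module isomorphism $R[G]\to\mathcal{O}_{F_1F_2}[\tfrac1p]$ sending $(g_1,g_2)$ to $\xi_1^{g_1}\xi_2^{g_2}=(\xi_1\xi_2)^{(g_1,g_2)}$. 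Hence $\mathcal{O}_{F_1F_2}[\tfrac1p]$ is free of rank one over $R[G]$ with generator $\xi_1\xi_2$, so $F_1F_2$ has a normal $p$-integral basis over $L$.
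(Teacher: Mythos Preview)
Your argument is correct. The key step---identifying $\mathcal{O}_{F_1F_2}[\tfrac1p]$ with $R_1\otimes_R R_2$ via the \'etale-over-Dedekind machinery---is sound: finite \'etale over $R$ gives a finite projective (hence torsion-free) $R$-algebra, so the tensor product embeds in $F_1\otimes_L F_2\cong F_1F_2$, is a domain with fraction field $F_1F_2$, is regular of dimension at most one, hence Dedekind and therefore integrally closed; being finite over $R$ it must then coincide with the integral closure $\mathcal{O}_{F_1F_2}[\tfrac1p]$. The remainder is, as you say, a formal tensoring of the rank-one freeness statements under $R[G_1]\otimes_R R[G_2]\cong R[G]$.

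The paper itself does not give an argument here: it simply cites \cite{Kawamoto}, p.~227 (the same page already invoked for Lemma~\ref{lemma2}). Your write-up therefore supplies what the paper defers to a reference, and your observation that cyclicity plays no role in the gluing---only the Galois property and the disjointness $F_1\cap F_2=L$---is a genuine sharpening of the stated hypotheses.
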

\begin{proof}
See \cite{Kawamoto} p. 227.
\end{proof}

\begin{theorem}\label{Ko}
Let $K(\neq\mathbb{Q}(\sqrt{-1}),~\mathbb{Q}(\sqrt{-3}))$ be an
imaginary quadratic field, $p\geq5$ be a prime and $m\geq1$ be an
integer relatively prime to $p$. And, let $n$ and $\ell$ be positive
integers with $n\geq2\ell$. Then $K_{(p^nm)}$ has a normal
$p$-integral basis over $K_{(p^\ell m)}$.
\end{theorem}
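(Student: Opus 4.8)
The plan is to decompose the extension $K_{(p^nm)}/K_{(p^\ell m)}$ into a compositum of two cyclic subextensions that meet only in the base field, show each has a normal $p$-integral basis, and then invoke Lemma~\ref{help2}. Concretely, I would take $F_1=K_{(p^\ell m)}(\zeta_{p^n})$ and $F_2=K_{(p^\ell m)}\big(g(\theta)\big)$, where $g(\tau)=\prod_{s=0}^{p^{n-2\ell}-1}g_{(0,~\frac{1}{p^{n-\ell}m})\beta^s}^{12m}(\tau)$ is the function built from $\beta=\beta_0$ of Lemma~\ref{specific}. By Lemma~\ref{K_pn} we already know $K_{(p^nm)}=F_1F_2$, that both $F_1$ and $F_2$ are cyclic of degree $p^{n-\ell}$ over $K_{(p^\ell m)}$, and that $F_1\cap F_2=K_{(p^\ell m)}$ (the last point follows from the separate analyses of the $\alpha$- and $\beta$-actions in that proof, since $\alpha$ acts nontrivially on $\zeta_{p^n}$ but trivially on $g(\theta)$, while $\beta$ does the opposite up to the root of unity $\zeta_{p^{n-\ell}}$). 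Moreover the whole extension is unramified outside $p$, as recalled at the start of Section~\ref{section7}, so the same holds for $F_1/K_{(p^\ell m)}$ and $F_2/K_{(p^\ell m)}$.

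First I would handle $F_1=K_{(p^\ell m)}(\zeta_{p^n})$. Since $\zeta_{p^\ell}\in K_{(p^\ell m)}$ but $\zeta_{p^{\ell+1}}\notin K_{(p^\ell m)}$, this is a cyclotomic $p$-extension; cyclotomic extensions of $p$-power degree generated by $p$-power roots of unity are classically known to possess normal $p$-integral bases over their base (for instance one applies Lemma~\ref{help1}(i) over $K_{(p^\ell m)}(\zeta_{p^n})$ itself, or more directly uses that $\mathbb{Z}_p[\zeta_{p^n}]$ is a free module of rank one over the group ring, extended by base change; a Gauss-sum or Lagrange-resolvent argument also works). So $F_1$ has a normal $p$-integral basis over $K_{(p^\ell m)}$.

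Next, and this is the crux, I would show $F_2=K_{(p^\ell m)}\big(g(\theta)\big)$ has a normal $p$-integral basis over $K_{(p^\ell m)}$. Because $\zeta_{p^{n-\ell}}\notin K_{(p^\ell m)}$ in general, I would use Lemma~\ref{help1}(ii): it suffices to produce a normal $p$-integral basis for $F_2(\zeta_{p^{n-\ell}})$ over $K_{(p^\ell m)}(\zeta_{p^{n-\ell}})$. Over this larger base the $p^{n-\ell}$-th roots of unity are available, so by Lemma~\ref{help1}(i) I must exhibit a Kummer generator $\xi\in\mathcal{O}_{K_{(p^\ell m)}(\zeta_{p^{n-\ell}})}[\tfrac1p]^*$ with $F_2(\zeta_{p^{n-\ell}})=K_{(p^\ell m)}(\zeta_{p^{n-\ell}})(\sqrt[p^{n-\ell}]{\xi})$. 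The natural candidate is $\xi=g(\theta)$ (or a suitable power thereof adjusted by a root of unity): from the computation \eqref{beta} in the proof of Lemma~\ref{K_pn}, $g(\theta)^\beta=\zeta_{p^{n-\ell}}^{-6C}g(\theta)$ with $C\not\equiv0\pmod p$, which is exactly the Kummer-character relation showing $g(\theta)$ generates the cyclic $p^{n-\ell}$-extension. The point to verify is that $g(\theta)$ is a $p$-unit, i.e.\ lies in $\mathcal{O}[\tfrac1p]^*$: each factor $g_{(0,~\frac{1}{p^{n-\ell}m})\beta^s}(\theta)$ has level a power of $p$ (namely $p^{n-\ell}$), so by Proposition~\ref{transformation}(i),(ii) both $g_{r}(\tau)$ and $1/g_r(\tau)$ are integral over $\mathbb{Z}[\tfrac1p][j(\tau)]$, and $j(\theta)$ is an algebraic integer; hence $g(\theta)$ and its inverse are $p$-integral, so $g(\theta)\in\mathcal{O}[\tfrac1p]^*$. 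This gives the Kummer generator and completes the verification for $F_2$.

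Finally I assemble the pieces: $F_1$ and $F_2$ are cyclic over $K_{(p^\ell m)}$, unramified outside $p$, each with a normal $p$-integral basis over $K_{(p^\ell m)}$, and $F_1\cap F_2=K_{(p^\ell m)}$, so Lemma~\ref{help2} yields that $F_1F_2=K_{(p^nm)}$ has a normal $p$-integral basis over $K_{(p^\ell m)}$, which is the assertion. The main obstacle I anticipate is the bookkeeping in the Kummer-descent step for $F_2$: one must be careful that adjoining $\zeta_{p^{n-\ell}}$ does not collapse the degree (it does not, since $\zeta_{p^{n-\ell}}$ lives in a cyclotomic extension linearly disjoint from the $g(\theta)$-tower over $K_{(p^\ell m)}$, by the same $\alpha$/$\beta$ disjointness used above), and that the exponent of the root of unity appearing in $g(\theta)^\beta$ is genuinely prime to $p$ so that $g(\theta)$ is a Kummer generator of the full $p^{n-\ell}$-extension rather than a proper subextension — both of which are guaranteed by the explicit matrix computations already carried out in Lemma~\ref{specific} and \eqref{beta}.
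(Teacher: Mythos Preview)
Your overall architecture matches the paper's: split $K_{(p^nm)}$ as $F_1F_2$ with $F_1=K_{(p^\ell m)}(\zeta_{p^n})$ and $F_2=K_{(p^\ell m)}\big(g(\theta)\big)$, treat each via Lemma~\ref{help1}, and finish with Lemma~\ref{help2}. But the Kummer step for $F_2$ has a real gap.

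In Lemma~\ref{help1}(i) the element $\xi$ must lie in the \emph{base} field, here $L':=K_{(p^\ell m)}(\zeta_{p^{n-\ell}})$. Your candidate $\xi=g(\theta)$ does not: $g(\theta)$ generates the degree-$p^{n-\ell}$ extension over $L'$, so it cannot be in $L'$. The correct choice (and the one the paper makes) is $\xi=g(\theta)^{p^{n-\ell}}$, and one must then prove $\xi\in L'$. The relation \eqref{beta} gives $\beta$-invariance of $\xi$, but that alone is not enough: one has $\mathrm{Gal}\big(K_{(p^{2(n-\ell)}m)}/L'\big)=\langle\alpha^{p^{n-2\ell}}\rangle\times\langle\beta\rangle$, so you also need $\alpha^{p^{n-2\ell}}$-invariance, which is exactly the content of the computation \eqref{alpha} that you never invoke. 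Without it, $\xi$ is only known to sit in the $\beta$-fixed field, which is larger than $L'$.

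Two smaller points. First, the denominator of $(0,\tfrac{1}{p^{n-\ell}m})\beta^s$ is $p^{n-\ell}m$, not $p^{n-\ell}$; when $m>1$ this already has two prime factors, so Proposition~\ref{transformation}(ii) gives an honest unit, not merely a $p$-unit (your conclusion survives, but the reasoning as written is wrong for $m>1$). Second, your treatment of $F_1$ is a hand-wave; the paper handles it uniformly with the same Lemma~\ref{help1} mechanism, taking $\xi=\zeta_{p^{n-\ell}}$ over $K_{(p^\ell m)}(\zeta_{p^{n-\ell}})$ and then descending, with a separate check of the boundary case $n=2\ell$ (where $\zeta_{p^{n-\ell}}=\zeta_{p^\ell}\in K_{(p^\ell m)}$ and Lemma~\ref{help1}(ii) is not needed). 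You should make that case distinction explicit for both $F_1$ and $F_2$.
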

\begin{proof}
The extension $K_{(p^\ell m)}(\zeta_{p^n})/K_{(p^\ell m)}$ is cyclic
of degree $p^{n-\ell}$ and is unramified outside $p$. We consider
the extension $K_{(p^\ell m)}(\zeta_{p^n})/K_{(p^\ell
m)}(\zeta_{p^{n-\ell}})$. It is also a cyclic extension of degree
$p^\ell$ unramified outside $p$ by considering the action of
$\alpha$ on $\zeta_{p^n}$. Since $K_{(p^\ell
m)}(\zeta_{p^n})=K_{(p^\ell
m)}(\zeta_{p^{n-\ell}})(\sqrt[p^l]{\zeta_{p^{n-\ell}}})$,
$K_{(p^\ell m)}(\zeta_{p^n})$ has a normal $p$-integral basis over
$K_{(p^\ell m)}(\zeta_{p^{n-\ell}})$ by Lemma \ref{help1}(i). If
$n=2\ell$, then $\zeta_{p^{n-\ell}}=\zeta_{p^\ell}$ and $K_{(p^\ell
m)}=K_{(p^\ell m)}(\zeta_{p^{n-\ell}})$. If $n>2\ell$, then
$\zeta_{p^{n-\ell}}\not\in K_{(p^\ell m)}$ and hence $K_{(p^\ell
m)}(\zeta_{p^n})$ has a normal $p$-integral basis over $K_{(p^\ell
m)}$ by Lemma \ref{help1}(ii).
\par
Next we look into the cyclic extension $K_{(p^\ell
m)}\big(g(\theta)\big)/K_{(p^\ell m)}$ of degree $p^{n-\ell}$
unramified outside $p$. Let us examine the extension $K_{(p^\ell
m)}\big(\zeta_{p^{n-\ell}},~g(\theta)\big)/K_{(p^\ell
m)}(\zeta_{p^{n-\ell}})$. Then we see that it is also a cyclic
extension of degree $p^{n-\ell}$ unramified outside $p$ by
considering the action of $\beta$ on $g(\theta)$. Note that we can
rewrite it as $K_{(p^\ell
m)}\big(\zeta_{p^{n-\ell}},~g(\theta)\big)=K_{(p^\ell
m)}(\zeta_{p^{n-\ell}})(\sqrt[p^{n-\ell}]{g(\theta)^{p^{n-\ell}}})$.
On the other hand, we know by (\ref{K_N/H}), (\ref{first}) and
(\ref{second}) that
$\mathrm{Gal}\big(K_{(p^{2(n-\ell)}m)}/K_{(p^\ell
m)}(\zeta_{p^{n-\ell}})\big)=\langle\alpha^{p^{n-2\ell}}\rangle\times\langle\beta\rangle$.
So $g(\theta)^{p^{n-\ell}}$ belongs to $K_{(p^\ell
m)}(\zeta_{p^{n-\ell}})$ by (\ref{beta}) and (\ref{alpha}).
Moreover, it belongs to $\mathcal{O}_{K_{(p^\ell
m)}(\zeta_{p^{n-\ell}})}[\frac{1}{p}]^*$ by Proposition
\ref{transformation}(i) and (ii). Hence $K_{(p^\ell
m)}\big(\zeta_{p^{n-\ell}},~g(\theta)\big)$ has a normal
$p$-integral basis over $K_{(p^\ell m)}(\zeta_{p^{n-\ell}})$ by
Lemma \ref{help1}(i). If $n=2\ell$, then $K_{(p^\ell m)}=K_{(p^\ell
m)}(\zeta_{p^{n-\ell}})$. If $n>2\ell$, then
$\zeta_{p^{n-\ell}}\not\in K_{(p^\ell m)}$ and hence $K_{(p^\ell
m)}\big(g(\theta)\big)$ has a normal $p$-integral basis over
$K_{(p^\ell m)}$ by Lemma \ref{help1}(ii).
\par
Therefore the theorem follows from Lemma \ref{help2} because
$K_{(p^\ell m)}(\zeta_{p^n})\cap K_{(p^\ell
m)}\big(g(\theta)\big)=K_{(p^\ell m)}$.
\end{proof}

\begin{corollary}
Let $K(\neq\mathbb{Q}(\sqrt{-1}),~\mathbb{Q}(\sqrt{-3}))$ be an
imaginary quadratic field, $p\geq5$ be a prime and $m\geq1$ be an
integer relatively prime to $p$. Let $K_\infty$ be any
$\mathbb{Z}_p$-extension of $K$. Then the $\mathbb{Z}_p$-extension
$K_\infty K_{(p^\ell m)}$ has a normal basis over $K_{(p^\ell m)}$
for $\ell\geq1$.
\end{corollary}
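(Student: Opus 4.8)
The plan is to reduce the corollary to Theorem \ref{Ko} by placing every finite layer of $K_\infty K_{(p^\ell m)}/K_{(p^\ell m)}$ inside a ray class field $K_{(p^Nm)}$ with $N\ge 2\ell$, and then descending a normal $p$-integral basis from $K_{(p^Nm)}$ to that layer. Write $L=K_{(p^\ell m)}$ and $\mathcal{R}=\mathcal{O}_L[\tfrac{1}{p}]$. First I would observe that $K_\infty\cap L$ is a finite layer of the $\mathbb{Z}_p$-extension $K_\infty/K$, so $K_\infty L/L$ is again a $\mathbb{Z}_p$-extension; denote its layers by $L=M_0\subset M_1\subset M_2\subset\cdots$, so each $M_n/L$ is cyclic of degree $p^n$ and unramified outside $p$ (\cite{Washington} Proposition 13.2). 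By the definition recalled above, it suffices to prove that every $M_n$ has a normal $p$-integral basis over $L$.

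Next I would establish the containment. The field $K_\infty$ is the union of its finite subextensions $K_{\infty,j}/K$, each of which is abelian and unramified outside $p$; since $K$ is imaginary quadratic and so has no archimedean place that can ramify, each $K_{\infty,j}/K$ has conductor a power of $p$ and therefore lies in some $K_{(p^{c_j})}$. Hence $K_\infty\subseteq\bigcup_{j\ge1}K_{(p^j)}$. As $M_n$ is a finite extension of $K$ contained in $K_\infty L$, we obtain $M_n\subseteq K_{(p^J)}\cdot K_{(p^\ell m)}=K_{(p^Nm)}$ for some $J$, with $N=\max(J,\ell)$, using $K_{(a)}K_{(b)}=K_{(\mathrm{lcm}(a,b))}$ and $\gcd(p,m)=1$; enlarging $N$ we may assume $N\ge 2\ell$. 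By Theorem \ref{Ko}, $K_{(p^Nm)}$ has a normal $p$-integral basis over $L$, say with free generator $\omega$ of $\mathcal{O}_{K_{(p^Nm)}}[\tfrac{1}{p}]$ over $\mathcal{R}[G]$, where $G=\mathrm{Gal}(K_{(p^Nm)}/L)$.

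The remaining step, which I expect to be the only non-formal point, is to descend this basis to $M_n$. Put $H=\mathrm{Gal}(K_{(p^Nm)}/M_n)$, $\overline G=G/H=\mathrm{Gal}(M_n/L)$, and $\mu=\mathrm{Tr}_{K_{(p^Nm)}/M_n}(\omega)=\sum_{h\in H}h(\omega)\in\mathcal{O}_{M_n}[\tfrac{1}{p}]$. I would argue that $\mu$ is a free generator. Since $K_{(p^Nm)}/M_n$ is unramified outside $p$, its different is a unit away from $p$, so the trace map $\mathcal{O}_{K_{(p^Nm)}}[\tfrac{1}{p}]\to\mathcal{O}_{M_n}[\tfrac{1}{p}]$ is surjective; and as $G$ is abelian one has $\mathrm{Tr}_{K_{(p^Nm)}/M_n}(g\omega)=g(\mu)$ for $g\in G$, with $g(\mu)$ depending only on $\overline g\in\overline G$. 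Hence
\[
\mathcal{O}_{M_n}[\tfrac{1}{p}]=\mathrm{Tr}_{K_{(p^Nm)}/M_n}\!\big(\mathcal{R}[G]\,\omega\big)=\mathcal{R}[\overline G]\,\mu .
\]
Thus $\mu$ generates $\mathcal{O}_{M_n}[\tfrac{1}{p}]$ over $\mathcal{R}[\overline G]$, and the resulting surjection $\mathcal{R}[\overline G]\twoheadrightarrow\mathcal{O}_{M_n}[\tfrac{1}{p}]$ is an isomorphism: both sides are torsion-free $\mathcal{R}$-modules of $\mathcal{R}$-rank $|\overline G|=[M_n:L]$ (the normal basis theorem over $L$ giving $M_n\cong L[\overline G]$ after tensoring with $\mathrm{Frac}(\mathcal{R})$), so the kernel is an $\mathcal{R}$-torsion submodule of the free $\mathcal{R}$-module $\mathcal{R}[\overline G]$, hence zero. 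Therefore $\mathcal{O}_{M_n}[\tfrac{1}{p}]$ is free of rank one over $\mathcal{R}[\overline G]$, i.e. $M_n$ has a normal $p$-integral basis over $L$; since this holds for all $n$, the $\mathbb{Z}_p$-extension $K_\infty K_{(p^\ell m)}$ has a normal basis over $K_{(p^\ell m)}$.

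The hard part is thus the descent in the third paragraph, and the two things to be careful about there are: (i) that pushing $\omega$ down by the trace still yields a \emph{generator} of $\mathcal{O}_{M_n}[\tfrac{1}{p}]$ over $\mathcal{R}[\overline G]$ --- which is exactly where surjectivity of the trace for extensions unramified outside $p$ enters --- and (ii) that ``generated by one element'' upgrades to ``free of rank one'', which I would deduce from the $\mathcal{R}$-rank count above rather than from projectivity. Both inputs are standard and consonant with the machinery of \cite{Greither} already used for Lemmas \ref{help1} and \ref{help2}. A minor preliminary to dispatch first is the containment $K_\infty\subseteq\bigcup_jK_{(p^j)}$, which relies only on $K_\infty/K$ being abelian, unramified outside $p$, and on $K$ having no real place.
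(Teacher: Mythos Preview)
Your proposal is correct and follows essentially the same route as the paper: the paper's proof is simply ``a direct consequence of Theorem \ref{Ko} by the well-known fact that if an extension $F/L$ of number fields has a normal $p$-integral basis over $L$, then so does $F'/L$ for each intermediate field $F'$,'' and you have carefully unpacked both the containment $M_n\subseteq K_{(p^Nm)}$ and that well-known descent fact (via surjectivity of the trace away from $p$ and an $\mathcal{R}$-rank comparison). Your write-up is more detailed than the paper's one-line proof, but the strategy is identical.
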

\begin{proof}
It is a direct consequence of Theorem \ref{Ko} by the well-known
fact that if an extension $F/L$ of number fields has a normal
$p$-integral basis over $L$, then so does $F'/L$ for each
intermediate field $F'$.
\end{proof}

\bibliographystyle{amsplain}

\begin{thebibliography}{10}

\bibitem{Chan} S-P. Chan, \emph{Modular functions, elliptic functions and Galois module
structure}, J. reine angew. Math. 375/376 (1987), 67-82

\bibitem{C-Y} I. Chen and N. Yui, \emph{Singular values of Thompson
series},  Groups, difference sets, and the Monster (Columbus, OH,
1993), 255-326, Ohio State Univ. Math. Res. Inst. Publ., 4, Walter
de Gruyter, Berlin, 1996.

\bibitem{C-K} B. Cho and J. K. Koo, \emph{Construction of class
fields over imaginary quadratic fields and applications}, Quart. J.
Math., doi:10.1093/qmath/han035.

\bibitem{Cohen} H. Cohen, \emph{Advanced Topics in Computational
Number Theory}, Springer-Verlag, New York, 2000.

\bibitem{Cox} D. A. Cox, \emph{Primes of the form $x^2+ny^2$: Fermat, Class Field, and Complex Multiplication}, John Wiley \& Sons, Inc., 1989.

\bibitem{C-M-S} D. A. Cox, J. McKay and P. Stevenhagen,
\emph{Principal moduli and class fields}, Bull. London Math. Soc. 36
(2004), no. 1, 3-12.

\bibitem{F-N} V. Fleckinger and T. Nguyen-Quang-Do, \emph{Bases normales unites et conjecture faible de
Leopoldt}, Manuscr. Math. 71 (1991), no. 2, 183-195.

\bibitem{F-K} T. Fukuda and K. Komatsu, \emph{Normal bases and
$\lambda$-invariants of number fields}, Proc. Japan Acad. Ser. A
Math. Sci. 67 (1991), no. 7, 243-245.

\bibitem{Gee} A. Gee, \emph{Class invariants by Shimura's reciprocity law}, J. Theor. Nombres Bordeaux
11 (1999), no. 1, 45-72.

\bibitem{G-S} A. Gee and P. Stevenhagen, \emph{Generating class fields using Shimura reciprocity},
Algorithmic number theory (Portland, OR, 1998), 441-453, Lecture
Notes in Comput. Sci., 1423, Springer, Berlin, 1998.

\bibitem{Greenberg} R. Greenberg, \emph{On the Iwasawa invariants
of totally real number fields}, American J. Math. 98 (1976),
263-284.

\bibitem{Greither} C. Greither, \emph{Cyclic Galois Extensions of Commutative Rings}, Lecture Notes in
Mathematics, 1534, Springer-Verlag, 1992.

\bibitem {Janusz} G. J. Janusz, \textit{Algebraic Number Fields},
Academic Press, 1973.

\bibitem{J-K-S} H. Y. Jung, J. K. Koo and D. H. Shin,
\emph{Normal bases of ray class fields over imaginary quadratic
fields}, submitted.

\bibitem{J-K-S1} H. Y. Jung, J. K. Koo and D. H. Shin, \emph{Ray class invariants over imaginary quadratic fields},
submitted.

\bibitem{K-Y} E. Kaltofen and N. Yui, \emph{Explicit construction of the
Hilbert class fields of imaginary quadratic fields by integer
lattice reduction}, Number theory, 149-202, Springer, New York,
1991.

\bibitem{Kawamoto} F. Kawamoto, \emph{On normal integeral
bases}, Tokyo J. Math. 7 (1984), 221-231.

\bibitem{K-M} I. Kersten and J. Michali\v{c}ek, \emph{On Vandiver's
conjecture and $\mathbb{Z}_p$-extensions of $\mathbb{Q}(\zeta_{p\sp
n})$}, J. Number Theory 32 (1989), no. 3, 371-386.

\bibitem{Komatsu} K. Komatsu, \emph{Normal basis and Greenberg's
conjecture}, Math. Ann. 300 (1994), no. 1, 157-163.

\bibitem {K-S} J. K. Koo and D. H. Shin, \textit{On some arithmetic properties of Siegel functions},
Math. Zeit., 264 (2010) 137-177.

\bibitem{K-L} D. Kubert and S. Lang, \emph{Modular Units}, Grundlehren der mathematischen Wissenschaften 244, Spinger-Verlag, 1981.

\bibitem{Lang2} S. Lang, \emph{Algebraic Number Theory, 2nd Edition}, Springer, 1994.

\bibitem{Lang} S. Lang, \emph{Elliptic Functions, 2nd edition}, Spinger-Verlag, 1987.

\bibitem{Morain} F. Morain, \emph{Implementation of the
Atkin-Golwasser-Kilian primality testing algorithm}, draft, 1988.

\bibitem{Okada} T. Okada, \emph{Normal bases of class fields over
Gauss' number field}, J. London Math. Soc. (2) 22 (1980), no. 2,
221-225.

\bibitem {Schertz2} R. Schertz, \textit{Construction of ray class fields by elliptic
units}, J. Theor. Nombres Bordeaux 9 (1997), no. 2, 383-394.

\bibitem{Schertz} R. Schertz, \emph{Galoismodulstruktur und elliptische Funktionen}, J. Number Theory 39 (1991), no. 3, 285-326.

\bibitem{Shimura} G. Shimura, \emph{Introduction to the Arithmetic Theory of Automorphic Functions}, Iwanami Shoten and Princeton
University Press, 1971.

\bibitem{Silverman} J. H. Silverman, \emph{Advanced Topics in the Arithmetic of Elliptic Curves},
Springer-verlag, 1994.

\bibitem{Stevenhagen} P. Stevenhagen, \emph{Hilbert's 12th problem, complex multiplication and Shimura
reciprocity}, Class Field Theory-Its Centenary and Prospect (Tokyo,
1998), 161-176, Adv. Stud. Pure Math., 30, Math. Soc. Japan, Tokyo,
2001.

\bibitem{Taylor} M. J. Taylor, \emph{Relative Galois module structure of rings of integers and
elliptic functions. II}, Ann. of Math. (2) 121 (1985), no. 3,
519-535.

\bibitem{Washington} L. C. Washington, \emph{Introduction to Cyclotomic Fields, 2nd edition}, Spinger, 1996.

\bibitem {Waerden} B. L. van der Waerden, \textit{Algebra, Vol I},
Springer-Verlag, 2003.

\end{thebibliography}

\end{document}